\documentclass[a4paper,12pt]{amsart}
\usepackage[T1]{fontenc}
\usepackage[english]{babel}
\usepackage[left=1in,right=1in,top=1.1in,bottom=1.1in]{geometry}

\allowdisplaybreaks

\usepackage{mathtools,amssymb,mathrsfs,amscd,comment}
\usepackage{times}
\usepackage[nopatch=footnote]{microtype}
\usepackage{enumitem}
\usepackage{tikz-cd}

\usepackage[dvipsnames]{xcolor}

\usepackage{thmtools}   

\usepackage[pagebackref,linktocpage]{hyperref}

\hypersetup{
  hypertexnames=false,
  pdftitle={Degree growth of rational and meromorphic self-maps},
  pdfauthor={Fei Hu},
  colorlinks=true,
  linkcolor=red,
  citecolor=MidnightBlue,
  urlcolor=MidnightBlue
}
\usepackage[nameinlink,noabbrev]{cleveref}

\newtheorem{theorem}{Theorem}[section]
\crefname{theorem}{Theorem}{Theorems}
\newtheorem{lemma}[theorem]{Lemma}
\crefname{lemma}{Lemma}{Lemmas}
\newtheorem{proposition}[theorem]{Proposition}
\crefname{proposition}{Proposition}{Propositions}

\crefname{prop}{Proposition}{Propositions}
\newtheorem{corollary}[theorem]{Corollary}
\crefname{corollary}{Corollary}{Corollaries}

\crefname{cor}{Corollary}{Corollaries}

\crefname{conjecture}{Conjecture}{Conjectures}

\crefname{conj}{Conjecture}{Conjectures}
\newtheorem*{conj*}{Conjecture}
\crefname{conj}{Conjecture}{Conjectures}

\crefname{thmx}{Theorem}{Theorems}

\theoremstyle{definition}
\newtheorem{definition}[theorem]{Definition}
\crefname{definition}{Definition}{Definitions}

\crefname{defn}{Definition}{Definitions}

\crefname{example}{Example}{Examples}

\crefname{notation}{Notation}{Notation}
\newtheorem*{notation*}{Notation}
\crefname{notation}{Notation}{Notation}
\newtheorem*{convention*}{Convention}
\crefname{convention}{Convention}{Convention}

\crefname{problem}{Problem}{Problems}
\newtheorem{question}[theorem]{Question}
\crefname{question}{Question}{Questions}

\crefname{condition}{Condition}{Conditions}

\crefname{assumption}{Assumption}{Assumptions}
\newtheorem{settinginner}{Setting}
\newenvironment{setting}[1]
  {\renewcommand{\thesettinginner}{(#1)}\begin{settinginner}}
  {\end{settinginner}}
\crefname{settinginner}{Setting}{Settings}
\Crefname{settinginner}{Setting}{Settings}

\theoremstyle{remark}

\crefname{rmk}{Remark}{Remarks}
\newtheorem*{rmk*}{Remark}
\crefname{rmk}{Remark}{Remarks}
\newtheorem{remark}[theorem]{Remark}
\crefname{remark}{Remark}{Remarks}

\crefname{fact}{Fact}{Facts}
\newtheorem{claim}[theorem]{Claim}
\crefname{claim}{Claim}{Claims}
\newtheorem*{claim*}{Claim}
\crefname{claim}{Claim}{Claims}

\crefname{step}{Step}{Steps}

\crefname{case}{Case}{Cases}

\numberwithin{equation}{section}

\DeclareMathOperator{\Char}{char}

\DeclareMathOperator{\id}{id}

\DeclareMathOperator{\Mat}{M}

\DeclareMathOperator{\Nef}{Nef}

\DeclareMathOperator{\pr}{pr}

\DeclareMathOperator{\Psef}{Psef}

\DeclareMathOperator{\tr}{tr}

\DeclareMathOperator{\vol}{vol}

\newcommand{\ddc}{\mathrm{d}\mathrm{d}^c}
\newcommand{\diff}{\mathop{}\!\mathrm{d}}

\newcommand{\bbP}{\mathbb{P}}

\newcommand{\bC}{\mathbf{C}}

\newcommand{\bk}{\mathbf{k}}

\newcommand{\bQ}{\mathbf{Q}}
\newcommand{\bR}{\mathbf{R}}

\newcommand{\bZ}{\mathbf{Z}}

\newcommand{\cC}{\mathcal{C}}
\newcommand{\cD}{\mathcal{D}}
\newcommand{\cE}{\mathcal{E}}

\newcommand{\cH}{\mathcal{H}}

\newcommand{\cK}{\mathcal{K}}

\newcommand{\cN}{\mathcal{N}}
\newcommand{\cO}{\mathcal{O}}
\newcommand{\cP}{\mathcal{P}}
\newcommand{\cR}{\mathcal{R}}

\newcommand{\cX}{\mathcal{X}}

\newcommand{\sB}{\mathscr{B}}

\newcommand{\sO}{\mathscr{O}}

\newcommand{\Frob}{\mathrm{F}}

\newcommand{\N}{\mathsf{N}}

\newcommand{\T}{\mathsf{T}}

\newcommand{\arxiv}[1]{\href{https://arxiv.org/abs/#1}{\texttt{arXiv:#1}}}

\providecommand{\MR}[1]{}
\renewcommand{\MR}[1]{\href{https://mathscinet.ams.org/mathscinet-getitem?mr=#1}{MR#1}}

\title[Degree growth of rational and meromorphic self-maps]{Degree growth of rational and meromorphic self-maps}

\author{Fei Hu}
\address{School of Mathematics, Nanjing University, Nanjing, China}
\email{fhu@nju.edu.cn}
\date{\today}

\subjclass[2020]{14E05, 37F80, 14C17, 32H50}
\keywords{Degree growth, dynamical degree, dominant rational map, meromorphic map, compact K\"ahler manifold, intermediate growth, finite-time orbit space, Hodge--Lorentz package, Ptolemy inequality}
\thanks{The author was supported by NSFC Grant \#12371045.}

\begin{document}

\begin{abstract}
Let $f$ be a dominant rational self-map of a normal projective variety over an algebraically closed field of arbitrary characteristic, or a dominant meromorphic self-map of a compact K\"ahler manifold.
We prove that in every codimension, the degree sequence of $f$ differs from its dynamical-degree growth by at most a polynomial factor, with an explicit universal bound on the polynomial exponent.
In particular, subexponential but superpolynomial degree growth cannot occur.
At a strict dynamical-degree break, we prove the sharper asymptotic
\[
\deg_{i,\omega}(f^n)=C_{f,\omega,i}\,\lambda_i(f)^n+O(\eta^n),
\]
for every $\sqrt{\lambda_{i-1}(f)\lambda_{i+1}(f)}<\eta<\lambda_i(f)$.
This extends the degree-asymptotic theorem of Dang--Favre from codimension one in characteristic zero to arbitrary codimension and arbitrary characteristic, and recovers the surface theorem of Boucksom--Favre--Jonsson.
Both results rely on a uniform estimate for the temporal discrete logarithmic Laplacian of the degree sequences, obtained by applying the Ptolemy inequality to ordered Hodge--Lorentz packages on varying finite-time orbit spaces.
\end{abstract}

\maketitle


\section{Introduction}

In algebraic and complex dynamics, dynamical degrees govern the exponential rates of degree growth, but they do not by themselves determine the finer behavior of the normalized degree sequences obtained by factoring out the dominant exponential term.
For dominant rational and meromorphic maps $f$, the fundamental obstruction is that pullback need not be functorial under iteration:
\[
(f^n)^*\neq (f^*)^n.
\]
Exceptional subvarieties may enter the indeterminacy loci of later iterates and cause degree drops.
Algebraic stability was introduced by Forn{\ae}ss--Sibony \cite{FS95} to describe this phenomenon; see also \cite[D\'efinition~1.4.4]{Sibony99}.
This brings essential difficulties in studying the dynamics of rational and meromorphic maps.
It has therefore remained open whether the growth of the normalized degree sequence can be superpolynomial yet subexponential; see, e.g., \cite[Question~9.5]{HP07}, \cite[Conjecture~2]{Silverman14}, \cite[Question~3.2]{Cantat18-ICM}, and \cite[Question~11]{Silverman26}.
We prove a uniform polynomial upper bound for the normalized degree sequence in every dimension and codimension, and at a strict dynamical-degree break we obtain convergence to a positive limit with an exponential error term.

We work throughout in either of the following settings.

\begin{setting}{P}\label{setting:projective}
Let $X$ be a normal projective variety of dimension $d\geq2$ over an algebraically closed field $\bk$ of arbitrary characteristic, let $f\colon X\dashrightarrow X$ be a dominant rational self-map, and let $\omega$ be a nef and big $\bR$-Cartier divisor class on $X$.
\end{setting}

\begin{setting}{K}\label{setting:kahler}
Let $X$ be a compact K\"ahler manifold of dimension $d\geq2$, let $f\colon X\dashrightarrow X$ be a dominant meromorphic self-map, and let $\omega$ be a K\"ahler class on $X$.
\end{setting}

For $n\geq1$, let $\Gamma_n\subseteq X\times X$ be the graph closure of $f^n$.
Let $\widetilde\Gamma_n\to\Gamma_n$ be its normalization in \cref{setting:projective} and a compact K\"ahler resolution in \cref{setting:kahler}, and write $p_n,q_n\colon\widetilde\Gamma_n\to X$ for the induced projections.
For $0\leq i\leq d$, define
\begin{equation}\label{eq:intro-degree}
\deg_{i,\omega}(f^n)\coloneqq\bigl((p_n^*\omega)^{d-i}\cdot(q_n^*\omega)^i\bigr)_{\widetilde\Gamma_n},
\end{equation}
where the right-hand side is an intersection number in \cref{setting:projective} and an integral in \cref{setting:kahler}.
In the latter setting, \eqref{eq:intro-degree} is independent of the chosen resolution.
We set $\deg_{i,\omega}(f^0)\coloneqq\vol_X(\omega)=(\omega^d)_X$, the volume of the nef class $\omega$.

The quasi-submultiplicativity of degree sequences goes back to Russakovskii--Shiffman \cite{RS97} for rational maps of projective space and to Dinh--Sibony \cite{DS05a,DS04-ENS} in \cref{setting:kahler}.
Truong \cite{Truong20} and Dang \cite{Dang20} established the algebraic counterpart in \cref{setting:projective}; see also the sharper estimate of Jiang--Li \cite{JL23}.
Precisely, for each $0\leq i\leq d$, there is a constant $C_{X,\omega,i}>0$, independent of $f$, such that for all $m,n\geq1$,
\[
\deg_{i,\omega}(f^{m+n})\leq C_{X,\omega,i}\deg_{i,\omega}(f^m)\deg_{i,\omega}(f^n).
\]
So $(C_{X,\omega,i}\deg_{i,\omega}(f^n))_{n\geq1}$ is submultiplicative.
Fekete's lemma thus yields that the limit
\begin{equation}\label{eq:intro-dynamical-degree}
\lambda_i(f)\coloneqq\lim_{n\to\infty}\deg_{i,\omega}(f^n)^{1/n}=\inf_{n\geq1}\bigl(C_{X,\omega,i}\deg_{i,\omega}(f^n)\bigr)^{1/n}
\end{equation}
exists.
The quantity $\lambda_i(f)$ is called the \emph{$i$-th dynamical degree} of $f$.\footnote{In \cref{setting:kahler}, the definition \eqref{eq:intro-dynamical-degree} agrees with norm growth on $H^{i,i}$; in \cref{setting:projective}, equivalence with the $\ell$-adic cohomological definition for endomorphisms of smooth projective varieties was recently established by Xie \cite{Xie-GWRH}; cf.~\cite{Tate-WoodsHole,ES13,Truong25,Hu19,Hu24,HT26}.}
It is independent of the choice of $\omega$ and is birationally/bimeromorphically invariant.
Also, by the Khovanskii--Teissier inequality, the dynamical degrees are log-concave:
\begin{equation}\label{eq:dyn-degree-log-concave}
\lambda_i(f)^2\geq\lambda_{i-1}(f)\lambda_{i+1}(f),\qquad 1\leq i\leq d-1.
\end{equation}
From \eqref{eq:intro-dynamical-degree}, we readily obtain a uniform exponential lower bound
\begin{equation}\label{eq:standard-exponential-lower-bound}
\deg_{i,\omega}(f^n)\geq C_{X,\omega,i}^{-1}\, \lambda_i(f)^n,\qquad n\geq1.
\end{equation}

Beyond measuring degree growth, dynamical degrees determine topological entropy for holomorphic maps and bound it for rational and meromorphic maps \cite{Gromov03,Yomdin87,DS05a,DS04-ENS}, play a central role in the construction of invariant Green currents and equilibrium measures \cite{DS05-Green,Guedj05,DS10b,DDG10,DDG11}, and control the growth and equidistribution of periodic points \cite{Xie15b,DNT15,DNT17,DNV18}.
In arithmetic dynamics, they bound arithmetic degrees, enter canonical-height constructions, and provide the geometric side of the Kawaguchi--Silverman conjecture \cite{Silverman91,Kawaguchi08,KS16b,MSS18a,Matsuzawa20a,MZ22,MMSZ23,JSXZ24,MW24,HZ25,MX25}.
See the surveys \cite{DS17,BIJMST19,Silverman26} and the abundant references therein.

For a finite real sequence $a=(a_0,\ldots,a_m)$, define its \emph{discrete Laplacian} by
\[
(\Delta a)_i\coloneqq 2a_i-a_{i-1}-a_{i+1},\qquad
1\leq i\leq m-1.
\]
An index interval $[r,s]$, with $0\leq r<s\leq m$, is \emph{affine} for $a$ if $(\Delta a)_j=0$ for every integer $r<j<s$.
It is \emph{maximal affine} if it is not properly contained in any larger affine index interval.
Intervals of length one are allowed.
Applied to the concave sequence $(\log\lambda_i(f))_i$, the maximal affine intervals cover $[0,d]$ and meet only at endpoints.

Our first main theorem gives a pointwise polynomial upper bound on each maximal affine interval.

\begin{theorem}\label{thm:affine-interval-growth}
Assume that either \cref{setting:projective} or \cref{setting:kahler} holds.
Then for every maximal affine interval $[r,s]$ of $(\log\lambda_i(f))_i$, there is a constant $C_{f,\omega,r,s}>1$ such that for every $r\leq i\leq s$,
\begin{equation}\label{eq:affine-interval-growth}
\deg_{i,\omega}(f^n)\leq C_{f,\omega,r,s}\,n^{2(i-r)(s-i)}\lambda_i(f)^n,\qquad n\geq1.
\end{equation}
Consequently, there is a constant $C_{f,\omega}>1$ such that for every $0\leq i\leq d$,
\begin{equation}\label{eq:global-polynomial-growth}
\deg_{i,\omega}(f^n)\leq C_{f,\omega}\,n^{2i(d-i)}\lambda_i(f)^n,\qquad n\geq1.
\end{equation}
Together with \eqref{eq:standard-exponential-lower-bound}, this implies that for every $0\leq i\leq d$,
\begin{equation}\label{eq:global-log-growth}
\log\deg_{i,\omega}(f^n)=n\log\lambda_i(f)+O(\log n).
\end{equation}
\end{theorem}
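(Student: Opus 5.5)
Write $d_i(n)\coloneqq\deg_{i,\omega}(f^n)$ and $\ell_i(n)\coloneqq\log d_i(n)-n\log\lambda_i(f)$. The abstract points to the mechanism: a uniform bound on the temporal discrete logarithmic Laplacian, combined with log-concavity in $i$ (Khovanskii--Teissier). The plan has four steps: establish two inequalities for the degree array $(d_i(n))_{i,n}$, then run a discrete maximum-principle argument on each maximal affine interval $[r,s]$.

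\textbf{Step 1 (spatial log-concavity).} For fixed $n$, the Khovanskii--Teissier inequality on $\widetilde\Gamma_n$ gives
\[
d_i(n)^2\geq d_{i-1}(n)\,d_{i+1}(n).
\]
Hence $i\mapsto\log d_i(n)$ is concave. Since $\log\lambda_i$ is affine on $[r,s]$, $i\mapsto\ell_i(n)$ is also concave on $[r,s]$.

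\textbf{Step 2 (temporal Laplacian estimate).} This is the key input. For $m,n\geq1$ one considers the finite-time orbit space: the closure of $\{(x,f^m x,f^{m+n}x)\}$, resolved, with its three projections. Applying the Hodge index/Ptolemy inequality to the ordered triple of pulled-back classes should give
\[
d_i(m)\,d_i(n)\;\gtrsim\;\text{mixed terms},
\]
together with a reverse inequality. The concrete target is a bound of the form
\[
d_i(n)^2\leq C\,d_{i-1}(n-1)\,d_{i+1}(n+1)\quad\text{or}\quad
d_i(n)^2\leq C\,d_{i+1}(n-1)\,d_{i-1}(n+1),
\]
or an averaged version. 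Since $\log\lambda_{i-1}+\log\lambda_{i+1}=2\log\lambda_i$ on the interior of $[r,s]$, the exponential terms cancel. What remains, in normalized form, is a mixed Laplacian bound
\[
2\ell_i(n)\leq\ell_{i-1}(n\mp1)+\ell_{i+1}(n\pm1)+C.
\]
Combined with Step 1, this says the array $(\ell_i(n))$ is close to discretely subharmonic-reversed in the $(i,n)$-plane.

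I expect Step 2 to be the main obstacle. Making the Ptolemy inequality on varying orbit spaces produce constants independent of $n$ requires comparing pullbacks across different $\widetilde\Gamma$'s using quasi-submultiplicativity. I would try the ordered package
\[
\bigl(p^*\omega,\;(f^m)^*\omega,\;(f^{m+n})^*\omega\bigr)
\]
first.

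\textbf{Step 3 (boundary behaviour).} At the endpoints of a maximal affine interval we have strict breaks: $\lambda_r^2>\lambda_{r-1}\lambda_{r+1}$, or $r=0$, where $d_0(n)$ is the constant $\vol(\omega)$. At a strict break the exponent gap gives exponential slack, and I would use this to show $\ell_r(n),\ell_s(n)=O(1)$. Together with the lower bound \eqref{eq:standard-exponential-lower-bound}, this says $\ell_i(n)\geq -C$ for every $i$.

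\textbf{Step 4 (polynomial bound by induction).} I would prove $\ell_i(n)\leq 2(i-r)(s-i)\log n+C$ by a barrier argument. The comparison function
\[
\phi(i,n)=2(i-r)(s-i)\log n
\]
satisfies $\Delta_i\phi=4\log n$, which absorbs the temporal error $C$ over $\log$-scale time windows: summing the Step 2 inequality over dyadic blocks $n\sim2^k$ converts a bounded Laplacian defect into growth linear in $\log n$, with coefficient $2$ per unit of curvature in $i$. A maximum principle on $[r,s]\times[1,N]$, with boundary values controlled by Step 3, then yields \eqref{eq:affine-interval-growth}. Finally, \eqref{eq:global-polynomial-growth} follows because $(i-r)(s-i)\leq i(d-i)$, and \eqref{eq:global-log-growth} follows by combining with \eqref{eq:standard-exponential-lower-bound}.
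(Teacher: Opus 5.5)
There is a genuine gap, and it is in your Step~2, on which everything else rests. The target inequality $d_i(n)^2\le C\,d_{i-1}(n\mp1)\,d_{i+1}(n\pm1)$ is false in every variant you list. Take $X=E^2$ with the product polarization, and let $f$ be the isogeny induced by $M=\begin{pmatrix}1&1\\0&1\end{pmatrix}$. By \cref{claim:degrees-matrix-norm}, $d_0(n)=d_2(n)=2$ and $d_1(n)=\|M^n\|_{\Frob}^2=n^2+2$. All $\lambda_i(f)=1$, and $[0,2]$ is the maximal affine interval. Your inequality would then give $(n^2+2)^2\le 4C$.

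More structurally, no bounded-defect inequality $2\ell_i(n)\le\ell_{i-1}(n\mp1)+\ell_{i+1}(n\pm1)+C$ can hold in general. It makes $\ell_i(n)-\tfrac C2(i-r)(s-i)$ convex along the lines $i\mapsto(i,n_0\pm i)$, each of which meets both $i=r$ and $i=s$. With bounded endpoint data this forces $\ell_i(n)=O(1)$ on the whole interval. That contradicts the example above, where $\ell_1(n)\sim2\log n$ exactly saturates $2(i-r)(s-i)\log n$. It also leaves your $\log n$ barrier in Step~4 unexplained.

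What actually holds is the same-index temporal estimate of \cref{thm:degree-log-laplacian}: for $r<i<s$,
\[
\Bigl|\log\frac{d_i(n)^2}{d_i(n-1)\,d_i(n+1)}\Bigr|\le C\,\frac{\sqrt{d_{i-1}(n)\,d_{i+1}(n)}}{d_i(n)},
\qquad\text{i.e.}\qquad
|\Delta_{\mathrm{tm}}\ell_i(n)|\le C\exp\bigl(-\tfrac12\Delta_{\mathrm{sp}}\ell_i(n)\bigr).
\]
This is a nonlinear coupling, not a bounded defect. The paper proves it by applying the four-point Ptolemy inequality to $\eta_0,\eta_1,\eta_n,\eta_{n+1}$ on the single orbit space $Y_{n+1}$. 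It bounds the mixed log-derivative of $\Phi_i(s,t)=Q(\eta_s^{[d-i]},\eta_{n+t}^{[i]})$, whose corner values $d_i(n)$, $\lambda_d(f)\,d_i(n-1)$, $d_i(n+1)$, $\lambda_d(f)\,d_i(n)$ come from the two-time shift identity. Uniformity in $n$ comes from the adjacent comparison $K^{-1}\omega_{m,N}\preceq\omega_{m+1,N}\preceq K\omega_{m,N}$ pulled back from $\widetilde\Gamma_1$, not from quasi-submultiplicativity.

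Steps~3 and~4 depend on that same estimate. In Step~3, ``exponential slack'' does not by itself bound $\ell_r(n)$ from above, since quasi-submultiplicativity is compatible with $d_r(n)\asymp n^k\lambda_r(f)^n$. The paper first proves \cref{thm:strict-break-asymptotics}. At a strict break the right-hand side above is $O(q^n)$ with $q<1$, so the log-slopes $\log(d_r(n)/d_r(n-1))$ are exponentially Cauchy and $d_r(n)/\lambda_r(f)^n$ converges to a positive limit.

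In Step~4 the mechanism is not absorption of a bounded error over dyadic blocks. Set $u_{i,n}=\ell_i(n)$, $b_i=2(i-r)(s-i)$ and $v_{i,n}=C_2+b_i\log(C_3n)$. Suppose $w=u-v-\epsilon n$ has a positive maximum at an interior point $(i,n)$. Then $\Delta_{\mathrm{sp}}u\ge\Delta_{\mathrm{sp}}v=4\log(C_3n)$, hence
\[
\frac{b_i}{n^2}\le\Delta_{\mathrm{tm}}v\le\Delta_{\mathrm{tm}}u\le C_0\exp\bigl(-\tfrac12\Delta_{\mathrm{sp}}u\bigr)\le\frac{C_0C_3^{-2}}{n^2},
\]
which is impossible once $C_0C_3^{-2}<\min_{r<j<s}b_j$. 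The quadratic exponent is exactly what converts the spatial gain $4\log n$ into the temporal scale $n^{-2}$ of $\Delta_{\mathrm{tm}}(b_i\log n)$.

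Finally, a maximum principle on the box $[r,s]\times[1,N]$ does not work, because you have no control on the top edge $n=N$. The paper instead uses the $-\epsilon n$ term together with $\ell_i(n)/n\to0$, and then lets $\epsilon\to0^+$.
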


Our \cref{thm:affine-interval-growth} simultaneously treats arbitrary dominant rational and meromorphic maps, every codimension, arbitrary characteristic, and the resonant case
\[
\lambda_i(f)^2=\lambda_{i-1}(f)\lambda_{i+1}(f).
\]
Earlier classifications and explicit bounds are available in several special situations, including birational surface maps \cite{DF01}, polynomial maps of $\bC^2$ \cite{FJ11}, monomial maps \cite{Lin12b,FW12}, zero-entropy automorphisms in codimension one \cite{LB19,DLOZ22,HJ25}, and surjective endomorphisms in any codimension \cite{HJ-degree-growth}.
Precisely, in the author's joint work with Jiang, we have proved the upper bound $2(i-r)(s-i)$ for the polynomial exponent in \cref{thm:affine-interval-growth} for surjective endomorphisms in \cref{setting:projective}; see \cite[Theorem~1.2]{HJ-degree-growth}.
The exponent $2(i-r)(s-i)$ is sharp already for isogenies of powers of elliptic curves; see \cite[Theorem~A.1(2)]{HJ-degree-growth}.
In that work, we developed a Hodge--Lorentz framework for mixed intersections and used the Ptolemy inequality to obtain upper bounds for degree growth.
In the functorial setting, these bounds can be studied through a fixed pullback operator and its Jordan decomposition.
The present paper extends the affine-interval bound to dominant rational and meromorphic maps, for which pullback need not be functorial.
We adapt the Hodge--Lorentz framework to an ordered two-cone setting on varying finite-time orbit spaces; see \cref{rmk:proof-outline}.

\begin{corollary}\label{cor:no-intermediate-growth}
Assume that either \cref{setting:projective} or \cref{setting:kahler} holds.
Suppose that $f$ has zero entropy, i.e., $\lambda_1(f)=1$.\footnote{By log-concavity of the $\lambda_i(f)$, this is equivalent to $\lambda_i(f)=1$ for some or for all $1\le i\le d-1$.}
Then for every $1\le i\le d-1$,
\begin{equation}\label{eq:zero-entropy-polynomial-bound}
\deg_{i,\omega}(f^n)=O\bigl(n^{2i(d-i)}\bigr).
\end{equation}
Thus no degree sequence in either setting can be subexponential but superpolynomial.\footnote{A positive sequence $a_n$ is subexponential if $a_n=e^{o(n)}$, superpolynomial if $\log_na_n\to+\infty$.}
\end{corollary}

\begin{remark}
In codimension one, \cref{cor:no-intermediate-growth} gives a negative answer to Hasselblatt--Propp's intermediate-growth question for dominant rational maps \cite[Question~9.5]{HP07}; see also its precursor \cite[\S5]{BV99}.
For a dominant rational self-map $f$ of $\bbP^d$, Silverman \cite[Conjecture~2]{Silverman14} conjectured that the quantity
\begin{equation}\label{eq:Silverman-ell-f}
\ell_f \coloneqq \inf\biggl\{\ell\geq0:\sup_{n\geq1} \frac{\deg_{1,\sO(1)}(f^n)}{n^\ell \lambda_1(f)^n}<\infty\biggr\}
\end{equation}
exists and is an integer with $0\leq \ell_f\leq d$.
For $X=\bbP^d$ and $i=1$, the estimate \eqref{eq:global-polynomial-growth} proves that $\ell_f$ is finite and at most $2(d-1)$, but it does not assert integrality.
Cantat observed that $\ell_f$ can already equal $4$ in dimension three through the Oguiso--Truong quotient construction \cite{OT15}; see \cite[Example~1.11]{Urech18}.
The same estimate \eqref{eq:global-polynomial-growth} proves the polynomial-upper-bound part of Cantat's question for birational maps with $\lambda_1(f)=1$ \cite[Question~3.2]{Cantat18-ICM}, but it does not establish the additional alternative that every unbounded degree sequence grows at least linearly.
Finally, \eqref{eq:global-log-growth} affirmatively answers \cite[Question~11]{Silverman26} and establishes its higher-codimensional and K\"ahler analogues.
We refer to \cref{question:polynomial-dyn-deg,rmk:polynomial-dyn-deg-known} for a further discussion.
\end{remark}

\begin{remark}
As another consequence of \cref{thm:affine-interval-growth}, we obtain an explicit exponential-polynomial upper bound for the volume of the iterated graph $\Gamma_{[n]}$:
\[
\vol(\Gamma_{[n]})=O\bigl(n^{(d^3+2d)/3}\Lambda(f)^n\bigr),\qquad \Lambda(f)\coloneqq\max_{1\leq i\leq d}\lambda_i(f),
\]
which refines Dinh--Sibony's volume estimate $\vol(\Gamma_{[n]})\leq C_\epsilon\, n^d(\Lambda(f)+\epsilon)^n$; see \cite[Lemma~5 and End of the proof of Theorem~1]{DS05a}.
We refer to \cref{cor:volume-growth} for a slightly stronger bound with $d$ replaced by the length of the interval of maximal dynamical degrees.
Furthermore, in \cite{HS-ht-growth}, we apply the present method to establish a similar height growth of rational self-maps over number fields via Yuan--Zhang's arithmetic intersection theory \cite{YZ26}.
\end{remark}

Our second main theorem concerns degree growth at a strict point of the logarithmic dynamical-degree sequence.
For $1\leq i\leq d-1$, set
\[
s_i(f)\coloneqq\sqrt{\lambda_{i-1}(f)\lambda_{i+1}(f)}.
\]
We call $i$ a \emph{strict dynamical-degree break} if
\[
\lambda_i(f)>s_i(f).
\]
This is weaker than \emph{$i$-cohomological hyperbolicity}, which requires $\lambda_i(f)>\max_{j\neq i}\lambda_j(f)$.

For a dominant meromorphic surface map, Boucksom--Favre--Jonsson \cite{BFJ08} proved, by working on the Riemann--Zariski space $\cX$ and a Hilbert space of divisor classes on $\cX$, that the strict inequality $\lambda_1(f)^2>\lambda_2(f)$ implies
\begin{equation}\label{eq:BFJ}
\deg_{1,\omega}(f^n)=C_{f,\omega,1}\,\lambda_1(f)^n+O\bigl(\lambda_2(f)^{n/2}\bigr).
\end{equation}
Cantat \cite{Cantat11} independently introduced the same Riemann--Zariski approach in his work on the Cremona group, and Xie \cite{Xie15b} later used it to prove lower semicontinuity of $\lambda_1$.
In higher-dimensional algebraic dynamics, Dang and Favre \cite{DF21} made a foundational advance by constructing Banach spaces of numerical $b$-classes in every codimension and interpreting dynamical degrees as spectral radii.
They \cite[Theorem~1]{DF21} proved the codimension-one analogue of \eqref{eq:BFJ} for normal projective varieties in characteristic zero, with remainder $O(\eta^n)$ for every $\sqrt{\lambda_2(f)}<\eta<\lambda_1(f)$.
Degree growth has also been studied in many more specialized settings; see, e.g., \cite{Furter99,BK04,BK06,BK08,BT10,BC16-JAMS,Deserti18,Urech18,LB19,CX20,LU21,DLOZ22,Dang24,Hu-GK-AV,HJ25}.
At a strict dynamical-degree break, the polynomial upper bound of \cref{thm:affine-interval-growth} can be sharpened to a pure exponential asymptotic.

\begin{theorem}\label{thm:strict-break-asymptotics}
Assume that either \cref{setting:projective} or \cref{setting:kahler} holds, and let $1\leq i\leq d-1$ be a strict dynamical-degree break for $f$.
Then there is a constant $C_{f,\omega,i}>0$ such that for every $s_i(f)<\eta<\lambda_i(f)$, one has
\begin{equation}\label{eq:main-asymptotic}
\deg_{i,\omega}(f^n)=C_{f,\omega,i}\,\lambda_i(f)^n+O(\eta^n).
\end{equation}
Moreover, the leading constant is given by
\begin{equation}\label{eq:constant-C-exp-sum}
C_{f,\omega,i}=\vol_X(\omega)\exp\!\left(\sum_{m\geq1}m\log\frac{\deg_{i,\omega}(f^m)^2}{\deg_{i,\omega}(f^{m-1})\deg_{i,\omega}(f^{m+1})}\right),
\end{equation}
where the series converges absolutely with exponentially decaying tails.
\end{theorem}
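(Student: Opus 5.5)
The plan is to work with the temporal logarithmic sequence $a_n\coloneqq\log\deg_{i,\omega}(f^n)$ for $n\geq0$, with $a_0=\log\vol_X(\omega)$, and its discrete Laplacian
\[
(\Delta a)_m=2a_m-a_{m-1}-a_{m+1}=\log\frac{\deg_{i,\omega}(f^m)^2}{\deg_{i,\omega}(f^{m-1})\deg_{i,\omega}(f^{m+1})}.
\]

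\textbf{Key input.} The abstract advertises a uniform estimate for this temporal logarithmic Laplacian. I will take it in two parts.
\begin{enumerate}[label=(\roman*)]
\item A one-sided lower bound $(\Delta a)_m\geq -\epsilon_m$. Heuristically it is a reverse Khovanskii--Teissier/Ptolemy log-concavity in time, obtained on the orbit space $\widetilde\Gamma_{[m-1,m+1]}$.
\item A quantitative upper bound coupling the time direction to the codimension direction. Using the Ptolemy inequality on ordered Hodge--Lorentz packages over the finite-time orbit spaces $\widetilde\Gamma_{[n]}$, I expect a bound roughly of the shape
\[
\sum_{m\geq n}(\Delta a)_m^{+}\lesssim \frac{\deg_{i-1,\omega}(f^{n})\deg_{i+1,\omega}(f^{n})}{\deg_{i,\omega}(f^n)^2}.
\]
\end{enumerate}

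\textbf{Exponential decay of the Laplacian.} By \cref{thm:affine-interval-growth}, the numerators in (ii) are at most polynomial times $\lambda_{i-1}(f)^n\lambda_{i+1}(f)^n$. By \eqref{eq:standard-exponential-lower-bound}, the denominator is at least $C^{-1}\lambda_i(f)^{2n}$. Hence $|(\Delta a)_m|\leq C_\eta(\eta/\lambda_i(f))^{2m}$, or at least $\leq C_\eta(\eta/\lambda_i)^{m}$ after adjusting $\eta$. This holds for every $\eta>s_i(f)$, and the strict break guarantees that such $\eta<\lambda_i(f)$ exist. So $\sum_m m|(\Delta a)_m|<\infty$ with exponentially decaying tails, which proves the convergence claim in \eqref{eq:constant-C-exp-sum}.

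\textbf{Summation identity.} Set $b_n\coloneqq a_n-n\log\lambda_i(f)$; it has the same Laplacian as $a$. Summing by parts gives
\[
b_n = b_0 + n(b_1-b_0)-\sum_{m=1}^{n-1}(n-m)(\Delta a)_m .
\]
Since $b_n=O(\log n)$ by \eqref{eq:global-log-growth}, the coefficient of $n$ must vanish:
\[
b_1-b_0=\sum_{m\geq1}(\Delta a)_m .
\]
Substituting back,
\[
b_n=b_0+\sum_{m=1}^{n-1}m(\Delta a)_m+n\sum_{m\geq n}(\Delta a)_m ,
\]
so $b_n\to b_0+\sum_{m\geq1}m(\Delta a)_m$ with error $O(n(\eta/\lambda_i)^n)$. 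Exponentiating, and noting $e^{b_0}=\vol_X(\omega)$, gives the constant \eqref{eq:constant-C-exp-sum}. The resulting error in $\deg_{i,\omega}(f^n)$ is $O(n\eta^n)$, and the factor $n$ is absorbed by enlarging $\eta$ slightly, which is harmless since $\eta$ ranges over an open interval.

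\textbf{Main obstacle.} The hard step is the Laplacian estimate (ii): bounding the time-Laplacian of codimension-$i$ degrees by a ratio of codimension-$(i\pm1)$ degrees, uniformly in $m$, despite non-functoriality. My first attempt would be to realize $\deg_{i,\omega}(f^{m\pm1})$ and $\deg_{i,\omega}(f^m)$ as mixed intersections of pullbacks of $\omega$ from the three coordinates of $\widetilde\Gamma_{[m-1,m+1]}$. I would then apply the Ptolemy inequality for vectors in the ordered two-cone Hodge--Lorentz package, where the two cones are generated by pullbacks from earlier and later times. The defect term is a mixed intersection dominated, via Dinh--Sibony/Truong-type quasi-submultiplicativity, by $\deg_{i-1}\deg_{i+1}$ at time $n$ with uniform constants.
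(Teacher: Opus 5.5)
Your closing step is fine: the summation-by-parts identity and the vanishing of the linear coefficient are essentially \cref{lemma:discrete-integration}. In fact the error is $-\sum_{m\ge n}(m-n)(\Delta a)_m=O(q^n)$, with no extra factor $n$.

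The gap is the key input, which you neither prove nor state correctly. Your estimate (ii) has the wrong exponent. The correct quantity is $\sqrt{\deg_{i-1,\omega}(f^n)\deg_{i+1,\omega}(f^n)}/\deg_{i,\omega}(f^n)$, which decays at rate $s_i(f)/\lambda_i(f)$; (ii) uses its square. To see that (ii) fails, take \cref{prop:abelian-endpoint-failure} with $i=1$. There $\deg_{1,\Theta}(f^n)=(d-1)!\bigl(3\cdot9^n-(n+4)6^n+O(n^24^n)\bigr)$, so $(\Delta a)_m=\frac{m-1}{27}(2/3)^{m-1}+O(m^2(4/9)^m)$ is positive of exact order $m(2/3)^m$. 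Meanwhile $\deg_{0,\Theta}(f^n)\deg_{2,\Theta}(f^n)/\deg_{1,\Theta}(f^n)^2=O(n^2(4/9)^n)$. More conceptually, (ii) would force a remainder $o(s_i(f)^n)$, contradicting that proposition.

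Your (i) is left unquantified, and one-sided control cannot give an exponential remainder. For example, slopes $b_n-b_{n-1}=-1/n^2$ give $(\Delta b)_m<0$ for all $m$, yet $b_n$ converges only at rate $1/n$. What is needed is the two-sided, $n$-uniform bound $|(\Delta a)_n|\le C'\sqrt{\deg_{i-1,\omega}(f^n)\deg_{i+1,\omega}(f^n)}/\deg_{i,\omega}(f^n)$ of \cref{thm:degree-log-laplacian}. After that, only the existence of the limits $\lambda_j(f)=\lim_n\deg_{j,\omega}(f^n)^{1/n}$ is used.

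Citing \cref{thm:affine-interval-growth} and \eqref{eq:global-log-growth} is circular, since the paper proves them using the present theorem at the endpoints of affine intervals. It is also unnecessary: $b_n=o(n)$ suffices for your identity.

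Your sketch of the key estimate would not produce it either. On an orbit space with coordinates at the times $m-1,m,m+1$, every mixed intersection of coordinate pullbacks of $\omega$ is $\lambda_d(f)^{m-1}$ times a fixed intersection number on $Y_2$, so $\deg_{i,\omega}(f^m)$ never appears. Quasi-submultiplicativity controls degrees only up to the multiplicative constant $C_{X,\omega,i}$, so it cannot yield an error tending to $0$.

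The paper instead uses the four times $0,1,n,n+1$ on $Y_{n+1}$. Set $\eta_m=\pi_{m,n+1}^*\omega$, $A_s=(1-s)\eta_0+s\eta_1$, $B_t=(1-t)\eta_n+t\eta_{n+1}$, and $\Phi_i(s,t)=Q(A_s^{[d-i]},B_t^{[i]})$. The two-time shift identity makes the corner values $\deg_i(f^n)$, $\lambda_d\deg_i(f^{n-1})$, $\deg_i(f^{n+1})$, $\lambda_d\deg_i(f^n)$. Hence $\int_0^1\!\int_0^1\partial_s\partial_t\log\Phi_i$ equals $(\Delta a)_n$ exactly.

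With $\sB=Q(\cdot,\cdot,A_s^{[d-i-1]},B_t^{[i-1]})$, the integrand is $i(d-i)$ times $\sB(A_0,B_0)\sB(A_1,B_1)-\sB(A_0,B_1)\sB(A_1,B_0)$, divided by $\Phi_i^2$. Ptolemy bounds this determinant by $\sqrt{\sB(A_0,A_1)\sB(B_0,B_1)}$ times a sum of square roots; this is where the square root comes from. The orbit-uniform comparison $K^{-1}\eta_m\preceq\eta_{m+1}\preceq K\eta_m$, pulled back from the fixed $\widetilde\Gamma_1$, then gives $\sB(A_0,A_1)\le K^2\Phi_{i-1}$, $\sB(B_0,B_1)\le K^2\Phi_{i+1}$, and $\Phi_j\asymp\deg_j(f^n)$, with constants independent of $n$.

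Finally, the two cones in the package are the nef cone (for Hodge index and Ptolemy) and the pseudo-effective cone (for the order), not cones attached to early and late times.
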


In \cref{setting:projective}, \cref{thm:strict-break-asymptotics} extends Dang--Favre's degree-asymptotic theorem \cite[Theorem~1]{DF21} from codimension one in characteristic zero to every intermediate codimension and arbitrary characteristic.
In \cref{setting:kahler}, it gives a higher-dimensional, arbitrary-codimension analogue of the Boucksom--Favre--Jonsson surface theorem \cite[Main~Theorem]{BFJ08}.
On surfaces, the two adjacent degree sequences have exact exponential rates, so that the endpoint remainder in \eqref{eq:BFJ} holds in both settings; see \cref{cor:surface-endpoint}\ref{cor:surface-endpoint-strict-break}.

\begin{remark}\label{rmk:strict-break-sharpness}
Because \eqref{eq:main-asymptotic} holds for every $\eta>s_i(f)$, it is equivalent to
\[
\limsup_{n\to\infty}\left|\deg_{i,\omega}(f^n)-C_{f,\omega,i}\,\lambda_i(f)^n\right|^{1/n}\leq s_i(f).
\]
The threshold $s_i(f)$ in the above estimate is optimal.
However, one cannot replace the remainder in \eqref{eq:main-asymptotic} by $O\bigl(s_i(f)^n\bigr)$ in general.
For every $d\geq3$ and every $1\leq i\leq d-1$, we construct an isogeny of a power of an elliptic curve for which the remainder is asymptotic to a nonzero multiple of $ns_i(f)^n$; see \cref{prop:abelian-endpoint-failure}.
\end{remark}

\begin{remark}\label{rmk:strict-break-consequences}
\Cref{thm:strict-break-asymptotics} also yields exponential convergence of successive degree ratios $\deg_{i,\omega}(f^{n+1})/\deg_{i,\omega}(f^n)$; see \cref{cor:quotient}.
It further gives a meromorphic continuation of the $i$-th degree generating series 
\[
\cD_{i,f,\omega}(z)\coloneqq \sum_{n\geq0}\deg_{i,\omega}(f^n)z^n
\]
to the disk $|z|<s_i(f)^{-1}$, with a unique simple pole at $z=\lambda_i(f)^{-1}$; see \cref{cor:generating-series}.
In the projective setting, the continuation radius $s_i(f)^{-1}$ is optimal in every dimension and intermediate codimension when $\Char(\bk)\neq2$; see \cref{rmk:BDJ-Nguyen}.
\end{remark}

Both \cref{thm:affine-interval-growth,thm:strict-break-asymptotics} follow from a uniform estimate for the temporal discrete logarithmic Laplacian of the degree sequences.

\begin{theorem}\label{thm:degree-log-laplacian}
Assume that either \cref{setting:projective} or \cref{setting:kahler} holds.
For every $1\leq i\leq d-1$, there exists a constant $C'_{f,\omega,i}>0$ such that for every $n\geq1$,
\begin{equation}\label{eq:degree-log-laplacian}
\left|\log\frac{\deg_{i,\omega}(f^n)^2}{\deg_{i,\omega}(f^{n-1})\deg_{i,\omega}(f^{n+1})}\right|
\leq C'_{f,\omega,i}\, \frac{\sqrt{\deg_{i-1,\omega}(f^n)\deg_{i+1,\omega}(f^n)}}{\deg_{i,\omega}(f^n)}.
\end{equation}
\end{theorem}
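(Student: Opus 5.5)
The plan is to realize all three degrees $\deg_{i,\omega}(f^{n-1})$, $\deg_{i,\omega}(f^n)$, $\deg_{i,\omega}(f^{n+1})$ as mixed intersection numbers on one space, and to compare them via a Lorentzian (Hodge index) inequality. That space is a finite-time orbit space. Let $\Gamma_{[0,n+1]}$ be (a normalization, resp.\ a K\"ahler resolution, of) the closure of the graph of $x\mapsto(x,f(x),\dots,f^{n+1}(x))$ in $X^{n+2}$, with projections $\pi_0,\dots,\pi_{n+1}$. Put $\omega_k\coloneqq\pi_k^*\omega$.

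\emph{Step 1: express the degrees on the orbit space.} By projection formulas, $\deg_{i,\omega}(f^m)=(\omega_a^{d-i}\cdot\omega_{a+m}^{i})$ for any $a$. So
\[
\deg_{i,\omega}(f^n)=(\omega_0^{d-i}\omega_n^i)=(\omega_1^{d-i}\omega_{n+1}^i),\quad \deg_{i,\omega}(f^{n+1})=(\omega_0^{d-i}\omega_{n+1}^i),\quad \deg_{i,\omega}(f^{n-1})=(\omega_1^{d-i}\omega_n^i).
\]

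\emph{Step 2: replace $\omega_1$ and $\omega_{n+1}$ by $\omega_0$ and $\omega_n$.} Since $f$ is fixed, the one-step classes can be compared with the others up to constants depending only on $f,\omega,i$: one has $\omega_1\leq C\,\omega_0$ modulo the exceptional contributions of the fixed graph of $f$, and similarly $\omega_{n+1}$ against $\omega_n$. These comparisons should come from the quasi-submultiplicativity estimates of Truong, Dang and Dinh--Sibony recalled earlier, used in their relative form (a nef class on the graph of $f$ is dominated, after pushforward, by a multiple of $\omega$). After this comparison the ordered pair of cones generated by the ``early'' classes $\omega_0,\omega_1$ and the ``late'' classes $\omega_n,\omega_{n+1}$ has uniformly bounded distortion.

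\emph{Step 3: apply the Ptolemy inequality on a Hodge--Lorentz package.} On the orbit space, form the bilinear pairing
\[
\langle x,y\rangle\coloneqq\bigl(x\cdot y\cdot \omega_0^{\,d-i-1}\cdot\omega_n^{\,i-1}\bigr),
\]
with the mixed complementary factors chosen by interpolation. By the Hodge index / Khovanskii--Teissier theory (the Hodge--Lorentz package of \cite{HJ-degree-growth}, valid in arbitrary characteristic by numerical arguments and in the K\"ahler case by Hodge--Riemann), this pairing is Lorentzian on the span of nef classes. Apply the Ptolemy inequality to the four vectors $a=\omega_0$, $b=\omega_1$, $c=\omega_n$, $d=\omega_{n+1}$. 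It relates
\[
\sqrt{\langle a,d\rangle\langle b,c\rangle}\ \text{ to }\ \langle a,c\rangle,\ \langle b,d\rangle,
\]
with a defect controlled by the Lorentz norms of $a$ and $c$. These norms are $\langle a,a\rangle=(\omega_0^{d-i+1}\omega_n^{i-1})=\deg_{i-1,\omega}(f^n)$ and $\langle c,c\rangle=\deg_{i+1,\omega}(f^n)$. Dividing by $\deg_{i,\omega}(f^n)^2$ gives a two-sided bound
\[
\left|\frac{\deg_{i,\omega}(f^n)^2}{\deg_{i,\omega}(f^{n-1})\deg_{i,\omega}(f^{n+1})}-1\right|\lesssim \frac{\sqrt{\deg_{i-1}\deg_{i+1}}}{\deg_i}.
\]
The upper bound uses reverse Cauchy--Schwarz; the lower bound uses Ptolemy with the norm correction. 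Since the right-hand side is at most a constant by Khovanskii--Teissier, the inequality passes to logarithms, which yields \eqref{eq:degree-log-laplacian}.

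\emph{Main obstacle.} The hard step is Step 3 combined with Step 2. The complementary factors $\omega_0^{d-i-1}\omega_n^{i-1}$ do not literally match the mixed products that appear, since e.g.\ $\deg_{i,\omega}(f^{n-1})$ naturally uses $\omega_1$ and $\omega_n$. One therefore needs an \emph{ordered} two-cone version of the Ptolemy inequality in which early classes and late classes may be interchanged at bounded multiplicative cost, and these costs must be uniform in $n$. I would first try to prove the Ptolemy inequality for the whole family of pairings obtained by replacing each complementary factor with its time-shifted counterpart. I would then chain these inequalities one factor at a time, checking that each swap costs only a factor $1+O(\sqrt{\deg_{i-1}\deg_{i+1}}/\deg_i)$ rather than a fixed constant.
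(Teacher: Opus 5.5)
Your ingredients are exactly the paper's: the orbit space $Y_{n+1}$ (your $\Gamma_{[0,n+1]}$), the two-time shift, a uniform adjacent comparison, and Ptolemy for a pairing with frozen nef factors. But there is a genuine gap at the step you flag as the main obstacle, and it is the heart of the proof. Step~3 as written does not work for $d\geq3$. With the single frozen factor $\omega_0^{d-i-1}\omega_n^{i-1}$, the pairings $\langle\omega_0,\omega_{n+1}\rangle$, $\langle\omega_1,\omega_n\rangle$, $\langle\omega_1,\omega_{n+1}\rangle$ are mixed intersections. They coincide with $\deg_i(f^{n+1})$, $\lambda_d(f)\deg_i(f^{n-1})$, $\lambda_d(f)\deg_i(f^n)$ only when $i=1=d-i$. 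So one Ptolemy application bounds the cross-ratio of the wrong numbers.

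The missing idea is to let the frozen factors move together with the four vectors. The paper does this continuously. Set $A_s=(1-s)\omega_0+s\omega_1$, $B_t=(1-t)\omega_n+t\omega_{n+1}$ and $\Phi_i(s,t)=(A_s^{d-i}\cdot B_t^{i})$. A determinant identity gives $\partial_s\partial_t\log\Phi_i=i(d-i)\bigl(\sB(A_0,B_0)\sB(A_1,B_1)-\sB(A_0,B_1)\sB(A_1,B_0)\bigr)/\Phi_i^2$, with $\sB$ frozen at $A_s^{[d-i-1]},B_t^{[i-1]}$. Ptolemy bounds the numerator pointwise, and integrating over $[0,1]^2$ returns exactly the logarithm of the corner cross-ratio.

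Your chaining idea can be made into a correct discrete version of this, and it needs no new ``ordered'' Ptolemy inequality. Put $F(p,q)\coloneqq(\omega_0^{d-i-p}\omega_1^{p}\omega_n^{i-q}\omega_{n+1}^{q})$ for $0\le p\le d-i$, $0\le q\le i$. Then
\[
\log\frac{\deg_i(f^n)^2}{\deg_i(f^{n-1})\deg_i(f^{n+1})}=\sum_{p=0}^{d-i-1}\sum_{q=0}^{i-1}\log\frac{F(p,q)\,F(p+1,q+1)}{F(p+1,q)\,F(p,q+1)}.
\]
Each summand is controlled by the ordinary Ptolemy inequality, applied to $\omega_0,\omega_n,\omega_1,\omega_{n+1}$ for the pairing frozen at the nef classes $\omega_0^{d-i-1-p}\omega_1^{p}\omega_n^{i-1-q}\omega_{n+1}^{q}$. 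The pseudo-effective order enters only in two places. It bounds the Ptolemy defect $\sqrt{\sB(\omega_0,\omega_1)\sB(\omega_n,\omega_{n+1})}$ by $K^{d-1}\sqrt{\deg_{i-1}(f^n)\deg_{i+1}(f^n)}$. It also gives the lower bound $F(p,q)\geq K^{-d}\deg_i(f^n)$. Combined with $|\log a-\log b|\leq|a-b|/\min(a,b)$, this yields the theorem with constant $2i(d-i)K^{2d}$. Note that Ptolemy gives both directions at once, so no reverse Cauchy--Schwarz is needed. Passing to logarithms uses these lower bounds on $F$, not Khovanskii--Teissier.

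Two further corrections. In Step~1, the projection formula gives $(\omega_a^{d-i}\cdot\omega_{a+m}^{i})=\lambda_d(f)^a\deg_i(f^m)$, because the map $Y_{n+1}\to\widetilde\Gamma_m$ given by the coordinates $(a,a+m)$ has degree $\lambda_d(f)^a$. The factor cancels in the cross-ratio, so this slip is harmless.

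In Step~2, the adjacent comparison does not come from quasi-submultiplicativity or from pushforward bounds on $X$. It also holds exactly in the pseudo-effective order, not ``modulo exceptional contributions.'' On $\widetilde\Gamma_1$ both $p_1^*\omega$ and $q_1^*\omega$ are big, hence interior to the pseudo-effective cone. So $K^{-1}p_1^*\omega\preceq q_1^*\omega\preceq K\,p_1^*\omega$ for some $K=K_{f,\omega}\geq1$. Pull this back along the surjective generically finite map $Y_{n+1}\to\widetilde\Gamma_1$ given by the coordinates $(m,m+1)$; in the K\"ahler case, resolve the lift first and then push forward. This gives $K^{-1}\omega_m\preceq\omega_{m+1}\preceq K\omega_m$ on $Y_{n+1}$, uniformly in $n$. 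This pullback comparison, paired against nef classes, is what produces all the $K$-bounds above.
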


Here the codimension $i$ is the spatial variable and the iterate $n$ is the temporal variable.
Finite-time Khovanskii--Teissier inequalities yield spatial log-concavity of the degree sequences, while \eqref{eq:degree-log-laplacian} controls the temporal discrete logarithmic Laplacian by their neighboring spatial degrees.
Xie's recursive inequalities \cite{Xie-SC} provide a different kind of control on degree sequences; among other consequences, they imply that dynamical degrees are lower semicontinuous in the Zariski topology in algebraic families of dominant rational self-maps.

\begin{remark}\label{rmk:proof-outline}
We briefly outline the proofs.
For each finite orbit segment, we work over a normalized projective orbit variety or a compact K\"ahler resolution of the corresponding analytic orbit space.
These spaces are variants of the iterated graphs used in \cite{Gromov03,Friedland91,DS05a,DS08}.
The coordinate pullbacks of $\omega$ are nef and big, adjacent coordinate classes are uniformly comparable in the pseudo-effective order, and a two-time shift identity converts mixed intersections at times $m$ and $\ell$ into degrees of $f^{\ell-m}$ multiplied by $\lambda_d(f)^m$.
Together, these data form an ordered Hodge--Lorentz package in the sense of \cref{def:ordered-HL}.

The four-point Ptolemy inequality controls the determinant appearing in a mixed logarithmic derivative of the moving intersection function.
Integrating over the parameter square then yields the uniform estimate \eqref{eq:degree-log-laplacian}.
At a strict dynamical-degree break, the $n$-th root of the right-hand side of \eqref{eq:degree-log-laplacian} tends to $s_i(f)/\lambda_i(f)<1$.
The logarithmic slopes of the degree sequence are therefore exponentially Cauchy, and an elementary lemma on sequences proves \cref{thm:strict-break-asymptotics}.

On a maximal affine interval $[r,s]$, the normalized degree sequences at its endpoints are bounded by \cref{thm:strict-break-asymptotics}.
Combining the same temporal estimate \eqref{eq:degree-log-laplacian} with a discrete Phragm\'en--Lindel\"of argument \cref{lemma:two-variable-maximum-principle} gives the polynomial upper bound with exponent $2(i-r)(s-i)$ for the normalized degree sequences.
This can be regarded as a two-variable maximum principle.
\end{remark}

The paper is organized as follows.
In \S\ref{sec:HL-Ptolemy-Laplacian}, we introduce ordered Hodge--Lorentz packages and derive the abstract temporal logarithmic-Laplacian estimate \cref{thm:abstract-log-laplacian} from Ptolemy.
In \S\ref{sec:orbit-realizations}, we construct the projective and compact K\"ahler finite-time orbit realizations and prove \cref{thm:degree-log-laplacian}.
In \S\ref{sec:proof-asymptotics}, we prove first the strict-break asymptotic \cref{thm:strict-break-asymptotics} and then the affine-interval polynomial upper bound \cref{thm:affine-interval-growth}, together with \cref{cor:no-intermediate-growth}.
Finally, in \S\ref{sec:further-consequences}, we treat volume growth of iterated graphs, the successive quotients, degree generating series, endpoint behavior, and conclude with \cref{question:polynomial-dyn-deg}.

\subsection*{Acknowledgments}
The author thanks Chen Jiang for the preceding collaboration \cite{HJ-degree-growth} and for helpful discussions.
The author gratefully acknowledges the hospitality of the Fields Institute during Summer 2026, when the majority of this work was carried out.

\subsection*{Declaration on the use of generative AI}
The author used OpenAI's ChatGPT (5.5 and 5.6) during the preparation and revision of this manuscript.
Assistance from the tool was instrumental in developing \cref{thm:degree-log-laplacian}, which controls the temporal discrete logarithmic Laplacian of the degree sequence in terms of the neighboring spatial degrees.
The tool also assisted in simplifying the proof of \cref{lemma:two-variable-maximum-principle} via a discrete Phragm\'en--Lindel\"of argument.
It was used to cross-check
the explicit matrix computations in \cref{prop:abelian-endpoint-failure}.
It also assisted with revisions to the language and exposition throughout the manuscript.
The author independently reviewed and verified all mathematical statements, arguments, and computations, and takes sole responsibility for the content of the manuscript, including the accuracy of its citations.

\section{Ordered Hodge--Lorentz packages and an abstract discrete logarithmic-Laplacian estimate}
\label{sec:HL-Ptolemy-Laplacian}

The projective and compact K\"ahler arguments use the same two positivity structures: a cone of nef classes for mixed intersections, and a larger pseudo-effective cone defining an order.
We extract precisely those properties needed to derive a uniform estimate for the mixed logarithmic derivatives of moving intersection functions from the Ptolemy inequality, and prove the abstract form \cref{thm:abstract-log-laplacian} of the temporal discrete logarithmic-Laplacian estimate \cref{thm:degree-log-laplacian}.

\subsection{Ordered Hodge--Lorentz packages}

\begin{definition}\label{def:ordered-HL}
Let $V$ be a finite-dimensional real vector space, let $\cN\subseteq\cP\subseteq V$ be nonempty convex cones containing the origin, and let $Q\colon V^d\to\bR$ be a symmetric multilinear form, where $d\geq2$.
We call $(V,\cN,\cP,Q)$ an \emph{ordered Hodge--Lorentz package} of degree $d$ if the following conditions hold.
\begin{enumerate}[label=\textnormal{(\roman*)}]
\item \label{def:ordered-HL-i} Nonnegativity: For every $\xi\in\cP$ and $\eta_2,\ldots,\eta_d\in\cN$,
\[
Q(\xi,\eta_2,\ldots,\eta_d)\geq0.
\]
\item \label{def:ordered-HL-ii} Lorentzian: For every $\eta_3,\ldots,\eta_d\in\cN$, the symmetric bilinear form
\[
\sB\colon V\times V \to \bR, \quad (v_1,v_2)\longmapsto Q(v_1,v_2,\eta_3,\ldots,\eta_d)
\]
has positive index at most one.
\end{enumerate}
We write $v_1\preceq_{\cP}v_2$ if $v_2-v_1\in\cP$; this is the preorder induced by $\cP$.
\end{definition}

This definition is an ordered two-cone variant of the Hodge--Lorentz package $(V,\cC,Q)$ developed by Hu--Jiang \cite{HJ-degree-growth}.
The cone $\cN$ specifies the classes used in the mixed Hodge--Lorentz forms and the Ptolemy inequality, \cref{lemma:Ptolemy}, while the larger cone $\cP$ induces the comparison preorder.
No closedness, salience, or full-dimensionality assumption on either cone is used in the abstract argument.

Below we give two standard geometric realizations of ordered Hodge--Lorentz packages, one in the projective setting and one in the compact K\"ahler setting.
We refer to \cite[\S2]{FL17a} and \cite[\S2]{DHZ15} for brief reviews on the cones of ample/K\"ahler, nef, big, and pseudo-effective classes, in the two settings respectively.

\begin{proposition}\label{prop:geometric-HL}
The following are ordered Hodge--Lorentz packages.
\begin{enumerate}[label=\textnormal{(\alph*)}]
\item If $Y$ is a normal projective variety of dimension $d\ge2$ over an algebraically closed field, take
\[
V=\N^1(Y)_\bR,\qquad
\cN=\Nef^1(Y),\qquad
\cP=\Psef^1(Y),
\]
and
\[
Q(D_1,\ldots,D_d)=(D_1\cdots D_d)_Y.
\]
\item If $Y$ is a compact K\"ahler manifold of dimension $d\ge2$, take
\[
V=H^{1,1}(Y,\bR),\qquad
\cN=\overline{\cK}(Y),\qquad
\cP=\cE(Y),
\]
and
\[
Q(\alpha_1,\ldots,\alpha_d)=\int_Y\alpha_1\wedge\cdots\wedge\alpha_d.
\]
\end{enumerate}
In either realization, the mixed intersection of nef and big classes is strictly positive.
\end{proposition}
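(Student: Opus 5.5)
We check the two axioms of \cref{def:ordered-HL} separately in each realization, together with the positivity statement. In both cases the cones $\cN\subseteq\cP$ are convex cones containing $0$, since nef classes are pseudo-effective (respectively, nef classes are limits of K\"ahler classes and lie in the closed cone $\cE(Y)$ of pseudo-effective $(1,1)$-classes). The form $Q$ is symmetric and multilinear by the basic properties of intersection numbers on $\N^1(Y)_\bR$ and of wedge products integrated over $Y$.

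\emph{Nonnegativity.} For \ref{def:ordered-HL-i}, we use that the product $\eta_2\cdots\eta_d$ of nef classes is a nef curve class, or a positive $(d-1,d-1)$-class. In (a), if $\eta_j$ are ample, then $\eta_2\cdots\eta_d$ is represented by an effective curve after passing to multiples. A pseudo-effective divisor is a limit of effective $\bR$-divisors, and it meets such a complete-intersection curve nonnegatively after a moving argument. Equivalently, we can invoke the duality $\overline{\mathrm{Eff}}^1=(\text{movable curves})^\vee$, or simply the fact that complete intersections of ample divisors are movable. We then pass to the limit from ample to nef by continuity. In (b), a pseudo-effective class is represented by a closed positive current $T$. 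For K\"ahler forms $\eta_j$, the pairing $\int T\wedge\eta_2\wedge\cdots\wedge\eta_d$ is the mass of a positive measure, hence $\geq0$. Continuity again extends this to nef classes.

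\emph{Lorentzian property.} For \ref{def:ordered-HL-ii}, we first treat ample or K\"ahler $\eta_3,\ldots,\eta_d$. This is the Hodge index theorem for the bilinear form $(v_1,v_2)\mapsto v_1 v_2\eta_3\cdots\eta_d$, i.e.\ the Hodge--Riemann bilinear relations in degree one. In (b) this is classical (Gromov, Dinh--Nguy\^en mixed Hodge--Riemann). In (a), on a normal variety in arbitrary characteristic, we pass to a resolution or alteration $\pi\colon Y'\to Y$, using de~Jong alterations in positive characteristic. We pull back, use the projection formula (which scales $Q$ by $\deg\pi$), and cut down by general hyperplane sections (Bertini, or via the alteration) to a smooth surface. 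There the classical Hodge index theorem applies, and injectivity of $\pi^*$ on $\N^1$ transfers the signature bound. The passage to nef $\eta_j$ is then a limit. Having positive index at most one is a closed condition, because the set of forms with $\geq2$ positive eigenvalues is open. I expect the main technical point to be this arbitrary-characteristic Hodge index step for mixed classes on a singular $Y$. The cleanest route is to cite the known Khovanskii--Teissier/Hodge index results for normal projective varieties, e.g.\ via \cite{HJ-degree-growth} or standard references.

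\emph{Strict positivity.} For nef and big $\beta_1,\ldots,\beta_d$, we write each as an ample (K\"ahler) class plus an effective class on a suitable modification, or use the Khovanskii--Teissier inequality
\[
Q(\beta_1,\ldots,\beta_d)\geq\prod_j\vol(\beta_j)^{1/d}>0,
\]
which follows from the Lorentzian property just established by the standard induction on $d$. Bigness gives $\vol(\beta_j)=(\beta_j^d)>0$ for nef classes, and this concludes the proof.
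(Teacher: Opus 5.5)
Your proposal is correct. For nonnegativity and the Lorentzian axiom it follows the paper. You pair effective representatives or positive currents against ample or K\"ahler classes, use the mixed Hodge index/Hodge--Riemann theorem for frozen ample or K\"ahler classes, and pass to nef classes using that positive index at most one is a closed condition. The paper simply cites the arbitrary-characteristic mixed Hodge index theorem for (a). If you run your alteration sketch instead, the classes $\pi^*h_j$ are only nef and big on $Y'$, and Bertini's smoothness theorem can fail for base-point-free systems in positive characteristic. So either cut $Y$ down to a surface before resolving, or perturb $\pi^*h_j$ to ample classes on $Y'$ first.

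The real difference is in strict positivity. The paper fixes an ample or K\"ahler class $h$ and uses that big classes lie in the interior of $\cP$, so $\omega_j-\epsilon_jh\in\cP$ for some $\epsilon_j>0$ on $Y$ itself. Applying condition~(i) of \cref{def:ordered-HL} $d$ times gives $Q(\omega_1,\ldots,\omega_d)\geq\epsilon_1\cdots\epsilon_d\,Q(h,\ldots,h)>0$. This needs only nonnegativity and the definition of bigness. Your first alternative (ample plus effective) is exactly this argument, except that no modification is necessary.

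Your Khovanskii--Teissier route needs more: the Lorentzian axiom, and the combinatorial passage from the two-class Alexandrov--Fenchel inequality to the product inequality. That passage is an induction on the number of distinct classes with the other slots frozen, not on $d$, which is fixed in the package. It also needs the fact that a nef and big class has positive top self-intersection. That fact is the diagonal case of the very claim, and is usually proved by the paper's argument. In exchange you get the quantitative bound $Q(\beta_1,\ldots,\beta_d)\geq\prod_j\vol(\beta_j)^{1/d}$, which the paper does not need here.
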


\begin{proof}
In the projective case, an effective $\bR$-Cartier divisor has nonnegative intersection with ample divisors.
Approximating the nef divisor classes $\eta_2,\ldots,\eta_d$ by ample classes and the pseudo-effective divisor class $\xi$ by effective classes shows that a pseudo-effective divisor has nonnegative intersection with nef divisors.
For frozen ample classes $h_3,\ldots,h_d$, the arbitrary-characteristic mixed Hodge index theorem (see \cite[Proposition~2.9]{Hu20a}; cf. \cite[Variant~1.6.2 and Remark~1.6.5]{Lazarsfeld-I}, \cite[Theorem~5.20]{YZ17}) shows that the resulting bilinear form $\sB$ is negative definite on the primitive hyperplane
\[
\{v\in V:Q(v,h,h_3,\ldots,h_d)=0\}
\]
for any ample class $h$, while $Q(h,h,h_3,\ldots,h_d)>0$.
The form therefore has positive index one.
Replacing frozen nef divisor classes $\eta_j$, with $3\le j\le d$, by $\eta_j+\epsilon h$ and letting $\epsilon\to 0^+$ proves that the positive index is at most one in the nef case.

In the compact K\"ahler case, represent a pseudo-effective class by a positive closed $(1,1)$-current and first pair it with K\"ahler forms.
Approximating each nef class by K\"ahler classes and passing to the limit verifies the nonnegativity condition~\ref{def:ordered-HL-i}.
For frozen K\"ahler classes, the mixed Hodge--Riemann theorem \cite{Gromov90,DN06} yields negative definiteness on the corresponding primitive hyperplane, and hence the positive index one assertion.
Approximating the frozen nef classes $\eta_3,\ldots,\eta_d$ by K\"ahler classes again proves that the positive index is at most one.

It remains to show strict positivity.
Fix an ample class in the projective case and a K\"ahler class in the analytic case, and denote it by $h$.
If $\omega_1,\ldots,\omega_d$ are nef and big, then for suitable $\epsilon_j>0$ the classes $\omega_j-\epsilon_jh$ are pseudo-effective.
Repeatedly applying the nonnegativity condition~\ref{def:ordered-HL-i} yields
\[
Q(\omega_1,\ldots,\omega_d)\geq\epsilon_1\cdots\epsilon_d\,Q(h,\ldots,h)>0.
\qedhere
\]
\end{proof}

\subsection{The four-point Ptolemy inequality}

We recall the following form of the four-point Ptolemy inequality from \cite{HJ-degree-growth}; see also \cite[Proposition~3.1]{HHMWW} for the equivalent zero-diagonal matrix characterization and Huh's lectures \cite{Huh-SRI} at the 2025 Summer Research Institute.
The short determinant proof is included for completeness.

\begin{lemma}[{see \cite[Theorem~2.10(2)]{HJ-degree-growth}}]\label{lemma:Ptolemy}
Let $V$ be a finite-dimensional real vector space and let $\sB\colon V\times V\to\bR$ be a symmetric bilinear form with positive index at most one.
If $v_1,\ldots,v_4\in V$ satisfy $\sB(v_i,v_j)\geq0$ for all $1\le i,j\le 4$, then
\begin{equation}\label{eq:four-point-Ptolemy}
\Bigl|\sqrt{\sB(v_1,v_2)\sB(v_3,v_4)}-\sqrt{\sB(v_1,v_4)\sB(v_2,v_3)}\Bigr|\leq \sqrt{\sB(v_1,v_3)\sB(v_2,v_4)}.
\end{equation}
\end{lemma}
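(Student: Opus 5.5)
The plan is to reduce \eqref{eq:four-point-Ptolemy} to a sign condition on a $3\times3$ Gram determinant. Write $b_{ij}\coloneqq\sB(v_i,v_j)\geq0$. Squaring both sides shows that \eqref{eq:four-point-Ptolemy} is equivalent to the three ``triangle'' inequalities among the nonnegative numbers
\[
x\coloneqq\sqrt{b_{12}b_{34}},\qquad y\coloneqq\sqrt{b_{14}b_{23}},\qquad z\coloneqq\sqrt{b_{13}b_{24}}.
\]
In fact the statement only asks for $|x-y|\le z$, which is $x^2+y^2-z^2\le 2xy$. This in turn follows from the single inequality
\[
(x+y+z)(-x+y+z)(x-y+z)(x+y-z)\geq0
\]
together with a separate check that the ordering cannot make this product positive through two negative factors.

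The route to that inequality is to restrict $\sB$ to the span of $v_1,\ldots,v_4$ and use positive index at most one. The cleanest first step is to treat $v_4$ as a base point and look at the $3\times3$ Gram matrix $G=(b_{ij})_{i,j\le3}$.
\begin{enumerate}
\item If some $b_{ii}>0$, then $\sB$ is Lorentzian on the span, and a Gram matrix of vectors in such a form has at most one positive eigenvalue.
\item In the nondegenerate-diagonal case, a rescaling $v_i\mapsto t_iv_i$ with $t_i>0$ leaves both sides of \eqref{eq:four-point-Ptolemy} homogeneous of the same degree in each $t_i$. One can therefore normalize and reduce to the zero-diagonal picture of \cite{HHMWW}.
\item A real symmetric $3\times3$ matrix $M$ with zero diagonal, nonnegative entries and at most one positive eigenvalue must satisfy $\det M\geq0$, since it has at least two nonpositive eigenvalues and trace zero. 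For $M=\begin{psmallmatrix}0&a&b\\a&0&c\\b&c&0\end{psmallmatrix}$ we have $\det M=2abc\geq0$, so this alone gives nothing.
\end{enumerate}
The real input is therefore the $4\times4$ matrix. I would pass to $M_{ij}=b_{ij}$ with the diagonal killed by the substitution $v_i\mapsto v_i-\tfrac{b_{ii}}{2 b_{i4}}v_4$ (or an analogous projective trick). For a $4\times4$ zero-diagonal symmetric matrix with at most one positive eigenvalue, the sign of $\det M$ is fixed. Expanding gives
\[
\det M=x^4+y^4+z^4-2x^2y^2-2y^2z^2-2z^2x^2=-(x+y+z)(-x+y+z)(x-y+z)(x+y-z).
\]
Having at least three nonpositive eigenvalues with trace zero forces $\det M\leq0$. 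That yields the desired product inequality, hence $|x-y|\le z$ after noting that the other two factors are automatically positive when the inequality fails.

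The main obstacle is the diagonal: the hypotheses allow $b_{ii}>0$, so $M$ is not zero-diagonal. I would first try to show directly that, when $\sB$ has positive index at most one, the $4\times4$ Gram matrix $G$ with nonnegative entries has $\det$ of the appropriate sign after the diagonal is replaced by zeros. Concretely, I would note that decreasing diagonal entries of a matrix keeps it at most one positive eigenvalue, by Cauchy interlacing applied to $G-D$ with $D\geq0$ diagonal. Since $G$ has at most one positive eigenvalue, $G-\operatorname{diag}(b_{ii})$ has at most one positive eigenvalue as well. This reduces everything to the zero-diagonal computation above. Degenerate cases where some $x,y,z$ vanish follow by continuity or directly.
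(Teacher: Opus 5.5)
Your final argument is exactly the paper's proof, and it is correct: replace the Gram matrix $G=(b_{ij})$ by $M=G-\mathrm{diag}(b_{11},\ldots,b_{44})$, note that $M$ still has at most one positive eigenvalue, use $\tr M=0$ to get $\det M\le 0$, expand $\det M=-(x+y+z)(-x+y+z)(x-y+z)(x+y-z)$, and read off $|x-y|\le z$. Two fixes are needed. The monotonicity step is Weyl's inequality, or directly: if $M$ were positive definite on a $2$-plane, so would be $G=M+\mathrm{diag}(b_{ii})$; it is not Cauchy interlacing. The rescaling, $3\times3$, and $v_i\mapsto v_i-\tfrac{b_{ii}}{2b_{i4}}v_4$ detours should be discarded, since the substitution does not zero the diagonal in general and it alters the off-diagonal entries entering \eqref{eq:four-point-Ptolemy}.
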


\begin{proof}
Set $b_{ij}=\sB(v_i,v_j)$ and let $G=(b_{ij})_{1\leq i,j\leq4}$ be the Gram matrix.
Its positive index is at most that of $\sB$, hence at most one.
The matrix
\[
M=
\begin{pmatrix}
0&b_{12}&b_{13}&b_{14}\\
b_{12}&0&b_{23}&b_{24}\\
b_{13}&b_{23}&0&b_{34}\\
b_{14}&b_{24}&b_{34}&0
\end{pmatrix}
\]
satisfies $M=G-\mathrm{diag}(b_{11},\ldots,b_{44})$.
Since $b_{ii}\ge0$ for all $1\le i\le 4$, $M$ has positive index at most one.
If $M$ is nonsingular, its zero trace forces its signature to be $(1,3)$, and hence $\det M<0$; if it is singular, $\det M=0$.
Thus $\det M\leq0$.
Set
\[
a=\sqrt{b_{12}b_{34}},\qquad
b=\sqrt{b_{13}b_{24}},\qquad
c=\sqrt{b_{14}b_{23}}.
\]
A direct expansion gives
\[
\det M=\bigl(b^2-(a+c)^2\bigr)\bigl(b^2-(a-c)^2\bigr).
\]
Since $(a-c)^2\leq(a+c)^2$, the inequality $\det M\leq0$ is equivalent to
\[
(a-c)^2\leq b^2 \leq (a+c)^2.
\]
Thus $|a-c|\leq b \le a+c$.
The first inequality is exactly \eqref{eq:four-point-Ptolemy}.
\end{proof}

\subsection{A uniform estimate on moving intersection functions}

Let $(V,\cN,\cP,Q)$ be an ordered Hodge--Lorentz package of degree $d\geq2$.
Let $A_0,A_1,B_0,B_1\in\cN$ be arbitrary.
For $s,t\in\bR$, set
\[
A_s\coloneqq(1-s)A_0+sA_1,\qquad
B_t\coloneqq(1-t)B_0+tB_1,
\]
and for each $0\leq j\leq d$, define a two-variable polynomial function
\[
\Phi_j(s,t)\coloneqq Q\bigl(A_s^{[d-j]},B_t^{[j]}\bigr),
\]
where a superscript in square brackets denotes repetition of an argument.
By abuse of notation, we also call $\Phi_j$ the \emph{moving intersection function} of four vectors $A_0,A_1,B_0,B_1$.
The proposition below gives a uniform estimate on the mixed logarithmic derivative of $\Phi_i$ in terms of the neighboring moving intersection functions $\Phi_{i-1}$ and $\Phi_{i+1}$, under a comparison hypothesis on the four vectors.

\begin{proposition}\label{prop:Ptolemy-estimate-unit-square}
With notation as above, suppose that for some $K\geq1$,
\begin{equation}\label{eq:comparison-A0A1-B0B1}
K^{-1}A_0\preceq_{\cP}A_1\preceq_{\cP}KA_0,\qquad
K^{-1}B_0\preceq_{\cP}B_1\preceq_{\cP}KB_0.
\end{equation}
Fix $1\leq i\leq d-1$ and assume that $\Phi_i>0$ on $[0,1]^2$.
Then for every $(s,t)\in[0,1]^2$,
\begin{equation}\label{eq:uniform-Ptolemy-unit-square}
\bigl|\partial_s\partial_t\log\Phi_i(s,t)\bigr|
\leq 2i(d-i)K^4\frac{\sqrt{\Phi_{i-1}(s,t)\Phi_{i+1}(s,t)}}{\Phi_i(s,t)}.
\end{equation}
\end{proposition}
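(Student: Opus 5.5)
The plan is to compute $\partial_s\partial_t\log\Phi_i$ explicitly, recognize its numerator as a difference of two products of values of one frozen Lorentzian form, and bound that difference with the four-point Ptolemy inequality (\cref{lemma:Ptolemy}). The comparison hypothesis \eqref{eq:comparison-A0A1-B0B1} then converts the Ptolemy bound into neighbouring moving intersections.

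\emph{Step 1 (differentiation).} Fix $(s,t)\in[0,1]^2$ and write $A=A_s$, $B=B_t$, $A'=A_1-A_0$, $B'=B_1-B_0$. Since $\cN$ is a convex cone, $A,B\in\cN$. Freeze $\eta=(A^{[d-i-1]},B^{[i-1]})$, whose entries lie in $\cN$, and let $\sB(v,w)=Q(v,w,\eta)$. By \cref{def:ordered-HL}\ref{def:ordered-HL-ii}, $\sB$ has positive index at most one. Multilinearity gives
\[
\Phi_i=\sB(A,B),\quad \partial_s\Phi_i=(d-i)\sB(A',B),\quad \partial_t\Phi_i=i\,\sB(A,B'),\quad \partial_s\partial_t\Phi_i=i(d-i)\sB(A',B').
\]
Also $\Phi_{i-1}=\sB(A,A)$ and $\Phi_{i+1}=\sB(B,B)$. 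Hence
\[
\partial_s\partial_t\log\Phi_i=\frac{i(d-i)\,F(A',B')}{\Phi_i^2},\qquad F(P,R)\coloneqq\sB(A,B)\sB(P,R)-\sB(P,B)\sB(A,R).
\]
So it suffices to prove
\[
|F(A',B')|\le 2K^4\,\Phi_i\sqrt{\Phi_{i-1}\Phi_{i+1}}.
\]

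\emph{Step 2 (Ptolemy on each corner).} The form $F$ is bilinear, so $F(A',B')$ is a signed sum of the four terms $F(A_a,B_b)$ with $a,b\in\{0,1\}$. For each term apply \cref{lemma:Ptolemy} to $v_1=A$, $v_2=B$, $v_3=B_b$, $v_4=A_a$. All pairings $\sB(v_i,v_j)$ are nonnegative by \cref{def:ordered-HL}\ref{def:ordered-HL-i}, because every vector involved lies in $\cN\subseteq\cP$. Set
\[
x=\sqrt{\sB(A,B)\sB(A_a,B_b)},\qquad y=\sqrt{\sB(A_a,B)\sB(A,B_b)},\qquad z=\sqrt{\sB(A,A_a)\sB(B,B_b)}.
\]
The determinant expansion in the proof of \cref{lemma:Ptolemy} is symmetric in its three pairings. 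It therefore yields all three triangle inequalities, in particular $|x-y|\le z$. Consequently
\[
|F(A_a,B_b)|=|x^2-y^2|\le z\,(x+y).
\]

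\emph{Step 3 (comparisons).} From \eqref{eq:comparison-A0A1-B0B1} and convexity, $A_s\succeq_{\cP}(1-s)A_0+sK^{-1}A_0\succeq_{\cP}K^{-1}A_0$, and similarly for $A_1$ and for $B_t$. Thus $A_a\preceq_{\cP}KA$ and $B_b\preceq_{\cP}KB$. Nonnegativity \ref{def:ordered-HL-i}, applied with the remaining arguments frozen in $\cN$, makes each pairing monotone in these comparisons. This gives
\[
z\le K\sqrt{\Phi_{i-1}\Phi_{i+1}},\qquad x\le K\Phi_i,\qquad y\le K\Phi_i.
\]
So a crude sum over the four corners would give $8K^2$, which falls short of the stated $2K^4$ whenever $K^2<4$.

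\emph{Step 4 (sharpening the constant; main obstacle).} The real work is reducing the four-corner factor $8$ to $2$ at the price of two extra powers of $K$. The key observation is that $F(A,\cdot)=0$ and $F(\cdot,B)=0$. Since $A=(1-s)A_0+sA_1$, this gives $(1-s)F(A_0,R)=-sF(A_1,R)$. Hence
\[
F(A',R)=\frac{F(A_1,R)}{1-s}=-\frac{F(A_0,R)}{s},
\]
and the analogous identities hold in the $B$-variable.

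The first thing I would try is to avoid expanding $A'$ altogether. Rewrite $F(A',B')$ via these identities as a single term $F(P,R)$, where $P$ is a positive combination of $A_0,A_1$ and $R$ is a positive combination of $B_0,B_1$, with total weight controlled through $A'\preceq_{\cP}(K-1)A_0$ and $A_0\preceq_{\cP}KA$. Then apply Step 2 once to this $(P,R)$. In that single application, the factor $z$ picks up $K^2$ from the two comparisons, and $x+y\le 2K^2\Phi_i$. Together these give $|F(A',B')|\le 2K^4\,\Phi_i\sqrt{\Phi_{i-1}\Phi_{i+1}}$, which is the required bound.

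Checking that this weight bookkeeping closes with exactly $2K^4$, uniformly in $(s,t)$ including near the edges $s,t\in\{0,1\}$, is the step I expect to be most delicate. If it does not close, I would fall back to choosing the nearer endpoint, $A_1$ when $s\le\tfrac12$ and otherwise $A_0$, and similarly in $t$, and then redo the count.
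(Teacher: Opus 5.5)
\textbf{There is a genuine gap in Step 4, and the route you sketch there cannot deliver the constant $2K^4$.}

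Steps 1--3 are correct. The formula $\partial_s\partial_t\log\Phi_i=i(d-i)F(A',B')/\Phi_i^2$ is right, and the symmetric form of $\det M\le0$ gives $|x-y|\le z$. The four-corner expansion then yields the valid bound $8i(d-i)K^2\sqrt{\Phi_{i-1}\Phi_{i+1}}/\Phi_i$. That bound implies the proposition only when $K\ge2$, and Step 4 is supposed to cover $1\le K<2$.

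Your identities $F(A_s,\cdot)=0=F(\cdot,B_t)$ only let you change $A'$ by multiples of $A_s$. Using the hypotheses alone, the nonnegative combinations you reach are $A_1/(1-s)+\mu A_s$ or $-(A_0/s+\mu A_s)$ with $\mu\ge0$, and similarly in $t$. Now take $A_0=A_1$, $B_0=B_1$ (admissible for every $K\ge1$) and $s=t=\tfrac12$. Your recipe, including the nearer-endpoint fallback, gives $P=2A_1$ and $R=2B_1$. Then $z=2\sqrt{\Phi_{i-1}\Phi_{i+1}}$ and $x+y=4\Phi_i$. So the claimed bounds $z\le K^2\sqrt{\Phi_{i-1}\Phi_{i+1}}$ and $x+y\le 2K^2\Phi_i$ are false, and the chain produces the constant $8$, which exceeds $2K^4$ for $K<\sqrt2$. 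The weights $1/(1-s)$ and $1/(1-t)$ enter both $z$ and $x+y$, and no amount of bookkeeping removes them once Ptolemy is applied to a quadruple containing $A_s,B_t$.

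\textbf{The missing idea is to avoid expanding the numerator at all.} Note that
$F(A',B')=\det\begin{pmatrix}\sB(A_s,B_t)&\sB(A_s,B')\\ \sB(A',B_t)&\sB(A',B')\end{pmatrix}$.
The pairs $(A_s,A')$ and $(B_t,B')$ come from $(A_0,A_1)$ and $(B_0,B_1)$ via the matrices $\begin{pmatrix}1-s&s\\-1&1\end{pmatrix}$ and $\begin{pmatrix}1-t&-1\\t&1\end{pmatrix}$, both of determinant $1$. Hence
\[
F(A',B')=\sB(A_0,B_0)\sB(A_1,B_1)-\sB(A_0,B_1)\sB(A_1,B_0).
\]
Equivalently, a direct expansion gives $F(A_1,B_1)=(1-s)(1-t)F(A',B')$, consistent with your identities.

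Now apply \cref{lemma:Ptolemy} once, to the four corner classes $A_0,B_0,A_1,B_1$ themselves. Multiply by the sum of the two square roots. This bounds $|F(A',B')|$ by
\[
\sqrt{\sB(A_0,A_1)\sB(B_0,B_1)}\Bigl(\sqrt{\sB(A_0,B_0)\sB(A_1,B_1)}+\sqrt{\sB(A_0,B_1)\sB(A_1,B_0)}\Bigr).
\]
Every pairing of two corner classes costs exactly $K^2$, by applying $A_0,A_1\preceq_{\cP}KA_s$ and $B_0,B_1\preceq_{\cP}KB_t$ twice. For example, $\sB(A_0,A_1)\le K\sB(A_s,A_1)\le K^2\Phi_{i-1}$ and $\sB(A_a,B_b)\le K^2\Phi_i$. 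This gives $K^2\sqrt{\Phi_{i-1}\Phi_{i+1}}\cdot 2K^2\Phi_i$, i.e.\ the constant $2K^4$, uniformly in $(s,t)$ and with no edge weights.

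Your $8K^2$ bound is actually better when $K>2$. It would also suffice for \cref{thm:abstract-log-laplacian} with a different constant. It does not, however, prove the proposition as stated.
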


\begin{proof}
By the comparison hypotheses \eqref{eq:comparison-A0A1-B0B1} and convexity of $\cP$, we have for $s,t\in[0,1]$,
\begin{equation}\label{eq:comparison-AsA0A1-BtB0B1}
A_0,A_1\preceq_{\cP}KA_s,\qquad
B_0,B_1\preceq_{\cP}KB_t.
\end{equation}
Indeed,
\[
KA_s-A_0=(K-1)(1-s)A_0+s(KA_1-A_0)\in\cP,
\]
and the other three comparisons are identical.

Note that $\Phi_i$ is a polynomial in $s$ and $t$.
Hence positivity of $\Phi_i$ on $[0,1]^2$ implies that $\log\Phi_i$ is smooth on an open neighborhood of $[0,1]^2$.
Fix $(s,t)\in[0,1]^2$ so that $A_s,B_t\in\cN$.
Define a symmetric bilinear form on $V$ by
\[
\sB(v_1,v_2)\coloneqq Q\bigl(v_1,v_2,A_s^{[d-i-1]},B_t^{[i-1]}\bigr).
\]
The form $\sB$ has positive index at most one by \cref{def:ordered-HL}\ref{def:ordered-HL-ii}, and $\Phi_i(s,t)=\sB(A_s,B_t)$.
By multilinearity and taking partial derivatives of $\Phi_i$, we have
\[
\partial_s\Phi_i=(d-i)\sB(u,B_t),\qquad
\partial_t\Phi_i=i\sB(A_s,v),\qquad
\partial_s\partial_t\Phi_i=i(d-i)\sB(u,v),
\]
where $u\coloneqq A_1-A_0$ and $v\coloneqq B_1-B_0$.
It follows that
\begin{equation}\label{eq:log-hessian}
\partial_s\partial_t\log\Phi_i(s,t)
=i(d-i)\frac{\sB(A_s,B_t)\sB(u,v)-\sB(A_s,v)\sB(u,B_t)}{\sB(A_s,B_t)^2}.
\end{equation}
On the other hand, bilinearity of $\sB$ yields the following matrix identity:
\[
\begin{pmatrix}
\sB(A_s,B_t)&\sB(A_s,v)\\
\sB(u,B_t)&\sB(u,v)
\end{pmatrix}
=\begin{pmatrix}
1-s&s\\
-1&1
\end{pmatrix}
\begin{pmatrix}
\sB(A_0,B_0)&\sB(A_0,B_1)\\
\sB(A_1,B_0)&\sB(A_1,B_1)
\end{pmatrix}
\begin{pmatrix}
1-t&-1\\
t&1
\end{pmatrix}.
\]
Taking determinants and substituting back to \eqref{eq:log-hessian}, we obtain
\begin{equation}\label{eq:log-hessian-2}
\partial_s\partial_t\log\Phi_i(s,t)=
i(d-i)\frac{\sB(A_0,B_0)\sB(A_1,B_1)-\sB(A_0,B_1)\sB(A_1,B_0)}{\sB(A_s,B_t)^2}.
\end{equation}

All pairings among $A_0,B_0,A_1,B_1$ under $\sB$ are nonnegative by \cref{def:ordered-HL}\ref{def:ordered-HL-i}.
Applying the four-point Ptolemy inequality, \cref{lemma:Ptolemy}, to $A_0,B_0,A_1,B_1$, and then multiplying by the sum of the two square roots in the left-hand side, we get
\begin{align}\label{eq:Ptolemy-A01B01}
\begin{split}
&\qquad\qquad\bigl|
\sB(A_0,B_0)\sB(A_1,B_1)-\sB(A_0,B_1)\sB(A_1,B_0)\bigr|\\
&\leq\sqrt{\sB(A_0,A_1)\sB(B_0,B_1)}\,\Bigl(\sqrt{\sB(A_0,B_0)\sB(A_1,B_1)}+\sqrt{\sB(A_0,B_1)\sB(A_1,B_0)}\Bigr).
\end{split}
\end{align}
Use the comparison \eqref{eq:comparison-AsA0A1-BtB0B1} to bound the right-hand side of \eqref{eq:Ptolemy-A01B01} as follows:
\begin{equation}\label{eq:AsBt-comparison}
\begin{aligned}
\sB(A_0,A_1) &\le K \sB(A_s,A_1) \le K^2 \sB(A_s,A_s) = K^2 \Phi_{i-1}(s,t),\\[2pt]
\sB(B_0,B_1) &\le K \sB(B_t,B_1) \le K^2 \sB(B_t,B_t) = K^2 \Phi_{i+1}(s,t),\\[2pt]
\sB(A_j,B_k) &\le K^2 \sB(A_s,B_t) = K^2 \Phi_i(s,t),\qquad
j,k\in \{0,1\}.
\end{aligned}
\end{equation}
Substituting the estimates \eqref{eq:Ptolemy-A01B01} and \eqref{eq:AsBt-comparison} into \eqref{eq:log-hessian-2}, we prove \eqref{eq:uniform-Ptolemy-unit-square}.
\end{proof}

\begin{remark}
The estimate only requires the four classes to belong to the formal nef cone $\cN$.
In dynamical applications, these classes are also big so that mutual comparison \eqref{eq:comparison-A0A1-B0B1} in the pseudo-effective order and positivity of $\Phi_i$ on $[0,1]^2$ hold simultaneously.
\end{remark}

\subsection{An abstract estimate for the discrete logarithmic Laplacian}
\label{sec:uniformity}

The following theorem formulates the unit-square argument in a form common to both dynamical settings.
The adjacent comparison hypothesis \ref{thm:abstract-log-laplacian-comparison} controls the moving intersections uniformly, while the two-time shift hypothesis \ref{thm:abstract-log-laplacian-two-time-shift} identifies the integrated mixed derivative with the temporal discrete logarithmic Laplacian of the sequence.
Both hypotheses are satisfied by the finite-time orbit realizations constructed in \S\ref{sec:orbit-realizations}.

\begin{theorem}\label{thm:abstract-log-laplacian}
Fix integers $d,N\geq2$.
Let $(V,\cN,\cP,Q)$ be an ordered Hodge--Lorentz package of degree $d\geq2$, and let $\eta_0,\ldots,\eta_N\in\cN$.
Suppose that there exist constants $K\geq1$ and $\lambda_d>0$, and positive numbers
\[
\delta_j(m)>0,\qquad
0\leq j\leq d,\ 0\leq m\leq N,
\]
such that the following hypotheses hold.

\begin{enumerate}[label=\textnormal{(\roman*)}]
\item\label{thm:abstract-log-laplacian-comparison}
For every $0\leq m<N$,
\[
K^{-1}\eta_m\preceq_{\cP}\eta_{m+1}\preceq_{\cP} K\eta_m.
\]

\item\label{thm:abstract-log-laplacian-two-time-shift}
For every $0\leq m\leq\ell\leq N$ and $0\leq j\leq d$,
\[
Q\bigl(\eta_m^{[d-j]},\eta_\ell^{[j]}\bigr)=\lambda_d^m\delta_j(\ell-m).
\]
\end{enumerate}
Then for every $1\leq i\leq d-1$ and $1\leq n\leq N-1$,
\begin{equation}\label{eq:abstract-log-laplacian}
\left|\log\frac{\delta_i(n)^2}{\delta_i(n-1)\delta_i(n+1)}\right|
\leq 2i(d-i)K^{2d+4}\frac{\sqrt{\delta_{i-1}(n)\delta_{i+1}(n)}}{\delta_i(n)}.
\end{equation}
\end{theorem}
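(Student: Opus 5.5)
We apply \cref{prop:Ptolemy-estimate-unit-square} to the four classes
\[
A_0=\eta_{m},\quad A_1=\eta_{m+1},\quad B_0=\eta_{\ell},\quad B_1=\eta_{\ell+1},
\]
with $m=0$ and $\ell=n-1$ chosen so that the corners of the unit square recover the four degrees appearing in the Laplacian.

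\textbf{Step 1 (corners).} By the two-time shift hypothesis \ref{thm:abstract-log-laplacian-two-time-shift}, the corner values of $\Phi_i$ are
\[
\Phi_i(0,0)=\delta_i(n-1),\quad \Phi_i(0,1)=\delta_i(n),\quad \Phi_i(1,1)=\lambda_d\,\delta_i(n-1),\quad \Phi_i(1,0)=\lambda_d\,\delta_i(n-2).
\]
The corner $(1,0)$ involves the pair $(\eta_1,\eta_{n-1})$, which gives the wrong combination. To fix this, take $B_0=\eta_n$ and $B_1=\eta_{n+1}$ instead, keeping $A_0=\eta_0$ and $A_1=\eta_1$. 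Then
\[
\Phi_i(0,0)=\delta_i(n),\quad \Phi_i(0,1)=\delta_i(n+1),\quad \Phi_i(1,0)=\lambda_d\,\delta_i(n-1),\quad \Phi_i(1,1)=\lambda_d\,\delta_i(n),
\]
since $1\le n\le N-1$ guarantees $m\le\ell$ in every pairing. Integrating the mixed derivative over the square therefore gives
\[
\int_0^1\!\!\int_0^1\partial_s\partial_t\log\Phi_i\,ds\,dt=\log\frac{\Phi_i(1,1)\Phi_i(0,0)}{\Phi_i(1,0)\Phi_i(0,1)}=\log\frac{\delta_i(n)^2}{\delta_i(n-1)\delta_i(n+1)},
\]
and the factors of $\lambda_d$ cancel.

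\textbf{Step 2 (comparison and positivity).} Hypothesis \ref{thm:abstract-log-laplacian-comparison} gives the comparison \eqref{eq:comparison-A0A1-B0B1} for both pairs with the same $K$. The proposition also needs $\Phi_i>0$ on $[0,1]^2$. By \eqref{eq:comparison-AsA0A1-BtB0B1} we have $A_0\preceq_\cP KA_s$ and $B_0\preceq_\cP KB_t$. Swapping arguments one at a time using nonnegativity \ref{def:ordered-HL-i} (every other slot lies in $\cN$, since $A_s,B_t\in\cN$ by convexity), we get
\[
\Phi_j(s,t)\ge K^{-d}\,Q\bigl(A_0^{[d-j]},B_0^{[j]}\bigr)=K^{-d}\delta_j(n)>0.
\]
The same swapping in the other direction gives the upper bound $\Phi_j(s,t)\le K^{d}\,\delta_j(n)$, by comparing $A_s\preceq KA_0$ (note $A_1\preceq KA_0$) and $B_t\preceq KB_0$.

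\textbf{Step 3 (bound).} \Cref{prop:Ptolemy-estimate-unit-square} bounds the integrand by $2i(d-i)K^4\sqrt{\Phi_{i-1}\Phi_{i+1}}/\Phi_i$. Inserting the two-sided bounds from Step 2 — $K^d$ for each of $\Phi_{i\pm1}$, whose square root gives $K^d$, and $K^d$ from $\Phi_i^{-1}$ — yields the pointwise bound
\[
2i(d-i)K^{2d+4}\frac{\sqrt{\delta_{i-1}(n)\delta_{i+1}(n)}}{\delta_i(n)}.
\]
Integrating over the unit square gives \eqref{eq:abstract-log-laplacian}.

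\textbf{Main obstacle.} The delicate step is the bookkeeping in Step 2. Each swap of an argument requires the remaining slots to lie in $\cN$ and the difference to lie in $\cP$. This holds because $A_s,B_t,A_0,B_0\in\cN$ and the differences $KA_s-A_0$, $KA_0-A_s$ (and the analogous ones for $B$) lie in $\cP$. The exponent count $2d+4$ then matches the statement.
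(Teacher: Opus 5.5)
Your proposal is correct and, after the discarded first attempt, is essentially the paper's proof. You apply \cref{prop:Ptolemy-estimate-unit-square} to $\eta_0,\eta_1,\eta_n,\eta_{n+1}$, obtain $K^{-d}\delta_j(n)\le\Phi_j(s,t)\le K^{d}\delta_j(n)$ by swapping slots one at a time via $K^{-1}\eta_0\preceq_{\cP}\eta_s\preceq_{\cP}K\eta_0$ and $K^{-1}\eta_n\preceq_{\cP}\eta_{n+t}\preceq_{\cP}K\eta_n$, read off the four corners from the two-time shift, and integrate the pointwise bound $2i(d-i)K^{2d+4}\sqrt{\delta_{i-1}(n)\delta_{i+1}(n)}/\delta_i(n)$ over the unit square. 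In a final write-up, start directly with $B_0=\eta_n$, $B_1=\eta_{n+1}$; the abandoned choice $B_0=\eta_{n-1}$ would only give the Laplacian at $n-1$, and it is undefined when $n=1$.
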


\begin{proof}
Fix $1\leq i\leq d-1$ and $1\leq n\leq N-1$.
We apply \cref{prop:Ptolemy-estimate-unit-square} to
\[
\eta_0,\eta_1,\eta_n,\eta_{n+1}\in\cN.
\]
For $s,t\in[0,1]$, set
\[
\eta_s\coloneqq(1-s)\eta_0+s\eta_1,\qquad
\eta_{n+t}\coloneqq(1-t)\eta_n+t\eta_{n+1},
\]
and for $0\leq j\leq d$,
\[
\Phi_j(s,t)\coloneqq Q\bigl(\eta_s^{[d-j]},\eta_{n+t}^{[j]}\bigr).
\]

The hypothesis~\ref{thm:abstract-log-laplacian-comparison}, applied at $m=0$ and $m=n$, verifies \eqref{eq:comparison-A0A1-B0B1} and implies
\[
K^{-1}\eta_0\preceq_{\cP} \eta_s \preceq_{\cP} K\eta_0,\qquad
K^{-1}\eta_n \preceq_{\cP} \eta_{n+t} \preceq_{\cP} K\eta_n.
\]
By multilinearity, the nonnegativity condition in \cref{def:ordered-HL}\ref{def:ordered-HL-i}, and the hypothesis~\ref{thm:abstract-log-laplacian-two-time-shift} here, we have for every $0\leq j\leq d$ and $(s,t)\in[0,1]^2$,
\begin{equation}\label{eq:abstract-moving-degree-comparison}
K^{-d}\delta_j(n)\leq \Phi_j(s,t)\leq K^d\delta_j(n).
\end{equation}
In particular, $\Phi_i$ is positive on $[0,1]^2$.

Applying \cref{prop:Ptolemy-estimate-unit-square} and then using \eqref{eq:abstract-moving-degree-comparison}, we obtain
\begin{equation}\label{eq:abstract-uniform-unit-square}
\begin{aligned}
\bigl|\partial_s\partial_t\log\Phi_i(s,t)\bigr|
&\leq 2i(d-i)K^4 \frac{\sqrt{\Phi_{i-1}(s,t)\Phi_{i+1}(s,t)}}{\Phi_i(s,t)}\\
&\leq 2i(d-i)K^{2d+4} \frac{\sqrt{\delta_{i-1}(n)\delta_{i+1}(n)}}{\delta_i(n)}.
\end{aligned}
\end{equation}
By hypothesis~\ref{thm:abstract-log-laplacian-two-time-shift}, the four corner values are
\[
\begin{aligned}
\Phi_i(0,0)&=\delta_i(n), &\quad\Phi_i(1,0)&=\lambda_d\delta_i(n-1),\\
\Phi_i(0,1)&=\delta_i(n+1), & \quad\Phi_i(1,1)&=\lambda_d\delta_i(n).
\end{aligned}
\]
Hence the fundamental theorem of calculus yields
\[
\begin{aligned}
\int_0^1\!\int_0^1\partial_s\partial_t\log\Phi_i(s,t)\diff s\diff t
&=\log\frac{\Phi_i(1,1)\Phi_i(0,0)}{\Phi_i(1,0)\Phi_i(0,1)}\\
&=\log\frac{\delta_i(n)^2}{\delta_i(n-1)\delta_i(n+1)}.
\end{aligned}
\]
By integrating \eqref{eq:abstract-uniform-unit-square} and the triangle inequality, we prove \eqref{eq:abstract-log-laplacian}.
\end{proof}

\section{Finite-time orbit realizations}
\label{sec:orbit-realizations}

For each finite-time orbit segment, we place all time coordinates on one geometric space.
In both settings, the coordinate pullbacks satisfy an orbit-uniform adjacent comparison and a two-time shift identity; \cref{thm:degree-log-laplacian} then follows from \cref{thm:abstract-log-laplacian}.
The projective construction uses normalization, while the compact K\"ahler construction requires the resolution of singularities.

\subsection{The projective realization}
\label{sec:degrees-orbits}

Throughout this subsection, we work over an algebraically closed field $\bk$ of arbitrary characteristic.
A variety is integral, separated, and of finite type over $\bk$.
For a divisor, it means an $\bR$-Cartier divisor.
We use standard facts from intersection theory and divisor positivity; see \cite{Fulton98,Lazarsfeld-I}.

For a normal projective variety $Y$, let
\[
\Nef^1(Y)\subseteq \Psef^1(Y)\subseteq\N^1(Y)_\bR
\]
denote the nef cone and pseudo-effective cone of divisors on $Y$.
Their interiors are the ample cone and big cone, respectively.
For divisor classes $D_1,D_2$ on $Y$, write
\[
D_1\preceq_Y D_2 \quad\Longleftrightarrow\quad D_2-D_1\in\Psef^1(Y).
\]

We will repeatedly use the following standard fact.

\begin{lemma}\label{lemma:projective-pullback-positivity}
Let $\varphi\colon Y\to Z$ be a surjective morphism of normal projective varieties.
Then pullback preserves pseudo-effective and nef divisor classes.
If, moreover, $\varphi$ is generically finite, then it also preserves nef and big divisor classes.
\end{lemma}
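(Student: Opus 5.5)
The plan is to handle the three assertions in order, taking the pseudo-effective and nef statements first and then the nef and big statement; only standard positivity facts are needed.

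\emph{Pseudo-effective classes.} Since $\varphi$ is surjective, no irreducible component of $\varphi^{-1}(\operatorname{Supp} D)$ equals $Y$. Hence the pullback of an effective Cartier divisor $D$ on $Z$ is again an effective Cartier divisor on $Y$. The same holds for effective $\bR$-Cartier divisors, by linearity over positive real combinations. Pullback $\varphi^*\colon\N^1(Z)_\bR\to\N^1(Y)_\bR$ is linear and continuous, because it is compatible with the projection formula on curves. So it maps the closure of the effective cone into the closure of the effective cone, and $\Psef^1(Z)$ goes into $\Psef^1(Y)$.

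\emph{Nef classes.} For an irreducible curve $C\subseteq Y$, the projection formula gives $(\varphi^*D\cdot C)=(D\cdot\varphi_*C)$. If $C$ is contracted, then $\varphi_*C=0$ and the intersection number is $0$. Otherwise $\varphi_*C=\deg(C\to\varphi(C))\,[\varphi(C)]$ is an effective curve class, so the number is $\geq0$ whenever $D$ is nef.

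\emph{Nef and big classes.} Assume in addition that $\varphi$ is generically finite, so $\dim Y=\dim Z=d$ and $\deg\varphi\geq1$. For a nef class $D$, bigness is equivalent to $(D^d)>0$. This is the criterion $\vol(D)=(D^d)$ for nef $\bR$-divisors, which holds in arbitrary characteristic, for instance via the nef-and-big characterization $D=A+E$ and continuity of the volume. The projection formula for top intersections under a generically finite morphism gives $((\varphi^*D)^d)_Y=\deg\varphi\,(D^d)_Z>0$. Since $\varphi^*D$ is nef by the previous step, it is therefore big.

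\emph{Main obstacle.} The only delicate point is this bigness step in the generality of normal varieties, $\bR$-Cartier classes, and arbitrary characteristic. I would rely on the projection formula from \cite{Fulton98} and the volume criterion from \cite{Lazarsfeld-I}, whose proofs are characteristic free. A more direct alternative avoids volumes. Write $D$, which is nef and big, as $D\equiv A+E$ with $A$ ample and $E$ effective. Then $\varphi^*A$ is nef with $((\varphi^*A)^d)>0$, and $\varphi^*D=\varphi^*A+\varphi^*E$ with $\varphi^*E$ effective, so bigness of $\varphi^*A$ suffices. Bigness of $\varphi^*A$ itself follows from Riemann--Roch growth: $h^0(Y,m\varphi^*A)\geq h^0(Z,mA)\sim m^d(A^d)/d!$. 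Here the inequality uses $H^0(Z,mA)\hookrightarrow H^0(Y,m\varphi^*A)$, which is injective because $\varphi$ is dominant, and the growth of $h^0(Z,mA)$ for $A$ ample holds in any characteristic.
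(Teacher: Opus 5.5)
Your proof is correct. The paper records this lemma as a standard fact without proof, but your three steps are the projective counterpart of its proof of the K\"ahler analogue, \cref{lemma:kahler-pullback-positivity}: pull back effective objects, check nefness via the projection formula, then combine $((\varphi^*D)^d)_Y=\deg\varphi\,(D^d)_Z>0$ with the criterion that a nef class of positive top self-intersection is big, which replaces \cite[Theorem~0.5]{DP04} used there. Your alternative route is also valid: write $\varphi^*D=\epsilon\,\varphi^*H+\varphi^*(D-\epsilon H)$ with $H$ an ample Cartier divisor and $D-\epsilon H\in\Psef^1(Z)$, and use $h^0(Y,m\varphi^*H)\geq h^0(Z,mH)$. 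It avoids the volume criterion and even shows that generically finite surjective pullback preserves all big classes, nef or not.
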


We work in \cref{setting:projective}.
Recall that for $n\geq1$, $\widetilde\Gamma_n$ denotes the normalization of the graph closure of $f^n$.
The two induced morphisms $p_n,q_n\colon\widetilde\Gamma_n\to X$ are surjective and generically finite, so $p_n^*\omega$ and $q_n^*\omega$ are nef and big on $\widetilde\Gamma_n$.
For $0\leq i\leq d$, the $i$-th degree of $f^n$ with respect to $\omega$ is defined by
\[
\deg_{i,\omega}(f^n)\coloneqq \bigl((p_n^*\omega)^{d-i}\cdot(q_n^*\omega)^i\bigr)_{\widetilde\Gamma_n},\qquad
\deg_{i,\omega}(f^0)\coloneqq \vol_X(\omega)=(\omega^d)_X.
\]
Once $\omega$ is fixed, we suppress $\omega$ from the notation and write
\[
\deg_i(f^n)\coloneqq\deg_{i,\omega}(f^n).
\]
By the Khovanskii--Teissier inequality \cite[Example~1.6.4 and Remark~1.6.5]{Lazarsfeld-I}, the degree sequences are log concave in the sense that for every fixed $n\geq1$,
\begin{equation}\label{eq:projective-degree-log-concavity}
\deg_i(f^n)^2\geq\deg_{i-1}(f^n)\deg_{i+1}(f^n),\qquad 1\leq i\leq d-1.
\end{equation}
Taking $n$-th roots in \eqref{eq:projective-degree-log-concavity} and the limits yields the log-concavity \eqref{eq:dyn-degree-log-concave} of dynamical degrees $\lambda_i(f)$.
Recall that by \cite{Truong20,Dang20,JL23}, the limits \eqref{eq:intro-dynamical-degree} defining the $\lambda_i(f)$ exist.
In particular, one has $\lambda_0(f)=1$ and
\[
\lambda_d(f)=\deg(f)\coloneqq [\bk(X):f^*\bk(X)].
\]

For each fixed $N\geq1$, we shall construct a normalized finite-time orbit variety $Y_N$.
Choose a dense open subset $U_N\subseteq X$ on which $f,f^2,\ldots,f^N$ are all well defined.
Let
\[
\iota_N\colon U_N\longrightarrow X^{N+1},\qquad
x\longmapsto(x,f(x),\ldots,f^N(x)),
\]
and define
\begin{equation}\label{eq:projective-finite-orbit-space}
Y_N\coloneqq\bigl(\overline{\iota_N(U_N)}\bigr)^\nu
\end{equation}
to be the normalization of the closure of the image of $\iota_N$.
The closure is unchanged after shrinking $U_N$, so $Y_N$ is independent of this choice.
It is a normal projective variety of dimension $d$, which plays a similar role as the iterated graph $\Gamma_{[N]}$ in \cite{DS08}.

For $0\leq m\leq N$, each coordinate projection\footnote{Throughout, the projection from various product spaces to its first coordinate is indexed by $0$.} from $X^{N+1}$ to $X$ induces a morphism
\[
\pi_{m,N}\colon Y_N\longrightarrow X.
\]
Note that $\pi_{0,N}$ is birational and $\pi_{m,N}=f^m\circ\pi_{0,N}$ as rational maps.
In particular, each $\pi_{m,N}$ is surjective and generically finite with degree
\[
\deg(\pi_{m,N})=\deg(f^m)=\lambda_d(f)^m.
\]
For $0\leq m\leq N$, define
\begin{equation}\label{eq:projective-omega-mN}
\omega_{m,N}\coloneqq\pi_{m,N}^*\omega\in\N^1(Y_N)_\bR.
\end{equation}
Every $\omega_{m,N}$ is nef and big on $Y_N$ by \cref{lemma:projective-pullback-positivity}.

The following two-time shift identity can be viewed as the two-time specialization of the multigraph pushforward formula used by Dinh--Sibony in the proof of \cite[Proposition~5.2, first inline formula on p.~41]{DS08}.
It verifies the condition~\ref{thm:abstract-log-laplacian-two-time-shift} of \cref{thm:abstract-log-laplacian} in the projective realization.

\begin{lemma}\label{lemma:projective-two-time-shift}
With notation as above, for every $0\leq i\leq d$ and $0\leq m\leq\ell\leq N$,
\begin{equation}\label{eq:projective-two-time-shift}
\bigl(\omega_{m,N}^{d-i}\cdot \omega_{\ell,N}^i\bigr)_{Y_N}=\lambda_d(f)^m\deg_i(f^{\ell-m}).
\end{equation}
\end{lemma}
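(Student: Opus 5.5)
We prove \eqref{eq:projective-two-time-shift} via the projection formula applied to the two-coordinate projection of $Y_N$. Fix $0\leq m\leq\ell\leq N$. The coordinate projections $(\pi_{m,N},\pi_{\ell,N})\colon Y_N\to X\times X$ have image the closure of $\{(f^m(x),f^{\ell}(x)) : x\in U_N\}$. Since $f^m$ is dominant, this equals the closure of $\{(y,f^{\ell-m}(y))\}$ over a dense open set of $y$, that is, the graph closure $\Gamma_{\ell-m}$ (the diagonal when $\ell=m$). The morphism $Y_N\to\Gamma_{\ell-m}$ is dominant from a normal variety, so it lifts through the normalization $\widetilde\Gamma_{\ell-m}$. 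This gives a surjective, generically finite morphism
\[
\rho\colon Y_N\longrightarrow\widetilde\Gamma_{\ell-m},\qquad p_{\ell-m}\circ\rho=\pi_{m,N},\quad q_{\ell-m}\circ\rho=\pi_{\ell,N}.
\]
For $\ell=m$, we instead use $\rho=\pi_{m,N}\colon Y_N\to X$ together with the convention $\deg_i(f^0)=(\omega^d)_X$.

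Next we compute $\deg(\rho)$ by multiplicativity of degrees. The map $p_{\ell-m}\colon\widetilde\Gamma_{\ell-m}\to X$ is birational, because $\Gamma_{\ell-m}$ is a graph over its first factor. Hence
\[
\deg(\rho)=\deg(\pi_{m,N})/\deg(p_{\ell-m})=\deg(f^m)=\lambda_d(f)^m,
\]
where the value of $\deg(\pi_{m,N})$ was recorded above (it is $\deg(f^m)$ since $\pi_{0,N}$ is birational). The projection formula for intersections of pulled-back Cartier divisor classes under a generically finite surjective morphism of complete varieties of the same dimension then gives
\[
\bigl(\omega_{m,N}^{d-i}\cdot\omega_{\ell,N}^{i}\bigr)_{Y_N}=\deg(\rho)\,\bigl((p_{\ell-m}^*\omega)^{d-i}\cdot(q_{\ell-m}^*\omega)^i\bigr)_{\widetilde\Gamma_{\ell-m}}=\lambda_d(f)^m\deg_i(f^{\ell-m}).
\]
This formula holds for $\bR$-Cartier classes by multilinearity.

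The main point requiring care is the identification of the image of $Y_N$ in $X\times X$ with $\Gamma_{\ell-m}$, and the resulting factorization through the normalization. On $U_N$ we have $f^\ell=f^{\ell-m}\circ f^m$. We may shrink $U_N$ so that $f^m(U_N)$ lies in the domain of definition of $f^{\ell-m}$; this is possible since $f^m$ is dominant and the closure of $\iota_N(U_N)$ is insensitive to shrinking. Then the image of $U_N$ is a dense subset of the graph of $f^{\ell-m}$ over the dense constructible set $f^m(U_N)$. Both closures are irreducible of dimension $d$, so they coincide. The lift to the normalization is the universal property of normalization for dominant morphisms from normal varieties. The degree count also uses that the field extension degree of $\pi_{m,N}$ equals $[\bk(X):(f^m)^*\bk(X)]$, which includes any inseparable part, consistent with $\lambda_d(f)=\deg(f)$ in arbitrary characteristic.
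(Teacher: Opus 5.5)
Your proposal is correct and follows the paper's proof essentially verbatim: you treat $\ell=m$ by the projection formula for $\pi_{m,N}$, and for $m<\ell$ you identify the image of $(\pi_{m,N},\pi_{\ell,N})$ with $\Gamma_{\ell-m}$, lift through the normalization $\widetilde\Gamma_{\ell-m}$, get $\deg(\rho)=\lambda_d(f)^m$ from the birationality of $p_{\ell-m}$, and conclude by the projection formula. Your extra care in shrinking $U_N$ so that $f^\ell=f^{\ell-m}\circ f^m$ holds pointwise is a harmless refinement of the paper's density argument.
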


\begin{proof}
Fix $0\leq i\leq d$ and $0\leq m\leq\ell\leq N$.
Set $k=\ell-m$.
If $m=\ell$, then the projection formula yields
\[
\bigl(\omega_{m,N}^d\bigr)_{Y_N}=\deg(\pi_{m,N})(\omega^d)_X=\lambda_d(f)^m\deg_i(f^0),
\]
so \eqref{eq:projective-two-time-shift} holds.
We thus assume that $m<\ell$.
The image of the two-coordinate morphism
\[
(\pi_{m,N},\pi_{\ell,N})\colon Y_N\longrightarrow X\times X
\]
is the graph closure $\Gamma_k$ of $f^k$.
Indeed, over a dense open subset $U_N \subseteq X$, its image consists of
\[
\bigl(f^m(x),f^\ell(x)\bigr)=\bigl(x_m,f^k(x_m)\bigr),\qquad
x_m\coloneqq f^m(x).
\]
Since $f^m$ is dominant, these points are dense in $\Gamma_k$.
We thus think of $(\pi_{m,N},\pi_{\ell,N})$ as a surjective generically finite morphism from $Y_N$ to $\Gamma_k$.
Because $Y_N$ is normal, by the universal property of normalization, the morphism $(\pi_{m,N},\pi_{\ell,N})$ factors uniquely through the normalization $\widetilde\Gamma_k$ as a surjective generically finite morphism
\[
\varphi_{m,\ell}^{(N)}\colon Y_N\longrightarrow\widetilde\Gamma_k
\]
with
\[
\pi_{m,N}=p_k\circ\varphi_{m,\ell}^{(N)},\qquad
\pi_{\ell,N}=q_k\circ\varphi_{m,\ell}^{(N)}.
\]
In other words, we have the following commutative diagram
\begin{equation}\label{eq:projective-two-time-shift-diagram}
\begin{tikzcd}[column sep=5.2em, row sep=4.2em]
& Y_N \arrow[dl, swap, "\pi_{m,N}"] \arrow[d, "\varphi_{m,\ell}^{(N)}"'] \arrow[dr, "\pi_{\ell,N}"] & \\
X & \widetilde\Gamma_k \arrow[l, "p_k"'] \arrow[r, "q_k"] & X.
\end{tikzcd}
\end{equation}

Because $p_k$ is birational, $\deg\bigl(\varphi_{m,\ell}^{(N)}\bigr)=\deg(\pi_{m,N})=\lambda_d(f)^m$.
The projection formula therefore yields
\begin{align*}
\bigl(\omega_{m,N}^{d-i}\cdot \omega_{\ell,N}^i\bigr)_{Y_N}=
\deg\bigl(\varphi_{m,\ell}^{(N)}\bigr)\bigl((p_k^*\omega)^{d-i}\cdot(q_k^*\omega)^i\bigr)_{\widetilde\Gamma_k}
=\lambda_d(f)^m\deg_i(f^k),
\end{align*}
which proves \eqref{eq:projective-two-time-shift}.
\end{proof}

The following adjacent comparison of nef and big classes verifies the condition~\ref{thm:abstract-log-laplacian-comparison} of \cref{thm:abstract-log-laplacian} in the projective realization.

\begin{lemma}\label{lemma:projective-adjacent-comparison}
With notation as above, there is a constant $K_{f,\omega}\geq1$ such that for every $N\geq1$ and $0\leq m<N$,
\begin{equation}\label{eq:projective-adjacent-comparison}
K_{f,\omega}^{-1}\,\omega_{m,N}\preceq_{Y_N}\omega_{m+1,N}\preceq_{Y_N}K_{f,\omega}\,\omega_{m,N}.
\end{equation}
\end{lemma}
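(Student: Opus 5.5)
The plan is to reduce the comparison on the varying spaces $Y_N$ to a single comparison on the fixed space $\widetilde\Gamma_1$, and then pull it back. This is the same factorization used in \cref{lemma:projective-two-time-shift}, now applied with $\ell=m+1$.

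\textbf{Step 1: a fixed comparison on $\widetilde\Gamma_1$.} On $\widetilde\Gamma_1$, the normalization of the graph closure of $f$, both classes $p_1^*\omega$ and $q_1^*\omega$ are nef and big by \cref{lemma:projective-pullback-positivity}. The big cone is the interior of $\Psef^1(\widetilde\Gamma_1)$ in the finite-dimensional space $\N^1(\widetilde\Gamma_1)_\bR$. Since $p_1^*\omega$ lies in this open cone, there is $\epsilon>0$ with $p_1^*\omega-\epsilon\,q_1^*\omega$ big, hence pseudo-effective. Symmetrically, after shrinking $\epsilon$, the class $q_1^*\omega-\epsilon\,p_1^*\omega$ is also pseudo-effective. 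Setting $K_{f,\omega}\coloneqq\epsilon^{-1}\geq1$ gives
\[
K_{f,\omega}^{-1}\,p_1^*\omega\preceq_{\widetilde\Gamma_1}q_1^*\omega\preceq_{\widetilde\Gamma_1}K_{f,\omega}\,p_1^*\omega .
\]
This constant depends only on $f$ and $\omega$.

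\textbf{Step 2: factor through $\widetilde\Gamma_1$.} For any $N\geq1$ and $0\leq m<N$, apply the argument of \cref{lemma:projective-two-time-shift} with $\ell=m+1$ and $k=1$. The morphism $(\pi_{m,N},\pi_{m+1,N})\colon Y_N\to X\times X$ has image $\Gamma_1$. By normality of $Y_N$, it factors through a surjective, generically finite morphism
\[
\varphi\coloneqq\varphi^{(N)}_{m,m+1}\colon Y_N\to\widetilde\Gamma_1,
\qquad \pi_{m,N}=p_1\circ\varphi,\quad \pi_{m+1,N}=q_1\circ\varphi .
\]
Hence $\omega_{m,N}=\varphi^*p_1^*\omega$ and $\omega_{m+1,N}=\varphi^*q_1^*\omega$.

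\textbf{Step 3: pull back the comparison.} By \cref{lemma:projective-pullback-positivity}, $\varphi^*$ maps $\Psef^1(\widetilde\Gamma_1)$ into $\Psef^1(Y_N)$. Pulling back the two pseudo-effective differences from Step 1 yields exactly \eqref{eq:projective-adjacent-comparison}, with the same constant $K_{f,\omega}$ for all $N$ and $m$.

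The only point needing care is Step 1: one must use that bigness of an $\bR$-Cartier class on a normal projective variety is an open condition in $\N^1_\bR$, with the big cone equal to the interior of the pseudo-effective cone. This is the standard fact quoted before \cref{lemma:projective-pullback-positivity}.

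The key structural observation is the uniformity: every adjacent pair of coordinates on every $Y_N$ factors through the same space $\widetilde\Gamma_1$. This is why $K_{f,\omega}$ is independent of $N$ and $m$.
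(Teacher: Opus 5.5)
Your proposal is correct and follows the paper's proof essentially step for step. The paper likewise obtains a single comparison on $Y_1=\widetilde\Gamma_1$ from the fact that the big cone is the interior of the pseudo-effective cone, factors each adjacent-coordinate map $(\pi_{m,N},\pi_{m+1,N})$ through $\widetilde\Gamma_1$ via normality of $Y_N$, and pulls the comparison back using \cref{lemma:projective-pullback-positivity}.
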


\begin{proof}
On $Y_1=\widetilde\Gamma_1$, one has
\[
\pi_{0,1}=p_1,\qquad
\pi_{1,1}=q_1.
\]
The classes $\omega_{0,1}$ and $\omega_{1,1}$ are big.
Since the big cone is the interior of the pseudo-effective cone, there is $K_{f,\omega}\geq1$, depending only on $\omega_{0,1}$ and $\omega_{1,1}$, such that
\begin{equation}\label{eq:projective-adjacent-comparison-Y1}
K_{f,\omega}^{-1}\,\omega_{0,1}\preceq_{Y_1}\omega_{1,1}\preceq_{Y_1}K_{f,\omega}\,\omega_{0,1}.
\end{equation}
Fix $N\geq1$ and $0\leq m<N$.
The adjacent-coordinate projection $(\pi_{m,N},\pi_{m+1,N})$ factors uniquely through a surjective generically finite morphism
\[
\varphi_{m,m+1}^{(N)}\colon Y_N\longrightarrow Y_1=\widetilde\Gamma_1
\]
satisfying that $\pi_{m,N}=p_1\circ\varphi_{m,m+1}^{(N)}$ and $\pi_{m+1,N}=q_1\circ\varphi_{m,m+1}^{(N)}$; see also the commutative diagram \eqref{eq:projective-two-time-shift-diagram} with $\ell=m+1$.
It follows that
\[
\omega_{m,N}=\bigl(\varphi_{m,m+1}^{(N)}\bigr)^*\omega_{0,1},\qquad
\omega_{m+1,N}=\bigl(\varphi_{m,m+1}^{(N)}\bigr)^*\omega_{1,1}.
\]
So by \cref{lemma:projective-pullback-positivity}, pulling back the fixed comparison \eqref{eq:projective-adjacent-comparison-Y1} under $\varphi_{m,m+1}^{(N)}$ proves \eqref{eq:projective-adjacent-comparison}.
\end{proof}

We can now apply the abstract estimate \cref{thm:abstract-log-laplacian} to prove \cref{thm:degree-log-laplacian} in the projective setting.

\begin{proposition}\label{prop:projective-log-laplacian}
Assume that \cref{setting:projective} holds.
For every $1\leq i\leq d-1$ and $n\geq1$,
\begin{equation}\label{eq:projective-log-laplacian}
\left|\log\frac{\deg_i(f^n)^2}{\deg_i(f^{n-1})\deg_i(f^{n+1})}\right|
\leq C'_{f,\omega,i}\,\frac{\sqrt{\deg_{i-1}(f^n)\deg_{i+1}(f^n)}}{\deg_i(f^n)},
\end{equation}
where $C'_{f,\omega,i}=2i(d-i)K_{f,\omega}^{2d+4}$ and $K_{f,\omega}$ is the constant in \cref{lemma:projective-adjacent-comparison}.
\end{proposition}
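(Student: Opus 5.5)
The plan is to apply \cref{thm:abstract-log-laplacian} directly to the finite-time orbit variety $Y_N$, choosing $N\coloneqq n+1$ for the given $n\geq1$. The ordered Hodge--Lorentz package will be the projective realization of \cref{prop:geometric-HL}(a) on $Y_N$:
\[
V=\N^1(Y_N)_\bR,\qquad \cN=\Nef^1(Y_N),\qquad \cP=\Psef^1(Y_N),
\]
with $Q$ the intersection form on $Y_N$, which is a normal projective variety of dimension $d\geq2$. For the classes we take $\eta_m\coloneqq\omega_{m,N}$ for $0\leq m\leq N$. These lie in $\cN$ because each $\omega_{m,N}$ is nef and big by \cref{lemma:projective-pullback-positivity}.

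The constants will be $K\coloneqq K_{f,\omega}$ from \cref{lemma:projective-adjacent-comparison}, $\lambda_d\coloneqq\lambda_d(f)=\deg(f)>0$, and $\delta_j(m)\coloneqq\deg_j(f^m)$ for $0\leq j\leq d$ and $0\leq m\leq N$. The $\delta_j(m)$ are positive: for $m\geq1$ they are mixed intersections of nef and big classes on $\widetilde\Gamma_m$, hence positive by the last assertion of \cref{prop:geometric-HL}, and for $m=0$ we have $\delta_j(0)=\vol_X(\omega)>0$.

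Next we check the two hypotheses. Hypothesis~\ref{thm:abstract-log-laplacian-comparison} is exactly \cref{lemma:projective-adjacent-comparison}. The essential point is that $K_{f,\omega}$ does not depend on $N$, since it was obtained on $Y_1$ and pulled back. Hypothesis~\ref{thm:abstract-log-laplacian-two-time-shift} is exactly \cref{lemma:projective-two-time-shift}, including the case $m=\ell$ with the convention $\deg_j(f^0)=\vol_X(\omega)$. The theorem requires $N\geq2$, and $N=n+1\geq2$ holds because $n\geq1$. The index condition $1\leq n\leq N-1$ also holds.

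Applying \cref{thm:abstract-log-laplacian} at time $n$ then gives \eqref{eq:projective-log-laplacian} with constant $2i(d-i)K_{f,\omega}^{2d+4}$, which is independent of $n$ and $N$. I do not expect a genuine obstacle. The only step needing care is the uniformity of $K$ in $N$, and that is already supplied by \cref{lemma:projective-adjacent-comparison}.
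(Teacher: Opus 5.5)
Your proposal is correct and is essentially the paper's own proof. You take $N=n+1$, use the package of \cref{prop:geometric-HL}(a) on $Y_N$ with $\eta_m=\omega_{m,N}$, $\delta_j(m)=\deg_j(f^m)$ and $K=K_{f,\omega}$, and check the two hypotheses of \cref{thm:abstract-log-laplacian} via \cref{lemma:projective-adjacent-comparison} and \cref{lemma:projective-two-time-shift}; your explicit checks of $\delta_j(m)>0$ and $N\geq2$ fill in points the paper leaves implicit.
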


\begin{proof}
Fix $1\leq i\leq d-1$ and $n\geq1$, and set $N=n+1$.
On the normal projective variety $Y_N$, take the ordered Hodge--Lorentz package
\[
V=\N^1(Y_N)_\bR,\qquad
\cN=\Nef^1(Y_N),\qquad
\cP=\Psef^1(Y_N),
\]
with
\[
Q(D_1,\ldots,D_d)=\bigl(D_1\cdots D_d\bigr)_{Y_N},
\]
as in \cref{prop:geometric-HL}.
For $0\leq m\leq N$ and $0\leq j\leq d$, set
\[
\eta_m=\omega_{m,N},\qquad
\lambda_d=\lambda_d(f)>0,\qquad
\delta_j(m)=\deg_j(f^m)>0.
\]
Each $\eta_m$ is nef and big.
The preorder induced by $\cP$ is precisely $\preceq_{Y_N}$.

By \cref{lemma:projective-adjacent-comparison}, there is a constant $K_{f,\omega}\geq1$, independent of $N$, such that
\[
K_{f,\omega}^{-1}\,\eta_m\preceq_{\cP}\eta_{m+1}\preceq_{\cP}K_{f,\omega}\,\eta_m,\qquad 0\leq m<N.
\]
Furthermore, \cref{lemma:projective-two-time-shift} yields that, for every $0\leq m\leq\ell\leq N$ and $0\leq j\leq d$,
\[
Q\bigl(\eta_m^{[d-j]},\eta_\ell^{[j]}\bigr)=\lambda_d(f)^m\deg_j(f^{\ell-m})=\lambda_d^m\delta_j(\ell-m).
\]
Thus \cref{thm:abstract-log-laplacian} proves \eqref{eq:projective-log-laplacian} with $C'_{f,\omega,i}=2i(d-i)K_{f,\omega}^{2d+4}$.
\end{proof}

\subsection{The compact K\"ahler realization}
\label{sec:kahler-dynamics}

For a compact K\"ahler manifold $Y$, let
\[
\cK(Y)\subseteq H^{1,1}(Y,\bR)
\]
be the K\"ahler cone of K\"ahler classes and let
\[
\cE(Y)\subseteq H^{1,1}(Y,\bR)
\]
be the pseudo-effective cone of classes of positive closed $(1,1)$-currents.
The closure $\overline{\cK}(Y)\subset H^{1,1}(Y,\bR)$ of $\cK(Y)$ is the nef cone and the interior of $\cE(Y)$ is the big cone.
For classes $\alpha_1,\alpha_2\in H^{1,1}(Y,\bR)$, write
\[
\alpha_1\preceq_Y \alpha_2 \quad\Longleftrightarrow\quad \alpha_2-\alpha_1\in \cE(Y).
\]
We refer to \cite{Demailly-agbook} for positivity on compact K\"ahler manifolds.

Though the following parallel lemma is now standard, we include the proof for completeness.

\begin{lemma}\label{lemma:kahler-pullback-positivity}
Let $\varphi\colon Y\to Z$ be a surjective holomorphic map between compact K\"ahler manifolds.
Then pullback preserves the pseudo-effective cone and the nef cone:
\[
\varphi^*\cE(Z)\subseteq\cE(Y),\qquad
\varphi^*\overline{\cK}(Z)\subseteq\overline{\cK}(Y).
\]
If, moreover, $\varphi$ is generically finite, then the pullback of every nef and big class on $Z$ is nef and big on $Y$.
\end{lemma}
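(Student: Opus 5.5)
We prove the three assertions in order: pseudo-effective classes, then nef classes, then nef and big classes in the generically finite case.

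\emph{Pseudo-effective classes.} Let $\alpha\in\cE(Z)$ and represent it by a positive closed $(1,1)$-current $T$. Locally write $T=\theta+\ddc u$, where $\theta$ is a smooth closed real $(1,1)$-form representing $\alpha$ and $u$ is $\theta$-plurisubharmonic. Since $\varphi$ is surjective, $\varphi(Y)$ is not contained in the polar set $\{u=-\infty\}$. Hence, on every connected component of $Y$, the function $u\circ\varphi$ is not identically $-\infty$ and is $\varphi^*\theta$-plurisubharmonic. The current $\varphi^*T\coloneqq\varphi^*\theta+\ddc(u\circ\varphi)$ is then a well-defined positive closed current in the class $\varphi^*\alpha$, so $\varphi^*\alpha\in\cE(Y)$. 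A cleaner alternative is Demailly's regularization: write $T$ as a weak limit of currents $\theta+\ddc u_k$ with $u_k$ smooth, $\theta+\ddc u_k\geq-\epsilon_k\omega_Z$ and $\epsilon_k\to0$. Pulling back gives smooth representatives of $\varphi^*\alpha+\epsilon_k[\varphi^*\omega_Z]$ that are semipositive, and closedness of $\cE(Y)$ finishes the argument.

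\emph{Nef classes.} If $\alpha\in\overline{\cK}(Z)$, then for every $\epsilon>0$ the class $\alpha$ has a smooth representative $\theta_\epsilon\geq-\epsilon\omega_Z$. Pulling back gives $\varphi^*\theta_\epsilon\geq-\epsilon\varphi^*\omega_Z\geq-\epsilon C\omega_Y$, where $C$ depends only on $\varphi$, by compactness of $Y$. This is Demailly's characterization of nefness, so $\varphi^*\alpha\in\overline{\cK}(Y)$. Equivalently, $\varphi^*(\alpha+\epsilon[\omega_Z])$ is represented by a semipositive form, and adding $\epsilon[\omega_Y]$ makes it K\"ahler.

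\emph{Nef and big classes.} Assume $\varphi$ is generically finite and $\alpha$ is nef and big. By the previous paragraph, $\varphi^*\alpha$ is nef. By the Demailly--P\u{a}un criterion, a nef class $\beta$ on a compact K\"ahler manifold is big if and only if $\int\beta^d>0$. Since $\varphi$ is surjective and generically finite, $\dim Y=\dim Z=d$, and the projection formula gives
\[
\int_Y(\varphi^*\alpha)^d=\deg(\varphi)\int_Z\alpha^d.
\]
The right-hand side is positive because $\alpha$ is nef and big. Hence $\varphi^*\alpha$ is big.

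The only non-elementary inputs are the definition of the pullback of a current in the first step, which the regularization argument sidesteps, and the Demailly--P\u{a}un bigness criterion in the last step. If we want to avoid the Demailly--P\u{a}un criterion, we can argue directly instead. Write $\alpha=[\omega_Z]\epsilon+\beta$ with $\beta$ pseudo-effective, so that $\varphi^*\alpha\succeq\epsilon\varphi^*[\omega_Z]$. It then suffices to show that $\varphi^*[\omega_Z]$ is big. This follows from the fact that $\varphi^*\omega_Z$ is a semipositive form that is strictly positive off a proper analytic subset, together with Boucksom's characterization of big classes via K\"ahler currents. The harder part of this alternative is producing an actual K\"ahler current, which is why invoking Demailly--P\u{a}un is preferable.
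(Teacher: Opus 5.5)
Your main line is correct and is essentially the paper's proof: you pull back the (quasi-)psh potential of a positive current, which is not identically $-\infty$ on $Y$ by surjectivity; you use the characterization of nef classes by smooth representatives $\geq-\epsilon\omega_Z$; and you combine the projection formula with the Demailly--P\u{a}un criterion for bigness. The one thing to delete is the ``cleaner alternative'' via regularization: smooth representatives $\theta+\ddc u_k\geq-\epsilon_k\omega_Z$ with $\epsilon_k\to0$ exist only when $\alpha$ is already nef, because smooth regularization loses positivity in proportion to the Lelong numbers of $T$ (think of an exceptional curve of negative self-intersection). That route would therefore ``prove'' $\varphi^*\alpha$ nef, and it does not let you avoid pulling back currents.
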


\begin{proof}
The first assertion follows from \cite[Proposition~III.1.19 and Theorem~I.5.11]{Demailly-agbook}.
Indeed, let $\alpha\in\cE(Z)$, and choose a positive closed $(1,1)$-current $T$ representing $\alpha$, locally, of the form
\[
T=\ddc u,
\]
where $u$ is plurisubharmonic.
Since $\varphi$ is surjective, $u\circ\varphi$ is not identically $-\infty$, and the pullback current $\varphi^*T$ is locally of the form $\ddc(u\circ\varphi)$, which is a positive closed $(1,1)$-current representing $\varphi^*\alpha$.
Thus $\varphi^*\alpha$ is pseudo-effective.

Now let $\alpha\in\overline{\cK}(Z)$ be nef.
Fix K\"ahler forms $\omega_Y$ and $\omega_Z$ on $Y$ and $Z$, respectively.
For every $\epsilon>0$, there is a smooth representative $\alpha_\epsilon$ of $\alpha$ such that
\[
\alpha_\epsilon\geq-\epsilon\omega_Z.
\]
Since $Y$ is compact and $\omega_Y$ is a K\"ahler form, the form $\varphi^*\omega_Z$ is bounded above by $C\omega_Y$ for some $C>0$.
It follows that
\[
\varphi^*\alpha_\epsilon\geq -\epsilon\varphi^*\omega_Z\geq -C\epsilon\omega_Y,
\]
so $\varphi^*\alpha\in\overline{\cK}(Y)$ is nef.

Finally, suppose further that $\varphi$ is generically finite and that $\alpha$ is nef and big.
Set $d=\dim Y=\dim Z$.
Then the projection formula yields
\[
\int_Y(\varphi^*\alpha)^d
=
\deg(\varphi)\int_Z\alpha^d
>0.
\]
Since $\varphi^*\alpha$ is nef, it is big by \cite[Theorem~0.5]{DP04}.
\end{proof}

We now work in \cref{setting:kahler}.
Choose a K\"ahler form representing $\omega$ and denote the form by the same letter.
For $n\geq1$, let $\Gamma_n\subseteq X\times X$ be the graph closure of $f^n$, endowed with its reduced complex analytic structure.
By embedded analytic desingularization \cite{Hironaka77,BM97}, choose a compact K\"ahler resolution
\[
\sigma_n\colon\widetilde\Gamma_n\longrightarrow\Gamma_n.
\]
Denote by $\pr_0,\pr_1\colon X\times X\to X$ the two coordinate projections, and write
\begin{equation}\label{eq:kahler-pnqn}
p_n\coloneqq(\pr_0|_{\Gamma_n})\circ\sigma_n,\qquad
q_n\coloneqq(\pr_1|_{\Gamma_n})\circ\sigma_n
\end{equation}
for the induced holomorphic maps $\widetilde\Gamma_n\to X$.
For $0\leq i\leq d$, define the $i$-th degree of $f^n$ with respect to $\omega$ by
\[
\deg_{i,\omega}(f^n)\coloneqq \int_{\widetilde\Gamma_n}p_n^*\omega^{d-i}\wedge q_n^*\omega^i,\qquad
\deg_{i,\omega}(f^0)\coloneqq \vol_X(\omega)=\int_X \omega^d.
\]
Equivalently, denoting the current of integration over $\Gamma_n$ by $[\Gamma_n]$ and the natural pairing between currents and smooth differential forms of complementary degree by $\langle -,- \rangle$,
\begin{equation}\label{eq:kahler-degree-current}
\deg_{i,\omega}(f^n)=\Bigl\langle [\Gamma_n], \, \pr_0^*\omega^{d-i}\wedge\pr_1^*\omega^i \Bigr\rangle.
\end{equation}
Hence $\deg_{i,\omega}(f^n)$ is independent of the choice of resolution $\sigma_n$.
We henceforth fix a K\"ahler resolution $\sigma_n\colon\widetilde\Gamma_n\to\Gamma_n$ for every $n\geq1$.
Once $\omega$ is fixed, we also shorthand
\[
\deg_i(f^n)=\deg_{i,\omega}(f^n).
\]

The classes $p_n^*\omega$ and $q_n^*\omega$ are nef and big on $\widetilde\Gamma_n$, so the Khovanskii--Teissier inequality \cite{Gromov90,DN06} yields the log-concavity of the degree sequence: for every fixed $n\geq1$,
\begin{equation}\label{eq:kahler-degree-log-concavity}
\deg_i(f^n)^2\geq\deg_{i-1}(f^n)\deg_{i+1}(f^n),\qquad 1\leq i\leq d-1.
\end{equation}
It was proved by Dinh--Sibony \cite{DS05a,DS04-ENS} that the dynamical degree
\[
\lambda_i(f)\coloneqq\lim_{n\to\infty}\deg_i(f^n)^{1/n}
\]
exists and is independent of the choice of K\"ahler class $\omega$.
In particular, one has $\lambda_0(f)=1$ and $\lambda_d(f)=\deg(f)$, the generic topological degree of $f$.

For every fixed $N\geq1$, choose a dense Zariski-open subset $U_N\subseteq X$ on which $f,f^2,\ldots,f^N$ are holomorphic, and set
\[
\iota_N\colon U_N\longrightarrow X^{N+1},\qquad
x\longmapsto\bigl(x,f(x),\ldots,f^N(x)\bigr).
\]
Let
\[
Z_N\coloneqq \overline{\iota_N(U_N)}\subseteq X^{N+1}
\]
be the finite-time orbit closure (this is also called the iterated graph $\Gamma_{[N]}$ in \cite{DS08}).
It is an irreducible compact analytic space of dimension $d$.
By embedded analytic desingularization \cite{Hironaka77,BM97}, choose a compact K\"ahler resolution 
\begin{equation}\label{eq:kahler-finite-orbit-space}
\nu_N\colon Y_N\longrightarrow Z_N.
\end{equation}
For $0\leq m\leq N$, denote the coordinate projection by $\pr_m\colon X^{N+1}\to X$, define
\begin{equation}\label{eq:kahler-pi-mN}
\pi_{m,N}\coloneqq \bigl(\pr_m|_{Z_N}\bigr)\circ\nu_N\colon Y_N\longrightarrow X
\end{equation}
and
\begin{equation}\label{eq:kahler-omega-mN}
\omega_{m,N}\coloneqq \pi_{m,N}^*\omega\in H^{1,1}(Y_N,\bR).
\end{equation}

Note that $\pi_{0,N}$ is bimeromorphic and $\pi_{m,N}=f^m\circ\pi_{0,N}$ as meromorphic maps.
In particular, each $\pi_{m,N}$ is surjective and generically finite with generic degree
\begin{equation}\label{eq:kahler-coordinate-degree}
\deg(\pi_{m,N})=\deg(f^m)=\lambda_d(f)^m.
\end{equation}
By \cref{lemma:kahler-pullback-positivity}, every $\omega_{m,N}$ is nef and big.

We now verify the two-time shift identity and adjacent comparison for the classes $\omega_{m,N}$ on $Y_N$.

\begin{lemma}\label{lemma:kahler-two-time-shift}
With notation as above, for every $0\leq i\leq d$ and $0\leq m\leq\ell\leq N$,
\begin{equation}\label{eq:kahler-two-time-shift}
\int_{Y_N}\omega_{m,N}^{d-i}\wedge\omega_{\ell,N}^i=\lambda_d(f)^m\deg_i(f^{\ell-m}).
\end{equation}
\end{lemma}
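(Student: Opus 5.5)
The argument mirrors the projective \cref{lemma:projective-two-time-shift}, with normalization replaced by resolutions and intersection numbers replaced by integrals or pairings of currents.

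\emph{Case $m=\ell$.} Here $\pi_{m,N}$ is a surjective generically finite holomorphic map of generic degree $\lambda_d(f)^m$ by \eqref{eq:kahler-coordinate-degree}. The change of variables formula for smooth top forms, $\int_{Y_N}\pi_{m,N}^*(\omega^d)=\deg(\pi_{m,N})\int_X\omega^d$, then gives the claim.

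\emph{Case $m<\ell$.} Set $k=\ell-m$ and consider $\Psi\coloneqq(\pi_{m,N},\pi_{\ell,N})\colon Y_N\to X\times X$. On the dense set $\nu_N^{-1}(\iota_N(U_N))$ its image consists of points $(x_m,f^k(x_m))$ with $x_m=f^m(x)$. Since $f^m$ is dominant and the image of the compact space $Y_N$ is closed and irreducible of dimension $d$, the image is exactly $\Gamma_k$. The generic degree of $\Psi$ onto $\Gamma_k$ equals $\deg(\pi_{m,N})=\lambda_d(f)^m$, because $\pr_0|_{\Gamma_k}$ is bimeromorphic. We cannot factor $\Psi$ through the chosen resolution $\widetilde\Gamma_k$, so we work with currents on $X\times X$ and avoid that factorization. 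The integrand is $\Psi^*\bigl(\pr_0^*\omega^{d-i}\wedge\pr_1^*\omega^i\bigr)$, a smooth top form pulled back from $X\times X$. Hence
\[
\int_{Y_N}\omega_{m,N}^{d-i}\wedge\omega_{\ell,N}^i=\bigl\langle\Psi_*[Y_N],\,\pr_0^*\omega^{d-i}\wedge\pr_1^*\omega^i\bigr\rangle .
\]

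The key step is the identity of currents $\Psi_*[Y_N]=\lambda_d(f)^m[\Gamma_k]$. The current $\Psi_*[Y_N]$ is a closed positive current of bidimension $(d,d)$ supported on the irreducible $d$-dimensional analytic set $\Gamma_k$. By the support theorem (or directly: it is integration over $\Psi$ counted with multiplicity), it equals $c[\Gamma_k]$ for a constant $c\ge 0$. To find $c$, evaluate over a Zariski-open subset of $\Gamma_k$ over which $\Psi$ is an unramified covering. There $c$ is the number of sheets, namely the generic degree $\lambda_d(f)^m$. Combining this with \eqref{eq:kahler-degree-current} gives $\lambda_d(f)^m\deg_i(f^k)$, which is the claim.

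\textbf{Main obstacle.} The delicate point is computing $c$ rigorously, i.e., showing that the proper map $\Psi$ of generic degree $\lambda_d(f)^m$ pushes $[Y_N]$ forward to exactly that multiple of $[\Gamma_k]$. I would handle it as follows. Take a Zariski-open $W\subseteq\Gamma_{k,\mathrm{reg}}$ whose complement is a proper analytic subset, chosen so that $\Psi^{-1}(W)\to W$ is a finite étale cover of degree $\lambda_d(f)^m$. Test forms supported in $W$ then give $c=\lambda_d(f)^m$. Finally, since proper analytic subsets carry no mass for $[\Gamma_k]$ or for $\Psi_*[Y_N]$, the identity of currents extends from $W$ to all of $X\times X$.
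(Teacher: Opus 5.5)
Your proposal is correct and follows essentially the same route as the paper: the projection formula when $m=\ell$, and when $m<\ell$ the two-coordinate map $(\pi_{m,N},\pi_{\ell,N})\colon Y_N\to\Gamma_k$ of generic degree $\lambda_d(f)^m$, the current identity $\Psi_*[Y_N]=\lambda_d(f)^m[\Gamma_k]$, and the resolution-independent formula \eqref{eq:kahler-degree-current}, which avoids any factorization through $\widetilde\Gamma_k$. The one addition is your support-theorem argument, which computes the multiplicity on a finite \'etale locus and so spells out a step that the paper asserts directly.
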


\begin{proof}
If $m=\ell$, then by the projection formula and \eqref{eq:kahler-coordinate-degree},
\[
\int_{Y_N}\omega_{m,N}^d=\deg(\pi_{m,N})\int_X\omega^d=\lambda_d(f)^m\deg_i(f^0),
\]
which proves \eqref{eq:kahler-two-time-shift}.
We thus assume that $m<\ell$.
Set $k\coloneqq\ell-m$.
Similarly, the two-coordinate map
\[
\varphi_{m,\ell}^{(N)}\coloneqq(\pi_{m,N},\pi_{\ell,N})\colon Y_N\longrightarrow X\times X
\]
has image equal to $\Gamma_k$.
Indeed, on the dense regular-orbit locus,
\[
\varphi_{m,\ell}^{(N)}(x)=\bigl(f^m(x),f^\ell(x)\bigr)=\bigl(x_m,f^k(x_m)\bigr),\qquad
x_m\coloneqq f^m(x),
\]
and these points are dense in $\Gamma_k$ because $f^m$ is dominant.
We therefore regard $\varphi_{m,\ell}^{(N)}$ as a surjective holomorphic map
\[
\varphi_{m,\ell}^{(N)}\colon Y_N\longrightarrow\Gamma_k.
\]
Moreover, by the definition of $\varphi_{m,\ell}^{(N)}$,
\begin{equation}\label{eq:kahler-pi-varphi}
\pi_{m,N}=(\pr_0|_{\Gamma_k})\circ\varphi_{m,\ell}^{(N)},\qquad
\pi_{\ell,N}=(\pr_1|_{\Gamma_k})\circ\varphi_{m,\ell}^{(N)}.
\end{equation}

Since $\pr_0|_{\Gamma_k} \colon \Gamma_k \to X$ is bimeromorphic, we have
\[
\deg\bigl(\varphi_{m,\ell}^{(N)}\bigr)=\deg(\pi_{m,N})=\lambda_d(f)^m,
\]
where the last equality is \eqref{eq:kahler-coordinate-degree}.
It follows that
\[
\bigl(\varphi_{m,\ell}^{(N)}\bigr)_*[Y_N]=\lambda_d(f)^m[\Gamma_k].
\]
Therefore, by the projection formula and \eqref{eq:kahler-degree-current}, we obtain
\begin{align*}
\int_{Y_N}\omega_{m,N}^{d-i}\wedge\omega_{\ell,N}^i
&= \Bigl\langle\bigl(\varphi_{m,\ell}^{(N)}\bigr)_*[Y_N], \, \pr_0^*\omega^{d-i}\wedge\pr_1^*\omega^i \Bigr\rangle\\
&= \lambda_d(f)^m \Bigl\langle [\Gamma_k], \, \pr_0^*\omega^{d-i}\wedge\pr_1^*\omega^i \Bigr\rangle\\
&= \lambda_d(f)^m\deg_i(f^k).\qedhere
\end{align*}
\end{proof}

\begin{lemma}\label{lemma:kahler-adjacent-comparison}
With notation as above, there is a constant $K_{f,\omega}\geq1$ such that for every $N\geq1$ and $0\leq m<N$,
\begin{equation}\label{eq:kahler-adjacent-comparison}
K_{f,\omega}^{-1}\,\omega_{m,N}\preceq_{Y_N}\omega_{m+1,N}\preceq_{Y_N}K_{f,\omega}\,\omega_{m,N}.
\end{equation}
\end{lemma}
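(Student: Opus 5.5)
We follow the projective argument of \cref{lemma:projective-adjacent-comparison}, replacing normalization by resolution. The plan is to fix one comparison on the fixed resolution $\widetilde\Gamma_1$ and transport it to every $Y_N$ by pullback.

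\textbf{Step 1: comparison on $\widetilde\Gamma_1$.} The classes $p_1^*\omega$ and $q_1^*\omega$ are nef and big on $\widetilde\Gamma_1$ by \cref{lemma:kahler-pullback-positivity}, since $p_1$ and $q_1$ are surjective and generically finite. Big classes lie in the interior of $\cE(\widetilde\Gamma_1)$, which is a finite-dimensional cone. Hence there is $K_{f,\omega}\geq1$, depending only on $f,\omega$ and the fixed resolution $\sigma_1$, with
\[
K_{f,\omega}^{-1}\,p_1^*\omega\preceq_{\widetilde\Gamma_1}q_1^*\omega\preceq_{\widetilde\Gamma_1}K_{f,\omega}\,p_1^*\omega.
\]

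\textbf{Step 2: lift the adjacent-coordinate map.} Fix $N\geq1$ and $0\leq m<N$. As in the proof of \cref{lemma:kahler-two-time-shift}, $\varphi\coloneqq\varphi^{(N)}_{m,m+1}\colon Y_N\to\Gamma_1$ is a surjective holomorphic map. Unlike the normal projective case, it need not factor through the resolution $\sigma_1\colon\widetilde\Gamma_1\to\Gamma_1$. This is the main obstacle.

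The first thing to try is avoiding the lift by working with currents on $\Gamma_1$. The comparison in Step 1 says that $K_{f,\omega}\,p_1^*\omega-q_1^*\omega$ is represented by a positive closed current $T$ on $\widetilde\Gamma_1$. Then $(\sigma_1)_*T$ is a positive closed current on $\Gamma_1$ representing $K_{f,\omega}\pr_0^*\omega-\pr_1^*\omega$ restricted to $\Gamma_1$. However, pulling back currents from a singular space is delicate, because there need not be global psh potentials on $\Gamma_1$.

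The cleaner route is therefore a lift. The meromorphic map $\sigma_1^{-1}\circ\varphi\colon Y_N\dashrightarrow\widetilde\Gamma_1$ is resolved by a further blow-up $\mu\colon Y'_N\to Y_N$, which can be taken to be a compact K\"ahler manifold. This gives a holomorphic map $\psi\colon Y'_N\to\widetilde\Gamma_1$ with $\sigma_1\circ\psi=\varphi\circ\mu$. The map $\psi$ is surjective, since its image is closed and contains a dense set.

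\textbf{Step 3: pull back and push forward.} Pulling back by $\psi$ via \cref{lemma:kahler-pullback-positivity} gives
\[
K_{f,\omega}^{-1}\mu^*\omega_{m,N}\preceq_{Y'_N}\mu^*\omega_{m+1,N}\preceq_{Y'_N}K_{f,\omega}\,\mu^*\omega_{m,N},
\]
because $\psi^*p_1^*\omega=\mu^*\pi_{m,N}^*\omega$ and $\psi^*q_1^*\omega=\mu^*\pi_{m+1,N}^*\omega$.

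Now apply $\mu_*$. Since $\mu$ is bimeromorphic between compact K\"ahler manifolds, $\mu_*\mu^*=\id$ on $H^{1,1}(Y_N,\bR)$. Pushforward of a positive closed $(1,1)$-current is again a positive closed $(1,1)$-current, so $\mu_*$ maps $\cE(Y'_N)$ into $\cE(Y_N)$. Applying $\mu_*$ to both comparisons yields \eqref{eq:kahler-adjacent-comparison}, with $K_{f,\omega}$ independent of $N$ and $m$.

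An alternative that avoids $\mu$ is to choose each resolution $Y_N$ so that it already dominates $\widetilde\Gamma_1$ through every adjacent projection. This is possible by resolving the finitely many maps $\sigma_1^{-1}\circ\varphi^{(N)}_{m,m+1}$ in turn. Since the degrees do not depend on the resolution, this choice is harmless. Either way, the essential input is that positivity is preserved under holomorphic pullback and bimeromorphic pushforward.
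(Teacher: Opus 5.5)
Your proposal is correct and follows essentially the same route as the paper. You fix the comparison on $\widetilde\Gamma_1$, resolve the meromorphic lift $\sigma_1^{-1}\circ\varphi^{(N)}_{m,m+1}$ by a bimeromorphic modification of $Y_N$, pull back along the resulting surjective holomorphic map using \cref{lemma:kahler-pullback-positivity}, and push forward using $\mu_*\mu^*=\id$. Your explicit remark that $\mu_*$ preserves pseudo-effectivity, because pushforward of positive closed currents stays positive and closed, spells out a step the paper leaves implicit.
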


\begin{proof}
The classes $p_1^*\omega$ and $q_1^*\omega$ are nef and big on the fixed compact K\"ahler manifold $\widetilde\Gamma_1$.
Since the big cone is the interior of the pseudo-effective cone, there is a constant $K_{f,\omega}\geq1$, depending only on $p_1^*\omega$ and $q_1^*\omega$, such that
\begin{equation}\label{eq:kahler-fixed-graph-comparison}
K_{f,\omega}^{-1}\,p_1^*\omega\preceq_{\widetilde\Gamma_1}q_1^*\omega\preceq_{\widetilde\Gamma_1}K_{f,\omega}\,p_1^*\omega.
\end{equation}

Fix $N\geq1$ and $0\leq m<N$.
The adjacent-coordinate map
\[
\varphi_{m,m+1}^{(N)}\coloneqq(\pi_{m,N},\pi_{m+1,N})\colon Y_N\longrightarrow X\times X
\]
has image equal to $\Gamma_1$, and we regard it as a surjective holomorphic map $Y_N\to\Gamma_1$.

Recall that $\sigma_1\colon\widetilde\Gamma_1\to\Gamma_1$ is bimeromorphic, so that we have a meromorphic lift
\[
\sigma_1^{-1}\circ\varphi_{m,m+1}^{(N)}\colon Y_N\dashrightarrow \widetilde{\Gamma}_1.
\]
After resolving its indeterminacy, we obtain a compact K\"ahler manifold $W_{m,N}$, a bimeromorphic morphism $\tau_{m,N}\colon W_{m,N}\to Y_N$, and a surjective holomorphic map $\psi_{m,N}\colon W_{m,N}\to\widetilde\Gamma_1$, such that the following diagram commutes:
\[
\begin{tikzcd}
W_{m,N} \arrow[r, "\psi_{m,N}"] \arrow[d, "\tau_{m,N}"'] & \widetilde{\Gamma}_1 \arrow[d, "\sigma_1"]\\
Y_N \arrow[r, "\varphi_{m,m+1}^{(N)}"'] & \Gamma_1.
\end{tikzcd}
\]
Moreover, by \eqref{eq:kahler-pnqn} and \eqref{eq:kahler-pi-varphi}, we have
\begin{align*}
p_1\circ\psi_{m,N}&=(\pr_0|_{\Gamma_1})\circ\sigma_1\circ\psi_{m,N}=(\pr_0|_{\Gamma_1})\circ\varphi_{m,m+1}^{(N)}\circ\tau_{m,N}=\pi_{m,N}\circ\tau_{m,N},\\
q_1\circ\psi_{m,N}&=(\pr_1|_{\Gamma_1})\circ\sigma_1\circ\psi_{m,N}=(\pr_1|_{\Gamma_1})\circ\varphi_{m,m+1}^{(N)}\circ\tau_{m,N}=\pi_{m+1,N}\circ\tau_{m,N}.
\end{align*}
It follows that
\[
\tau_{m,N}^*\omega_{m,N}=\psi_{m,N}^*p_1^*\omega,\qquad
\tau_{m,N}^*\omega_{m+1,N}=\psi_{m,N}^*q_1^*\omega.
\]
Thus pulling back the comparison \eqref{eq:kahler-fixed-graph-comparison} by $\psi_{m,N}$ and applying \cref{lemma:kahler-pullback-positivity} yield
\begin{equation}\label{eq:kahler-comparison-on-WmN}
K_{f,\omega}^{-1}\,\tau_{m,N}^*\omega_{m,N}\preceq_{W_{m,N}}\tau_{m,N}^*\omega_{m+1,N}\preceq_{W_{m,N}}K_{f,\omega}\,\tau_{m,N}^*\omega_{m,N}.
\end{equation}
Since $\tau_{m,N}$ is bimeromorphic and holomorphic, $(\tau_{m,N})_*\tau_{m,N}^*=\id$ on $H^{1,1}(Y_N,\bR)$.
By taking pushforward $(\tau_{m,N})_*$ in \eqref{eq:kahler-comparison-on-WmN}, we prove \eqref{eq:kahler-adjacent-comparison}.
\end{proof}

\begin{proposition}\label{prop:kahler-log-laplacian}
Assume that \cref{setting:kahler} holds.
For every $1\leq i\leq d-1$ and $n\geq1$,
\begin{equation}\label{eq:kahler-log-laplacian}
\left|\log\frac{\deg_i(f^n)^2}{\deg_i(f^{n-1})\deg_i(f^{n+1})}\right|
\leq C'_{f,\omega,i}\,\frac{\sqrt{\deg_{i-1}(f^n)\deg_{i+1}(f^n)}}{\deg_i(f^n)},
\end{equation}
where $C'_{f,\omega,i}=2i(d-i)K_{f,\omega}^{2d+4}$ and $K_{f,\omega}$ is the constant in \cref{lemma:kahler-adjacent-comparison}.
\end{proposition}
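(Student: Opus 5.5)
The plan mirrors the projective case, \cref{prop:projective-log-laplacian}: we apply the abstract estimate \cref{thm:abstract-log-laplacian} on the compact K\"ahler finite-time orbit space. Fix $1\leq i\leq d-1$ and $n\geq1$, and set $N=n+1$. Let $Y_N$ be the compact K\"ahler resolution \eqref{eq:kahler-finite-orbit-space} of the orbit closure $Z_N$. On $Y_N$ we use the ordered Hodge--Lorentz package of \cref{prop:geometric-HL}(b), namely
\[
V=H^{1,1}(Y_N,\bR),\qquad \cN=\overline{\cK}(Y_N),\qquad \cP=\cE(Y_N),\qquad Q(\alpha_1,\ldots,\alpha_d)=\int_{Y_N}\alpha_1\wedge\cdots\wedge\alpha_d .
\]
The preorder $\preceq_\cP$ is then exactly $\preceq_{Y_N}$.

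For the data, put $\eta_m=\omega_{m,N}$ for $0\le m\le N$, $\lambda_d=\lambda_d(f)$, and $\delta_j(m)=\deg_j(f^m)$. Each $\eta_m$ is nef by \cref{lemma:kahler-pullback-positivity}, so $\eta_m\in\cN$. Each $\delta_j(m)$ is positive: for $m=0$ it equals $\vol_X(\omega)>0$, and for $m\geq1$ it is a mixed intersection of the nef and big classes $p_m^*\omega$ and $q_m^*\omega$ on $\widetilde\Gamma_m$, which is positive by the strict positivity statement in \cref{prop:geometric-HL}.

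Next we check the two hypotheses of \cref{thm:abstract-log-laplacian}. Hypothesis~\ref{thm:abstract-log-laplacian-comparison} is \cref{lemma:kahler-adjacent-comparison}, which holds with a constant $K_{f,\omega}$ independent of $N$ and $m$. Hypothesis~\ref{thm:abstract-log-laplacian-two-time-shift} is \cref{lemma:kahler-two-time-shift}, which gives $Q(\eta_m^{[d-j]},\eta_\ell^{[j]})=\lambda_d(f)^m\deg_j(f^{\ell-m})$ for all $0\le m\le\ell\le N$.

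The theorem, applied at the index $n\le N-1$, then yields \eqref{eq:kahler-log-laplacian} with constant $2i(d-i)K_{f,\omega}^{2d+4}$. This constant does not depend on $n$, because $K_{f,\omega}$ is uniform in $N$; that uniformity is what the statement requires.

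There is no real obstacle, since the geometric work was done in \cref{lemma:kahler-two-time-shift,lemma:kahler-adjacent-comparison}. The one point to watch is that the package of \cref{prop:geometric-HL}(b) is taken on the smooth resolution $Y_N$, not on the singular space $Z_N$. The classes $\omega_{m,N}$ live on $Y_N$, and that is where Hodge--Riemann and the pseudo-effective comparison apply.
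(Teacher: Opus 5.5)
Your proposal is correct and follows the paper's proof essentially verbatim. You take $N=n+1$, use the package of \cref{prop:geometric-HL}(b) on $Y_N$ with $\eta_m=\omega_{m,N}$, $\lambda_d=\lambda_d(f)$ and $\delta_j(m)=\deg_j(f^m)$, check the two hypotheses via \cref{lemma:kahler-adjacent-comparison,lemma:kahler-two-time-shift}, and conclude by \cref{thm:abstract-log-laplacian}. Your positivity check for $\delta_j(m)$ just spells out what the paper states directly.
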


\begin{proof}
Fix $1\leq i\leq d-1$ and $n\geq1$, and set $N=n+1$.
On the compact K\"ahler manifold $Y_N$, take the ordered Hodge--Lorentz package
\[
V=H^{1,1}(Y_{N},\bR),\qquad
\cN=\overline{\cK}(Y_{N}),\qquad
\cP=\cE(Y_{N}),
\]
with
\[
Q(\alpha_1,\ldots,\alpha_d)=\int_{Y_{N}}\alpha_1\wedge\cdots\wedge\alpha_d,
\]
as in \cref{prop:geometric-HL}.
For $0\leq m\leq N$ and $0\leq j\leq d$, set
\[
\eta_m=\omega_{m,N},\qquad
\lambda_d=\lambda_d(f)>0,\qquad
\delta_j(m)=\deg_j(f^m)>0.
\]
The classes $\eta_m$ are nef and big.

By \cref{lemma:kahler-adjacent-comparison}, there is a constant $K_{f,\omega}\geq1$, independent of $N$, such that
\[
K_{f,\omega}^{-1}\,\eta_m\preceq_{\cP}\eta_{m+1}\preceq_{\cP}K_{f,\omega}\,\eta_m,\qquad 0\leq m<N.
\]
Moreover, \cref{lemma:kahler-two-time-shift} gives, for every $0\leq m\leq\ell\leq N$ and $0\leq j\leq d$,
\[
Q\bigl(\eta_m^{[d-j]},\eta_\ell^{[j]}\bigr)=\lambda_d(f)^m\deg_j(f^{\ell-m})=\lambda_d^m\delta_j(\ell-m).
\]
Thus both hypotheses of \cref{thm:abstract-log-laplacian} are satisfied, and we conclude that \eqref{eq:kahler-log-laplacian} holds with $C'_{f,\omega,i}=2i(d-i)K_{f,\omega}^{2d+4}$.
\end{proof}

\begin{proof}[Proof of \cref{thm:degree-log-laplacian}]
It follows from \cref{prop:projective-log-laplacian,prop:kahler-log-laplacian} that the degree sequences satisfy the temporal discrete logarithmic-Laplacian estimate \eqref{eq:degree-log-laplacian} in both settings.
\end{proof}

\section{Degree growth and asymptotics}
\label{sec:proof-asymptotics}

Throughout this section, denote the $i$-th degree $\deg_{i,\omega}(f^n)$ by $\deg_i(f^n)$ for brevity.

\subsection{Proof of Theorem~\ref{thm:strict-break-asymptotics}}

We first prove an elementary lemma on sequences that turns the temporal discrete logarithmic-Laplacian estimate on degree sequences into a precise asymptotic.

\begin{lemma}\label{lemma:discrete-integration}
Let $(a_n)_{n\geq0}$ be a sequence of positive real numbers such that
\[
\lim_{n\to\infty}a_n^{1/n}=\alpha>0.
\]
Suppose that for some $0<q<1$,
\begin{equation}\label{eq:abstract-kappa-bound}
\left|\log\frac{a_n^2}{a_{n-1}a_{n+1}}\right|=O(q^n).
\end{equation}
Then
\[
a_n=C\alpha^n\bigl(1+O(q^n)\bigr),
\]
where
\[
C=a_0\exp\!\left(\sum_{m\geq1}m \log\frac{a_m^2}{a_{m-1}a_{m+1}}\right)
=\lim_{n\to\infty}\frac{a_n}{\alpha^n}
=\lim_{n\to\infty}\frac{a_n^{n+1}}{a_{n+1}^n},
\]
and the series converges absolutely.
\end{lemma}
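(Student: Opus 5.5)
Set $b_n\coloneqq\log a_n$ and $\kappa_m\coloneqq\log\frac{a_m^2}{a_{m-1}a_{m+1}}=2b_m-b_{m-1}-b_{m+1}$, so that the hypothesis \eqref{eq:abstract-kappa-bound} says $|\kappa_m|\le Mq^m$ for some $M>0$ and all $m\geq1$. We work with the logarithmic slopes $c_n\coloneqq b_{n+1}-b_n=\log(a_{n+1}/a_n)$. Since $c_{m}-c_{m-1}=-\kappa_m$, we get $c_n=c_0-\sum_{m=1}^n\kappa_m$. The series $\sum_m\kappa_m$ converges absolutely, so $c_n\to c_\infty\coloneqq c_0-\sum_{m\ge1}\kappa_m$ and $|c_n-c_\infty|\le\sum_{m>n}|\kappa_m|=O(q^n)$.

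Next we identify $c_\infty$ with $\log\alpha$. Since $b_n/n=(b_0+\sum_{k<n}c_k)/n$ is a Cesàro mean of a convergent sequence, it tends to $c_\infty$. On the other hand $b_n/n\to\log\alpha$ by hypothesis, so $c_\infty=\log\alpha$.

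Now we integrate a second time. We have $b_n-n\log\alpha=b_0+\sum_{k=0}^{n-1}(c_k-c_\infty)$, and $\sum_k|c_k-c_\infty|<\infty$. Hence $b_n-n\log\alpha\to L\coloneqq b_0+\sum_{k\ge0}(c_k-c_\infty)$, with error $\sum_{k\ge n}|c_k-c_\infty|=O(q^n)$ (a geometric tail of an $O(q^k)$ sequence). Exponentiating, $a_n=e^{L}\alpha^n e^{O(q^n)}=C\alpha^n(1+O(q^n))$, where $C=e^L$ and we used $e^x=1+O(x)$ for bounded $x$. This also gives $C=\lim a_n/\alpha^n$.

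For the explicit formula, we substitute $c_k-c_\infty=\sum_{m>k}\kappa_m$ and swap the order of summation. This is legitimate because $\sum_k\sum_{m>k}|\kappa_m|=\sum_m m|\kappa_m|<\infty$, and it gives $L=b_0+\sum_{m\ge1}m\kappa_m$. The same bound shows that the series $\sum_m m\kappa_m$ converges absolutely. For the last expression, note that $\log\frac{a_n^{n+1}}{a_{n+1}^n}=(n+1)b_n-nb_{n+1}=b_n-nc_n$. Writing this as $(b_n-n\log\alpha)-n(c_n-c_\infty)$, the first term tends to $L$ and the second is $O(nq^n)\to0$, so the limit is again $C$.

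The argument is elementary. The only delicate points are identifying the limiting slope with $\log\alpha$ (via Cesàro) and justifying the interchange of summation; both are covered by the absolute summability of $m\kappa_m$.
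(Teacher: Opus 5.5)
Your proof is correct and follows essentially the same route as the paper: you sum the $O(q^n)$ logarithmic second differences twice, identify the limiting slope with $\log\alpha$ via Ces\`aro, and interchange the double sum to obtain $\sum_{m\geq1}m\kappa_m$. The only cosmetic difference is the last limit, which you get from $b_n-nc_n$ and the $O(nq^n)$ decay, whereas the paper uses the exact telescoping identity $a_0\exp\bigl(\sum_{m=1}^n m\kappa_m\bigr)=a_n^{n+1}/a_{n+1}^n$.
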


\begin{proof}
Denote for all appropriate $n$,
\[
b_n\coloneqq\log a_n,\qquad
c_n\coloneqq b_n-b_{n-1}=\log\frac{a_n}{a_{n-1}},\qquad
\kappa_n\coloneqq c_n-c_{n+1}=\log\frac{a_n^2}{a_{n-1}a_{n+1}}.
\]
Then the assumed \eqref{eq:abstract-kappa-bound} says that $|\kappa_n|=O(q^n)$.
Hence $(c_n)_{n\geq1}$ is an exponentially Cauchy sequence and converges to some $c\in\bR$, with
\[
c_n-c=\sum_{m\geq n}\kappa_m=O(q^n).
\]
By the Stolz--Ces\`aro theorem and the root-limit assumption, we have
\[
c=\lim_{n\to\infty}c_n
=\lim_{n\to\infty}\frac{b_n}{n}
=\lim_{n\to\infty}\log(a_n^{1/n})
=\log\alpha.
\]
Moreover, since $\sum_{m\geq1}m\kappa_m$ converges absolutely,
\[
\sum_{n\geq1}(c_n-\log\alpha)=\sum_{n\geq1}\sum_{m\geq n}\kappa_m=\sum_{m\geq1}m\kappa_m.
\]
Define
\[
C\coloneqq a_0\exp\!\left(\sum_{m\geq1}m\kappa_m\right)\in\bR_{>0}.
\]
We thus obtain
\[
\log\frac{a_n}{C\alpha^n}=b_n-b_0-n\log\alpha-\sum_{m\geq1}m\kappa_m=-\sum_{m>n}(c_m-\log\alpha)=O(q^n).
\]
Taking exponentiation, we get
\[
a_n=C\alpha^n\exp\bigl(O(q^n)\bigr)=C\alpha^n\bigl(1+O(q^n)\bigr).
\]
This asymptotic also yields $a_n/\alpha^n\to C$.
Finally, direct telescoping gives, for $n\geq1$,
\[
a_0\exp\!\left(\sum_{m=1}^nm\kappa_m\right)=\frac{a_n^{n+1}}{a_{n+1}^n}.
\]
By letting $n\to\infty$, we prove the remaining formula for $C$.
\end{proof}

\begin{proof}[Proof of \cref{thm:strict-break-asymptotics}]
Fix $\eta$ with $s_i(f)<\eta<\lambda_i(f)$ and set $q\coloneqq\eta/\lambda_i(f)$.
Recall that in either setting, the limits defining dynamical degrees exist, i.e., for every $0\leq j\leq d$,
\[
\lim_{n\to\infty}\deg_j(f^n)^{1/n}=\lambda_j(f);
\]
see \cite{DS05a,DS04-ENS} and \cite{Truong20,Dang20,JL23} in \cref{setting:kahler} and \cref{setting:projective}, respectively.
Therefore, one has
\[
\lim_{n\to\infty}\Biggl(\frac{\sqrt{\deg_{i-1}(f^n)\deg_{i+1}(f^n)}}{\deg_i(f^n)}\Biggr)^{1/n}
=\frac{s_i(f)}{\lambda_i(f)}
<\frac{\eta}{\lambda_i(f)}=q,
\]
and hence
\[
\frac{\sqrt{\deg_{i-1}(f^n)\deg_{i+1}(f^n)}}{\deg_i(f^n)}=O(q^n).
\]
The temporal discrete logarithmic-Laplacian estimate \cref{thm:degree-log-laplacian} now implies
\[
\left|\log\frac{\deg_i(f^n)^2}{\deg_i(f^{n-1})\deg_i(f^{n+1})}\right|=O(q^n).
\]
Applying \cref{lemma:discrete-integration} with $a_n=\deg_i(f^n)$ and $\alpha=\lambda_i(f)$ yields
\[
\deg_i(f^n)=C_{f,\omega,i}\,\lambda_i(f)^n\bigl(1+O(q^n)\bigr)
=C_{f,\omega,i}\,\lambda_i(f)^n+O(\eta^n),
\]
where the constant $C_{f,\omega,i}>0$ is exactly given by \eqref{eq:constant-C-exp-sum}.
Its series converges absolutely because the $m$-th summand is $O(mq^m)$.
\end{proof}

\begin{remark}\label{rmk:C-f-omega-i-limits}
By \cref{lemma:discrete-integration}, the leading constant also satisfies
\[
C_{f,\omega,i}=\lim_{n\to\infty}\frac{\deg_i(f^n)}{\lambda_i(f)^n}
=\lim_{n\to\infty}\frac{\deg_i(f^n)^{n+1}}{\deg_i(f^{n+1})^n}.
\]
\end{remark}

\subsection{Proofs of Theorem~\ref{thm:affine-interval-growth} and Corollary~\ref{cor:no-intermediate-growth}}

The properties satisfied by the normalized degree sequences $\deg_i(f^n)/\lambda_i(f)^n$ and a discrete Phragm\'en--Lindel\"of argument yield the desired upper bound.
We formulate this as an abstract lemma.

For an array $a=(a_{i,n})$, define its spatial and temporal discrete Laplacian, whenever the neighboring terms are defined, by
\begin{align*}
(\Delta_{\mathrm{sp}} a)_{i,n}&\coloneqq 2a_{i,n}-a_{i-1,n}-a_{i+1,n},\\
(\Delta_{\mathrm{tm}} a)_{i,n}&\coloneqq 2a_{i,n}-a_{i,n-1}-a_{i,n+1}.
\end{align*}

\begin{lemma}\label{lemma:two-variable-maximum-principle}
Fix two integers $r<s$.
For each $r\leq i\leq s$, let $(u_{i,n})_{n\geq0}$ be a real sequence satisfying the following properties.
\begin{enumerate}[label=\textnormal{(\roman*)}]
\item \label{lemma:two-variable-maximum-principle-infinitesimal}
For every $r\le i\le s$,
\begin{equation}\label{eq:affine-interval-root-limit}
\lim_{n\to\infty} \frac{u_{i,n}}{n}=0.
\end{equation}

\item \label{lemma:two-variable-maximum-principle-temporal-estimate}
There exists $C_0>0$ such that for every $r<j<s$ and $n\geq1$,
\begin{equation}\label{eq:affine-interval-temporal-estimate}
\bigl|(\Delta_{\mathrm{tm}} u)_{j,n}\bigr|\leq
C_0\exp\Bigl(-\frac{1}{2}(\Delta_{\mathrm{sp}} u)_{j,n}\Bigr).
\end{equation}

\item \label{lemma:two-variable-maximum-principle-bounded}
There exists $C_1>0$ such that for every $n\geq0$,
\begin{equation}\label{eq:affine-interval-bounded}
\max\{u_{r,n},u_{s,n}\}\leq C_1.
\end{equation}
\end{enumerate}
Then there is a constant $C'_{u,r,s}>0$ such that for every $r\leq i\leq s$ and $n\geq1$,
\begin{equation}\label{eq:affine-interval-conclusion}
u_{i,n}\leq C'_{u,r,s}+2(i-r)(s-i)\log n.
\end{equation}
\end{lemma}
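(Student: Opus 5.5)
We would prove \eqref{eq:affine-interval-conclusion} by a maximum principle against an explicit barrier. The key observation is that $g(i)\coloneqq(i-r)(s-i)$ satisfies $(\Delta_{\mathrm{sp}}g)_i=2$. The function $\log n$ is strictly concave with $2\log n-\log(n-1)-\log(n+1)=-\log(1-n^{-2})\geq n^{-2}$. We therefore plan to use the barrier
\[
w_{i,n}\coloneqq g(i)\bigl(2\log n+D\bigr)+B,\qquad r\leq i\leq s,\ n\geq1,
\]
with constants $D,B\geq0$ to be chosen. The constant $D$ only inflates the spatial Laplacian, since $(\Delta_{\mathrm{sp}}w)_{i,n}=4\log n+2D$. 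Its temporal Laplacian is $(\Delta_{\mathrm{tm}}w)_{i,n}=2g(i)\bigl(-\log(1-n^{-2})\bigr)\geq 2n^{-2}$ for $r<i<s$ and $n\geq2$, because $g(i)\geq1$ there.

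First fix $D$ so large that $C_0e^{-D}<2$. Next fix $n_0\geq2$ and choose $B\geq C_1$ so large that $u_{i,n}\leq B$ for all $r\leq i\leq s$ and $0\leq n\leq n_0$. This is possible because only finitely many values are involved. To force the maximum to be attained, we introduce for each $\epsilon>0$ the function $v^\epsilon_{i,n}\coloneqq u_{i,n}-w_{i,n}-\epsilon n$ on $r\leq i\leq s$, $n\geq1$. The term $\epsilon n$ has vanishing spatial and temporal Laplacians. Hypothesis~\ref{lemma:two-variable-maximum-principle-infinitesimal} gives $u_{i,n}=o(n)$, and $w\geq0$, so $v^\epsilon_{i,n}\to-\infty$ as $n\to\infty$ uniformly in the finitely many $i$. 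Hence $v^\epsilon$ attains its supremum at some $(i_*,n_*)$.

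Suppose this maximum were positive. By hypothesis~\ref{lemma:two-variable-maximum-principle-bounded} and $B\geq C_1$, we have $v^\epsilon\leq0$ at $i\in\{r,s\}$, and the choice of $B$ gives $v^\epsilon\leq0$ for $n\leq n_0$. So $r<i_*<s$ and $n_*>n_0\geq2$, and all four neighbours lie in the domain. At an interior maximum both discrete Laplacians of $v^\epsilon$ are nonnegative. This gives
\[
(\Delta_{\mathrm{sp}}u)_{i_*,n_*}\geq 4\log n_*+2D
\quad\text{and}\quad
(\Delta_{\mathrm{tm}}u)_{i_*,n_*}\geq(\Delta_{\mathrm{tm}}w)_{i_*,n_*}\geq 2n_*^{-2}.
\]
Hypothesis~\ref{lemma:two-variable-maximum-principle-temporal-estimate} combined with the first inequality yields $(\Delta_{\mathrm{tm}}u)_{i_*,n_*}\leq C_0e^{-D}n_*^{-2}<2n_*^{-2}$, which contradicts the second. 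Hence $v^\epsilon\leq0$ everywhere. Letting $\epsilon\to0$ gives $u_{i,n}\leq B+Dg(i)+2g(i)\log n$, which is \eqref{eq:affine-interval-conclusion} with $C'_{u,r,s}=B+D(s-r)^2/4$.

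The main obstacle is the competition between the barrier's temporal convexity, of order $n^{-2}$, and the permitted temporal defect $C_0e^{-\Delta_{\mathrm{sp}}u/2}$. Both are of order $n^{-2}$ exactly when the coefficient of $\log n$ is $2g(i)$, so the constants must be tuned rather than the exponent. The device that makes this work is to absorb $C_0$ through the spatial shift $D\,g(i)$, which costs nothing temporally. The other delicate points are the non-compactness in $n$, handled by the $\epsilon n$ tilt together with the sublinearity hypothesis, and the small $n$ where $\log(n-1)$ is undefined, handled through $n_0$ and $B$.
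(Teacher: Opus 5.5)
Your proof is correct and follows the paper's argument step for step. Your barrier $B+g(i)(2\log n+D)$ is the paper's $v_{i,n}=C_2+b_i\log(C_3n)$ with $B=C_2$ and $D=2\log C_3$, and you likewise use the $\epsilon n$ tilt, the comparison at $i\in\{r,s\}$ and small $n$, and the clash at an interior maximum between $(\Delta_{\mathrm{tm}}w)\geq 2g(i)n^{-2}$ and the bound $C_0e^{-D}n^{-2}$ from hypothesis (ii). The differences are cosmetic: you tune the constant via $g(i)\geq1$ rather than $\min_j b_j$, and you dispose of $s-r=1$ vacuously rather than as a separate case.
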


\begin{proof}
If $s-r=1$, then the conclusion follows immediately from \eqref{eq:affine-interval-bounded} with $C'_{u,r,s}=C_1$.
We therefore assume that $s-r\geq2$.

Denote $b_i\coloneqq 2(i-r)(s-i)$ and choose $C_2,C_3\ge1$ such that
\begin{align}
C_2 &\geq \max\Bigl\{C_1, \, \max_{r\le i\le s}\{u_{i,1}\}\Bigr\} \label{eq:C2}, \\
C_0C_3^{-2}&<\min_{r<j<s}b_j=2(s-r-1). \label{eq:C3}
\end{align}
We shall show that it suffices to take $\displaystyle C'_{u,r,s}\geq C_2 + \frac{1}{2}(s-r)^2\log C_3$.

For $r\le i\le s$ and $n\geq1$, define
\[
v_{i,n}\coloneqq C_2+b_i\log(C_3n).
\]
By \eqref{eq:affine-interval-bounded} and the choice \eqref{eq:C2} of $C_2$, $u\le v$ on the boundary of the discrete half-strip $\{r,\ldots,s\}\times\bZ_{n\geq1}$, i.e., $u_{i,n}\le v_{i,n}$ for $i=r,s$, or $n=1$.

Fix an arbitrary auxiliary $\epsilon>0$.
For $r\le i\le s$ and $n\geq1$, set
\[
w_{i,n}\coloneqq u_{i,n}-v_{i,n}-\epsilon n.
\]
Then according to \eqref{eq:affine-interval-root-limit}, $\displaystyle\lim_{n\to\infty}w_{i,n}=-\infty$ uniformly for all $r\le i\le s$.
If $w$ were positive somewhere, it would attain a positive global maximum at some $r<i_\circ<s$ and $n_\circ\geq2$.
It follows that $(\Delta_{\mathrm{sp}} w)_{i_\circ,n_\circ}\geq0$ and hence
\[
(\Delta_{\mathrm{sp}} u)_{i_\circ,n_\circ}\geq (\Delta_{\mathrm{sp}} v)_{i_\circ,n_\circ}=4\log(C_3n_\circ).
\]
Combining this with \eqref{eq:affine-interval-temporal-estimate} and \eqref{eq:C3} yields that
\[
\bigl|(\Delta_{\mathrm{tm}} u)_{i_\circ,n_\circ}\bigr|
\leq C_0\exp\Bigl(-\frac{1}{2}(\Delta_{\mathrm{sp}} u)_{i_\circ,n_\circ}\Bigr)
\leq \frac{C_0 C_3^{-2}}{n_\circ^2}
<\frac{b_{i_\circ}}{n_\circ^2}.
\]
On the other hand, by $(\Delta_{\mathrm{tm}} w)_{i_\circ,n_\circ}\geq0$ and the fact that $\log(1+x)\le x$ for $x>-1$, one has
\[
(\Delta_{\mathrm{tm}} u)_{i_\circ,n_\circ}\geq(\Delta_{\mathrm{tm}} v)_{i_\circ,n_\circ}=b_{i_\circ}\log\frac{n_\circ^2}{n_\circ^2-1}=-b_{i_\circ}\log\Bigl(1-\frac{1}{n_\circ^2}\Bigr)\geq\frac{b_{i_\circ}}{n_\circ^2},
\]
a contradiction.
It follows that $w\leq0$, i.e., for every $r\le i\le s$ and $n\geq1$,
\[
u_{i,n}\leq v_{i,n}+\epsilon n.
\]
Since both $u$ and $v$ are independent of $\epsilon$, by letting $\epsilon\to0^+$, we obtain
\[
u_{i,n}\leq v_{i,n}=C_2+b_i\log C_3+b_i\log n\leq C'_{u,r,s}+b_i\log n.\qedhere
\]
\end{proof}

\begin{proof}[Proof of \cref{thm:affine-interval-growth}]
Fix a maximal affine interval $[r,s]$ of $(\log\lambda_i(f))_i$, and set
\[
u_{i,n}\coloneqq\log\frac{\deg_i(f^n)}{\lambda_i(f)^n},\qquad
r\leq i\leq s,\quad n\geq0.
\]
By the definition \eqref{eq:intro-dynamical-degree} of the dynamical degrees $\lambda_i(f)$, for every $r\le i\le s$,
\[
\lim_{n\to\infty} \frac{u_{i,n}}{n}=0.
\]

For every $r<j<s$, if any, the affineness of $[r,s]$ yields $\lambda_j(f)^2=\lambda_{j-1}(f)\lambda_{j+1}(f)$.
Hence by \cref{thm:degree-log-laplacian}, there is $C'_{f,\omega,j}>0$ such that for every $n\geq1$,
\[
\bigl|(\Delta_{\mathrm{tm}} u)_{j,n}\bigr|
\leq C'_{f,\omega,j}\, \frac{\sqrt{\deg_{j-1}(f^n)\deg_{j+1}(f^n)}}{\deg_j(f^n)}
=C'_{f,\omega,j}\exp\Bigl(-\frac{1}{2}(\Delta_{\mathrm{sp}} u)_{j,n}\Bigr).
\]
Taking the maximum of the finitely many $C'_{f,\omega,j}$, we verify \eqref{eq:affine-interval-temporal-estimate} for $u$.

It remains to verify boundedness of $u$ at the boundary indices $i=r$ and $i=s$.
If $r>0$, then maximality of $[r,s]$, together with the log-concavity of the dynamical degrees, implies that $r$ is a strict dynamical-degree break, so \cref{thm:strict-break-asymptotics} has shown that $(u_{r,n})_{n\geq0}$ is bounded (see also \cref{rmk:C-f-omega-i-limits}).
If $r=0$, then $u_{0,n}=\log\vol_X(\omega)$ is constant.
Similarly, $(u_{s,n})_n$ is bounded: if $s<d$, then $s$ is a strict break, while if $s=d$, then $u_{d,n}=\log\vol_X(\omega)$ is also constant.

Therefore, by \cref{lemma:two-variable-maximum-principle}, there is a constant $C'_{u,r,s}>0$ such that for every $r\leq i\leq s$ and $n\geq1$,
\[
u_{i,n}\leq C'_{u,r,s}+2(i-r)(s-i)\log n,
\]
or equivalently,
\[
\deg_i(f^n)\leq e^{C'_{u,r,s}}\, n^{2(i-r)(s-i)}\lambda_i(f)^n.
\]
Hence \eqref{eq:affine-interval-growth} holds with $C_{f,\omega,r,s}\coloneqq e^{C'_{u,r,s}}$.

Finally, note that every index $0\leq i\leq d$ lies in some maximal affine interval $[r,s]$ and $(i-r)(s-i)\leq i(d-i)$.
So by taking $C_{f,\omega}$ as the maximum of the finitely many $C_{f,\omega,r,s}$, the global estimate \eqref{eq:global-polynomial-growth} follows.
Combining \eqref{eq:global-polynomial-growth} with \eqref{eq:standard-exponential-lower-bound} and taking logarithms, we obtain \eqref{eq:global-log-growth}.
This concludes the proof of \cref{thm:affine-interval-growth}.
\end{proof}

\begin{proof}[Proof of \cref{cor:no-intermediate-growth}]
Since $\lambda_i(f)=1$, the upper bound in \eqref{eq:global-polynomial-growth} reduces to \eqref{eq:zero-entropy-polynomial-bound}.
By \eqref{eq:intro-dynamical-degree}, subexponential $i$-th degree growth is equivalent to $\lambda_i(f)=1$.
The same estimate therefore excludes subexponential but superpolynomial degree growth.
\end{proof}

\section{Further consequences and sharpness}
\label{sec:further-consequences}

We give an application of \cref{thm:affine-interval-growth} to volume growth of iterated graphs $\Gamma_{[n]}$ (cf.~\cite{Gromov03,DS05a,CPR21}).
We also discuss some further consequences of \cref{thm:strict-break-asymptotics}, and its sharpness and endpoint behavior.

\subsection{Volume growth of iterated graphs}\label{subsec:volume-growth}

Assume that either \cref{setting:projective} or \cref{setting:kahler} holds.
Recall from \S\ref{sec:orbit-realizations} that $Y_n$ denotes the normalization, respectively a compact K\"ahler resolution, of the iterated graph $\Gamma_{[n]}\subseteq X^{n+1}$, that is the closure of the image of
\[
x\longmapsto (x,f(x),\ldots,f^n(x))\in X^{n+1}.
\]
Let $\omega_{m,n}$ be the pullback of $\omega$ under the induced $m$-th coordinate projection $\pi_{m,n}\colon Y_n\to X$, indexed by $0\leq m\leq n$.
Define the \emph{volume} of $\Gamma_{[n]}$ by
\[
\vol(\Gamma_{[n]})\coloneqq \frac{1}{d!}\left(\left(\sum_{m=0}^n\omega_{m,n}\right)^{\!d}\right)_{Y_n}.
\]

\begin{corollary}\label{cor:volume-growth}
Assume that either \cref{setting:projective} or \cref{setting:kahler} holds.
Denote
\[
\Lambda(f)\coloneqq\max_{1\leq i\leq d}\lambda_i(f),\qquad I_{\max}\coloneqq\{i:\lambda_i(f)=\Lambda(f)\}=[r,s]\cap\bZ,\qquad \ell\coloneqq s-r.
\]
Then there are constants $c_{\vol},C_{\vol}>0$ such that for every $n\geq1$,
\begin{equation}\label{eq:volume-growth}
c_{\vol}\, \Lambda(f)^n\leq \vol(\Gamma_{[n]})\leq C_{\vol}\, n^{(\ell^3+2\ell)/3}\Lambda(f)^n.
\end{equation}
In particular, if $f$ is cohomologically hyperbolic, then $\vol(\Gamma_{[n]})\asymp\Lambda(f)^n$; if $f$ has zero entropy, then $\vol(\Gamma_{[n]})=O\bigl(n^{(d^3+2d)/3}\bigr)$.
\end{corollary}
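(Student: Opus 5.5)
The lower bound is immediate, and the upper bound comes from expanding the $d$-th power multinomially and controlling each mixed intersection by the degree sequences via \cref{thm:affine-interval-growth}. For the lower bound, all classes $\omega_{m,n}$ are nef, so every term in the expansion of $\bigl(\sum_m\omega_{m,n}\bigr)^d$ is nonnegative by \cref{prop:geometric-HL}. Fix $i\in I_{\max}$ and keep only the term $\binom{d}{i}(\omega_{0,n}^{d-i}\cdot\omega_{n,n}^{i})_{Y_n}$. By \cref{lemma:projective-two-time-shift} (resp.\ \cref{lemma:kahler-two-time-shift}) this term equals $\binom{d}{i}\deg_i(f^n)$, which is $\ge C_{X,\omega,i}^{-1}\Lambda(f)^n$ by \eqref{eq:standard-exponential-lower-bound}. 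If $I_{\max}$ contains only $d$, the same argument works with $i=d$.

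For the upper bound, write the volume as a sum over ordered tuples $0\le m_1\le\cdots\le m_d\le n$ of mixed intersections $Q(\omega_{m_1,n},\dots,\omega_{m_d,n})$. There are only polynomially many such tuples, but I will instead organize them by their gaps.

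\emph{Step 1 (shift).} Re-base time using the morphism $Y_n\to Y_{n-m_1}$ induced by the coordinates $m_1,\dots,n$. Exactly as in \cref{lemma:projective-two-time-shift}, this map has degree $\lambda_d(f)^{m_1}$ and pulls $\omega_{m-m_1,n-m_1}$ back to $\omega_{m,n}$. Hence each term equals $\lambda_d(f)^{m_1}$ times a mixed intersection with $m_1=0$.

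\emph{Step 2 (gap bound).} With gaps $g_k=m_{k+1}-m_k$, I aim to prove
\[
Q(\omega_{0},\omega_{m_2},\dots,\omega_{m_d})\le C\prod_{k=1}^{d-1}\frac{\deg_{k}(f^{g_k})}{\deg_{k-1}(f^{g_k})}.
\]
The first tool is the adjacent comparison \cref{lemma:projective-adjacent-comparison}, which allows classes with equal times to be grouped. The second is an inductive Dinh--Sibony/Siu-type inequality. This inequality bounds a mixed intersection involving a new time slot by the product of a lower-order intersection on the pushforward and a degree ratio. The ratio is controlled via the spatial log-concavity \eqref{eq:projective-degree-log-concavity}, which makes $k\mapsto\deg_k/\deg_{k-1}$ nonincreasing. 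I expect this step to be the main obstacle. To get it, I would first try induction on $d$, pushing forward to the graph $\widetilde\Gamma_{g_1}$ and slicing by $q^*\omega$.

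\emph{Step 3 (summing).} By \cref{thm:affine-interval-growth} together with \eqref{eq:standard-exponential-lower-bound}, each ratio satisfies
\[
\frac{\deg_k(f^g)}{\deg_{k-1}(f^g)}\le C\,g^{2(k-r)(s-k)}\,\frac{\lambda_k(f)}{\lambda_{k-1}(f)}^{\,g}.
\]
Multiplying over $k$ and including the factor $\lambda_d^{m_1}$, the exponential part is at most $\Lambda(f)^{n}$. This uses log-concavity of $(\lambda_k)$: a product of the form $\prod_k(\lambda_k/\lambda_{k-1})^{g_k}\cdot\lambda_d^{m_1}$ with $\sum g_k+m_1\le n$ is maximized by putting all weight on a single index $i\in I_{\max}$. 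The terms are then exponentially smaller unless every gap with $k\notin I_{\max}$ stays bounded. Summing the remaining polynomial factors over the at most $\ell+1$ unbounded gaps gives a polynomial exponent. This is the sum $\sum_{i\in I_{\max}}2(i-r)(s-i)=(\ell^3-\ell)/3$, plus $\ell$ from the number of free gap variables, which totals $(\ell^3+2\ell)/3$.

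The two special cases follow directly. If $f$ is cohomologically hyperbolic, then $\ell=0$, so the upper bound is $O(\Lambda(f)^n)$ and $\vol(\Gamma_{[n]})\asymp\Lambda(f)^n$. If $f$ has zero entropy, then $\lambda_i(f)=1$ for all $i$, so $I_{\max}=[0,d]$, $\ell=d$ and $\Lambda(f)=1$, giving $\vol(\Gamma_{[n]})=O\bigl(n^{(d^3+2d)/3}\bigr)$.
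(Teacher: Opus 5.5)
Your lower bound is fine, up to the harmless factor $1/d!$ in the definition of $\vol(\Gamma_{[n]})$, and the shift in Step~1 is essentially fine. The problem is Step~2, which carries the whole upper bound: it is not only unproved but false as stated. Take the term $Q(\omega_{0,n}^{[d-i]},\omega_{n,n}^{[i]})=\deg_i(f^n)$ with $1\le i\le d-1$. In your increasing ordering its only nonzero gap is $g_{d-i}=n$, so your inequality would give $\deg_i(f^n)\le C\deg_{d-i}(f^n)/\deg_{d-i-1}(f^n)$. By log-concavity the right-hand side grows at rate $\lambda_{d-i}(f)/\lambda_{d-i-1}(f)\le\lambda_1(f)$. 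For the squaring map $(x_0^2:\cdots:x_3^2)$ on $\bbP^3$ with $i=2$ this reads $4^n\le C\,2^n$. The same rate count shows that your Step~2 would force $\vol(\Gamma_{[n]})$ to grow at most like $\max\{\lambda_1(f),\lambda_d(f)\}^n$ up to polynomial factors. That contradicts your own lower bound whenever $\Lambda(f)>\max\{\lambda_1(f),\lambda_d(f)\}$.

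The underlying issue is that across the gap $g_k$ there are $d-k$ later classes, so the correct factor is the full degree $\deg_{d-k}(f^{g_k})$, not a ratio. Neither tool you name produces it. \cref{lemma:projective-adjacent-comparison} costs $K^{g}$ across a gap of length $g$. A Siu-type comparison $\omega_{\ell,n}\preceq d\,\deg_1(f^{\ell-m})\vol_X(\omega)^{-1}\omega_{m,n}$ moves one class at a time, so it only gives the rate $\lambda_1(f)^{(d-k)g_k}$, which in general exceeds $\lambda_{d-k}(f)^{g_k}$.

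The missing ingredient is the reverse Khovanskii--Teissier inequality for nef classes,
\[
(A^d)(B_1\cdots B_d)\le 2^{j(d-j)}(A^{d-j}B_1\cdots B_j)(A^jB_{j+1}\cdots B_d),
\]
which splits off a whole block of later classes at once. Order the times decreasingly, $m_d\le\cdots\le m_1$. Apply the inequality with $A=\omega_{m_{j+1},n}$, $B_1=\cdots=B_j=\omega_{m_j,n}$ and $(B_{j+1},\ldots,B_d)=(\omega_{m_{j+1},n},\ldots,\omega_{m_d,n})$. By \cref{lemma:projective-two-time-shift} (resp.\ \cref{lemma:kahler-two-time-shift}), $(A^d)=\vol_X(\omega)\lambda_d(f)^{m_{j+1}}$ and $(\omega_{m_j,n}^{j}\cdot\omega_{m_{j+1},n}^{d-j})=\lambda_d(f)^{m_{j+1}}\deg_j(f^{m_j-m_{j+1}})$. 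The powers of $\lambda_d(f)$ therefore cancel, and iterating over $j=1,\ldots,d-1$ yields
\[
(\omega_{m_1,n}\cdots\omega_{m_d,n})_{Y_n}\le C\prod_{i=0}^{d}\deg_i(f^{k_i}),\qquad k_0=n-m_1,\quad k_j=m_j-m_{j+1},\quad k_d=m_d,
\]
with $k_0+\cdots+k_d=n$. No separate shift step is needed.

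The volume is then bounded by a $(d+1)$-fold convolution, and your summation argument applies to this bound. By \cref{thm:affine-interval-growth}, $\deg_i(f^k)/\Lambda(f)^k$ is $O(k^{2(i-r)(s-i)})$ for $i\in I_{\max}$ and decays exponentially otherwise. For $i\notin I_{\max}$ only $\lambda_i(f)<\Lambda(f)$ matters; the polynomial exponent there comes from $i$'s own maximal affine interval, not from $[r,s]$. This gives exactly $\ell+\sum_{i\in I_{\max}}2(i-r)(s-i)=(\ell^3+2\ell)/3$. Your final count is this count, but it belongs to the factorization $\prod_i\deg_i(f^{k_i})$; it does not follow from your ratio-form Step~2, whose exponential rates are not indexed by $I_{\max}$ at all.
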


\begin{proof}
Fix $n\geq 1$ and write $\omega_m\coloneqq\omega_{m,n}$ for $0\leq m\leq n$.
We first claim that for every ordered $d$-tuple $0\leq m_d\leq\dots\leq m_1\leq n$, one has
\begin{equation}\label{eq:volume-gap}
\bigl(\omega_{m_1}\cdots\omega_{m_d}\bigr)_{Y_n}
\leq \frac{\prod_{j=1}^{d-1}2^{j(d-j)}}{(\omega^d)_X^{d}}\prod_{i=0}^{d}\deg_{i}(f^{k_i}),
\end{equation}
where $k_0=n-m_1$, $k_{j}=m_j-m_{j+1}$ for $1\leq j<d$, and $k_d=m_d$ (cf.~\cite[Equation~(3.3)]{CPR21}).
Indeed, for $1\leq j\leq d-1$, applying the reverse Khovanskii--Teissier inequality \cite[Theorem~2.5]{HX24} (cf.~\cite[Remark~3.1]{Xiao15}, \cite[Theorem~5.7]{LX17}, \cite[Theorem~3.4.3]{Dang20}, \cite[Theorem~3.5]{JL23})
\[
\bigl(A^{d}\bigr)_{Y_n}\bigl(B_1\cdots B_d\bigr)_{Y_n}\le 2^{j(d-j)} \bigl(A^{d-j} B_1\cdots B_j\bigr)_{Y_n} \bigl(A^j B_{j+1}\cdots B_d\bigr)_{Y_n}
\]
with $A=\omega_{m_{j+1}}$, $B_1=\dots=B_j=\omega_{m_j}$, and $B_{j+1}=\omega_{m_{j+1}},\ldots,B_d=\omega_{m_d}$, we get
\begin{equation}\label{eq:Xiao-rKT}
\bigl(\omega_{m_{j+1}}^d\bigr)_{Y_n}\bigl(\omega_{m_j}^j\omega_{m_{j+1}}\cdots\omega_{m_d}\bigr)_{Y_n}
\leq 2^{j(d-j)}\bigl(\omega_{m_j}^j\omega_{m_{j+1}}^{d-j}\bigr)_{Y_n}\bigl(\omega_{m_{j+1}}^{j+1}\omega_{m_{j+2}}\cdots\omega_{m_d}\bigr)_{Y_n}.
\end{equation}
On the other hand, the two-time shift identity (see \cref{lemma:projective-two-time-shift,lemma:kahler-two-time-shift}) yields
\[
\bigl(\omega_{m_{j+1}}^d\bigr)_{Y_n}=(\omega^d)_X\lambda_d(f)^{m_{j+1}},\qquad \bigl(\omega_{m_j}^j\cdot\omega_{m_{j+1}}^{d-j}\bigr)_{Y_n}=\lambda_d(f)^{m_{j+1}}\deg_j(f^{k_j}).
\]
Iterating the preceding inequality \eqref{eq:Xiao-rKT} for $j=1,\ldots,d-1$ thus implies
\[
\bigl(\omega_{m_1}\cdots\omega_{m_d}\bigr)_{Y_n}\leq
\frac{\prod_{j=1}^{d-1}2^{j(d-j)}}{(\omega^d)_X^{d-1}}\bigl(\omega_{m_d}^d\bigr)_{Y_n}\prod_{j=1}^{d-1}\deg_j(f^{k_j}).
\]
Since $\deg_0(f^{k_0})=(\omega^d)_X$ and $\bigl(\omega_{m_d}^d\bigr)_{Y_n}=\deg_d(f^{m_d})=\deg_d(f^{k_d})$, this proves \eqref{eq:volume-gap}.

Now expand $\vol(\Gamma_{[n]})$ and sort its indices as $0\leq m_d\leq\dots\leq m_1\leq n$, with a cost at most $d!$.
By \eqref{eq:volume-gap}, there exists a constant $C_1>0$, independent of $n$, such that
\begin{equation}\label{eq:volume-convolution}
\vol(\Gamma_{[n]})\leq C_1\sum_{\substack{k_0,\ldots,k_d\geq0\\[2pt]k_0+\cdots+k_d=n}}
\prod_{i=0}^d\deg_i(f^{k_i}).
\end{equation}
By \cref{thm:affine-interval-growth} and the definitions of $\Lambda(f)$ and $I_{\max}$, there are constants $C_2>0$ and $q\in(0,1)$ such that, for every $k\geq1$,
\[
\frac{\deg_i(f^k)}{\Lambda(f)^k}\leq
\begin{cases}
C_2k^{2(i-r)(s-i)},&i\in I_{\max},\\
C_2q^k,&i\in \{0,\ldots,d\}\setminus I_{\max}.
\end{cases}
\]
Hence only the indices within $I_{\max}$ contribute to the growth of $\vol(\Gamma_{[n]})/\Lambda(f)^n$.
After summing over the indices $i\in I_{\max}$, it is bounded above by a polynomial in $n$ of degree at most
\[
\ell+\sum_{i=r}^{s}2{(i-r)}(s-i)=\ell+\sum_{j=0}^{\ell}2j(\ell-j)=\frac{\ell^3+2\ell}{3}.
\]
This proves the upper bound in \eqref{eq:volume-growth}.
Finally, by \eqref{eq:standard-exponential-lower-bound}, we have that
\[
\vol(\Gamma_{[n]})\geq \frac{\binom{d}{r}}{d!}\bigl(\omega_{0}^{d-r}\cdot\omega_{n}^{r}\bigr)_{Y_n}=\frac{\deg_r(f^n)}{r!(d-r)!}\geq \frac{\Lambda(f)^n}{r!(d-r)!C_{X,\omega,r}},
\]
so the lower bound in \eqref{eq:volume-growth} follows.
\end{proof}

\begin{remark}
We \emph{expect} that a more delicate analysis of mixed intersections could improve the upper bound to $\vol(\Gamma_{[n]})=O\bigl(n^{\ell^2}\Lambda(f)^n\bigr)$.
Since $\lambda_1(f)=1$ implies $\ell=d$, whereas $\lambda_1(f)>1$ implies $\ell\leq d-1$, this would yield
\begin{equation}
\vol(\Gamma_{[n]})=
\begin{cases}
O\bigl(n^{d^2}\bigr), & \lambda_1(f)=1,\\
O\bigl(n^{(d-1)^2}\Lambda(f)^n\bigr), & \lambda_1(f)>1.
\end{cases}
\end{equation}
Note that when $f$ is a zero-entropy automorphism, the upper bound $\vol(\Gamma_{[n]})=O\bigl(n^{d^2}\bigr)$ is already known \cite[Corollary~1.2]{HJ25}.
\end{remark}

\subsection{Quotients and degree generating series at strict breaks}

\begin{corollary}\label{cor:quotient}
Under the hypotheses of \cref{thm:strict-break-asymptotics}, for every
\[
\frac{s_i(f)}{\lambda_i(f)}<q<1,
\]
one has
\[
\frac{\deg_{i,\omega}(f^{n+1})}{\deg_{i,\omega}(f^n)}=\lambda_i(f)\bigl(1+O(q^n)\bigr).
\]
\end{corollary}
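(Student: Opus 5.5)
The plan is to read the statement off the proof of \cref{thm:strict-break-asymptotics}, specifically the intermediate quantity $c_n=\log\bigl(\deg_i(f^n)/\deg_i(f^{n-1})\bigr)$ from \cref{lemma:discrete-integration}. Fix $q$ with $s_i(f)/\lambda_i(f)<q<1$, and set $a_n\coloneqq\deg_{i,\omega}(f^n)$ and $\eta\coloneqq q\lambda_i(f)$, so that $s_i(f)<\eta<\lambda_i(f)$.

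First, as in the proof of \cref{thm:strict-break-asymptotics}, the existence of the limits defining $\lambda_{i-1}(f)$, $\lambda_i(f)$ and $\lambda_{i+1}(f)$ gives
\[
\frac{\sqrt{\deg_{i-1}(f^n)\deg_{i+1}(f^n)}}{\deg_i(f^n)}=O(q^n).
\]
Then \cref{thm:degree-log-laplacian} yields $|\kappa_n|=O(q^n)$, where $\kappa_n\coloneqq\log\bigl(a_n^2/(a_{n-1}a_{n+1})\bigr)$.

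Second, apply \cref{lemma:discrete-integration} with $\alpha=\lambda_i(f)$. Its proof shows that the slopes $c_n=\log(a_n/a_{n-1})$ converge to $\log\lambda_i(f)$, with
\[
c_n-\log\lambda_i(f)=\sum_{m\geq n}\kappa_m=O(q^n).
\]
Applying this with $n+1$ in place of $n$ gives
\[
\log\frac{a_{n+1}}{a_n}=\log\lambda_i(f)+O(q^{n+1}).
\]
Exponentiating, and using $e^x=1+O(x)$ for bounded $x$, we obtain
\[
\frac{a_{n+1}}{a_n}=\lambda_i(f)\bigl(1+O(q^n)\bigr).
\]

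Alternatively, the statement follows directly from the asymptotic in \cref{thm:strict-break-asymptotics}. It gives $a_n=C\lambda_i(f)^n(1+O(q^n))$ with $C=C_{f,\omega,i}>0$. Dividing the formula at $n+1$ by the formula at $n$ gives the ratio $\lambda_i(f)(1+O(q^{n+1}))/(1+O(q^n))$. Since $1+O(q^n)$ is bounded away from $0$ for $n$ large, this equals $\lambda_i(f)(1+O(q^n))$. The finitely many small $n$ are absorbed into the implied constant, because every $a_n$ is positive.

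No step is a genuine obstacle. The only point requiring care is that the hypothesis is $q>s_i(f)/\lambda_i(f)$ and not $q\geq s_i(f)/\lambda_i(f)$. This strict inequality is what makes the root-limit comparison yield an $O(q^n)$ bound with a uniform constant.
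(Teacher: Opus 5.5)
Your proposal is correct. Your ``alternative'' argument is exactly the paper's proof: apply \cref{thm:strict-break-asymptotics} with $\eta=q\lambda_i(f)$ and take successive quotients. Your primary route through the slopes $c_n=\log(a_n/a_{n-1})$ is a slightly more direct variant: it uses only the exponential Cauchy estimate $c_n-\log\lambda_i(f)=\sum_{m\geq n}\kappa_m=O(q^n)$ from the proof of \cref{lemma:discrete-integration}, so it never needs the leading constant $C_{f,\omega,i}$ or its positivity.
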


\begin{proof}
Applying \cref{thm:strict-break-asymptotics} with $\eta=q\lambda_i(f)$, we obtain
\[
\deg_{i,\omega}(f^n)=C_{f,\omega,i}\,\lambda_i(f)^n\bigl(1+O(q^n)\bigr).
\]
So, by taking successive quotients, we prove the corollary.
\end{proof}

\begin{corollary}\label{cor:generating-series}
Under the hypotheses of \cref{thm:strict-break-asymptotics}, the $i$-th degree generating series
\[
\cD_{i,f,\omega}(z)\coloneqq\sum_{n\geq0}\deg_{i,\omega}(f^n)z^n
\]
admits a meromorphic continuation to the disk
\[
|z|<s_i(f)^{-1}=\bigl(\lambda_{i-1}(f)\lambda_{i+1}(f)\bigr)^{-1/2}.
\]
More precisely,
\begin{equation}\label{eq:generating-series}
\cD_{i,f,\omega}(z)=\frac{C_{f,\omega,i}}{1-\lambda_i(f)z}+\cR_{i,f,\omega}(z),
\end{equation}
where $\cR_{i,f,\omega}$ is holomorphic on $|z|<s_i(f)^{-1}$.
In particular, $z=\lambda_i(f)^{-1}$ is the unique simple pole in the disk $|z|<s_i(f)^{-1}$, with residue $-C_{f,\omega,i}/\lambda_i(f)$.
\end{corollary}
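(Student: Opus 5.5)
The plan is to split the generating series using the asymptotic of \cref{thm:strict-break-asymptotics} and check that the remainder series has a large enough radius of convergence. Define the remainder coefficients
\[
r_n\coloneqq \deg_{i,\omega}(f^n)-C_{f,\omega,i}\,\lambda_i(f)^n,\qquad n\geq0.
\]
Since $\sum_{n\geq0}C_{f,\omega,i}\lambda_i(f)^nz^n=C_{f,\omega,i}/(1-\lambda_i(f)z)$ for $|z|<\lambda_i(f)^{-1}$, we have, on that small disk,
\[
\cD_{i,f,\omega}(z)=\frac{C_{f,\omega,i}}{1-\lambda_i(f)z}+\cR_{i,f,\omega}(z),\qquad \cR_{i,f,\omega}(z)\coloneqq\sum_{n\geq0}r_nz^n.
\]
It then suffices to show that $\cR_{i,f,\omega}$ is holomorphic on $|z|<s_i(f)^{-1}$. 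The identity \eqref{eq:generating-series} will then give the meromorphic continuation.

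For the radius of convergence, fix any $\eta$ with $s_i(f)<\eta<\lambda_i(f)$. \Cref{thm:strict-break-asymptotics} gives $|r_n|=O(\eta^n)$, so $\limsup_n|r_n|^{1/n}\le\eta$. Letting $\eta\downarrow s_i(f)$ yields $\limsup_n|r_n|^{1/n}\le s_i(f)$. By Cauchy--Hadamard, $\cR_{i,f,\omega}$ therefore converges and is holomorphic on $|z|<s_i(f)^{-1}$. The right-hand side of \eqref{eq:generating-series} is thus meromorphic on this disk and agrees with $\cD_{i,f,\omega}$ near $0$, so it is the continuation.

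For the pole, the strict break $\lambda_i(f)>s_i(f)$ puts $z=\lambda_i(f)^{-1}$ inside the disk. Since $C_{f,\omega,i}>0$ and $\cR_{i,f,\omega}$ is holomorphic, the only pole is the simple one coming from the first term. Writing $C/(1-\lambda z)=-(C/\lambda)/(z-\lambda^{-1})$ shows the residue is $-C_{f,\omega,i}/\lambda_i(f)$.

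There is no real obstacle: the only point needing care is taking the infimum over $\eta$, since the implied constant in $O(\eta^n)$ depends on $\eta$. This is handled by passing to $\limsup|r_n|^{1/n}$ before letting $\eta\downarrow s_i(f)$.
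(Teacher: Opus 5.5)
Your proposal is correct and follows essentially the same route as the paper's proof: you define the remainder series $\cR_{i,f,\omega}$ from the coefficients $\deg_{i,\omega}(f^n)-C_{f,\omega,i}\lambda_i(f)^n$, use \cref{thm:strict-break-asymptotics} for every $\eta>s_i(f)$ to get $\limsup|r_n|^{1/n}\le s_i(f)$, and identify the decomposition on the original disk $|z|<\lambda_i(f)^{-1}$. Your explicit residue computation and your remark that the $\eta$-dependence of the implied constant is harmless once you pass to $\limsup|r_n|^{1/n}$ are both correct.
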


\begin{proof}
Denote
\[
\cR_{i,f,\omega}(z)\coloneqq\sum_{n\geq0}\,\bigl(\deg_{i,\omega}(f^n)-C_{f,\omega,i}\,\lambda_i(f)^n\bigr)z^n.
\]
Since \cref{thm:strict-break-asymptotics} holds for every $\eta>s_i(f)$,
\[
\limsup_{n\to\infty}\left|\deg_{i,\omega}(f^n)-C_{f,\omega,i}\,\lambda_i(f)^n\right|^{1/n}\leq s_i(f).
\]
Hence $\cR_{i,f,\omega}$ is holomorphic on the disk $|z|<s_i(f)^{-1}$.
On the original disk of convergence $|z|<\lambda_i(f)^{-1}$ for $\cD_{i,f,\omega}(z)$, we have
\[
\cD_{i,f,\omega}(z)=C_{f,\omega,i}\sum_{n\geq0}\lambda_i(f)^nz^n+\cR_{i,f,\omega}(z)=\frac{C_{f,\omega,i}}{1-\lambda_i(f)z}+\cR_{i,f,\omega}(z),
\]
which shows \eqref{eq:generating-series}.
The right-hand side provides the claimed continuation and pole.
\end{proof}

\begin{remark}[{cf.~\cite{BDJ20,Nguyen26}}]\label{rmk:BDJ-Nguyen}
When the characteristic of the ground field $\bk$ is not $2$, the meromorphic continuation radius $s_i(f)^{-1}$ in \cref{cor:generating-series} is optimal in every dimension $d\ge2$ and every intermediate codimension.
Indeed, set $d_1\coloneqq 2$ and choose a Bell--Diller--Jonsson surface map $f\colon \bbP^{d_1}\dashrightarrow \bbP^{d_1}$ (e.g., take $\zeta=-3+4\sqrt{-1}$ there), so that
\[
\lambda_1(f) \in \bigl(\sqrt{\lambda_2(f)}, \lambda_2(f)\bigr)\setminus \overline{\bQ}.
\]
Thanks to \cite[Corollary~B]{Nguyen26}, the first-degree generating series $\cD_{1,f,\cO(1)}$ has $|z|=s_1(f)^{-1}=\lambda_2(f)^{-1/2}$ as a natural boundary.

Fix $d\ge2$ and $1\le i\le d-1$.
Set $d_2\coloneqq i-1$ and $d_3\coloneqq d-i-1$.
Choose an integer $q>\lambda_2(f)$.
Let $g_q\colon\bbP^{d_2}\to\bbP^{d_2}$ be a $q$-polarized endomorphism and define
\[
F\coloneqq f\times g_q\times\id_{\bbP^{d_3}} \colon \bbP^{d_1}\times\bbP^{d_2}\times\bbP^{d_3} \dashrightarrow \bbP^{d_1}\times\bbP^{d_2}\times\bbP^{d_3}.
\]
Using the natural product polarization $H$ on $\bbP^{d_1}\times\bbP^{d_2}\times\bbP^{d_3}$, one can check that
\begin{equation}\label{eq:general-product-degree}
\deg_{k,H}(F^n)=
\sum_{\substack{k_1+k_2+k_3=k\\[3pt]
0\leq k_j\leq d_j}}
\binom{k}{k_1,k_2,k_3}\binom{d-k}{k'_1,k'_2,k'_3}\deg_{k_1,\cO(1)}(f^n)q^{k_2n}
\end{equation}
for every $1\le k\le d-1$, where $k'_j\coloneqq d_j-k_j$ for $j=1,2,3$.
It follows that
\[
\lambda_{i-1}(F)=q^{i-1},\qquad
\lambda_i(F)=q^{i-1}\lambda_1(f),\qquad
\lambda_{i+1}(F)=q^{i-1}\lambda_2(f),
\]
and hence
\[
s_i(F)=\sqrt{\lambda_{i-1}(F)\lambda_{i+1}(F)}=q^{i-1}\sqrt{\lambda_2(f)} < \lambda_i(F).
\]
Namely, $i$ is a strict dynamical-degree break for $F$, so \cref{cor:generating-series} applies to $F$ and $i$.

Now specializing \eqref{eq:general-product-degree} to $k=i$, the dominant term of $\deg_{i,H}(F^n)$, corresponding to $(k_1,k_2,k_3)=(1,i-1,0)$, is
\[
i(d-i)q^{(i-1)n}\deg_{1,\cO(1)}(f^n).
\]
Let $R_n$ be the sum of all remaining terms, where each one is $O\bigl(q^{(i-1)n}\bigr)$; in particular,
\[
\limsup_{n\to\infty}R_n^{1/n}\le q^{i-1}<q^{i-1}\sqrt{\lambda_2(f)}=s_i(F).
\]
It follows that the $i$-th degree generating series of $F$ with respect to $H$ has the form
\[
\cD_{i,F,H}(z)=i(d-i)\cD_{1,f,\cO(1)}(q^{i-1}z)+\cH(z),
\]
where $\cH(z)\coloneqq\sum_{n\geq0}R_nz^n$ is holomorphic on a disk strictly larger than $|z|<s_i(F)^{-1}$.
Therefore, the circle $|z|=s_i(F)^{-1}$ is a natural boundary of $\cD_{i,F,H}(z)$.
\end{remark}

\subsection{Sharpness and endpoint behavior}

The examples in the next proposition show that, unlike the surface case, in dimension at least three, the remainder $O(\eta^n)$ in \cref{thm:strict-break-asymptotics} cannot generally be improved to the endpoint $O\bigl(s_i(f)^n\bigr)$.

\begin{proposition}\label{prop:abelian-endpoint-failure}
Let $E$ be an elliptic curve over $\bk$, let $d\geq3$, and let
\[
\Theta\coloneqq \sum_{j=1}^d\pr_j^*[0]
\]
be the product principal polarization on $E^d$, where $\pr_j\colon E^d\to E$ denotes the $j$-th projection.
For every $1\leq i\leq d-1$, there is an isogeny $f=f_{d,i}\colon E^d\to E^d$ such that
\[
\lambda_{i-1}(f)=9^{i-1},\qquad
\lambda_i(f)=9^i,\qquad
\lambda_{i+1}(f)=4\cdot9^i.
\]
In particular, $s_i(f)=6\cdot9^{i-1}<\lambda_i(f)$.
Moreover, for some $c_{d,i}>0$,
\begin{equation}\label{eq:abelian-endpoint-asymptotic}
\deg_{i,\Theta}(f^n)=3i!(d-i)!\lambda_i(f)^n-c_{d,i}\,n s_i(f)^n+O\bigl(s_i(f)^n\bigr).
\end{equation}
Consequently, the constant $C_{f,\Theta,i}$ in \cref{thm:strict-break-asymptotics} equals $3i!(d-i)!$, and
\[
\deg_{i,\Theta}(f^n)-C_{f,\Theta,i}\,\lambda_i(f)^n\sim -c_{d,i}\,n s_i(f)^n.
\]
\end{proposition}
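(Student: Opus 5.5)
The plan is to realize $f$ as the isogeny of $E^d$ given by an integer matrix $M\in\Mat_d(\bZ)$, acting through $\bZ\subseteq\operatorname{End}(E)$, and to reduce every degree to linear algebra of real symmetric matrices.

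\textbf{Linear-algebra reduction.} For a symmetric integer matrix $S$, write $D_S$ for the associated class in $\N^1(E^d)_\bR$; this space always contains the classes $D_S$, so no non-CM assumption is needed. Then $\Theta=D_{I}$ and $f^*D_S=D_{M^{\T}SM}$. Multilinearity together with the normalization $(\Theta^d)=d!$ gives
\[
(D_{S_1}\cdots D_{S_d})_{E^d}=d!\,\mathrm{MD}(S_1,\ldots,S_d),
\]
where $\mathrm{MD}$ is the mixed discriminant. Consequently $(\Theta^{d-i}\cdot D_S^i)=i!(d-i)!\,e_i(S)$, with $e_i$ the $i$-th elementary symmetric function of the eigenvalues. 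Since $f$ is an endomorphism, $\deg_{i,\Theta}(f^n)=(\Theta^{d-i}\cdot(f^n)^*\Theta^i)$, and hence
\[
\deg_{i,\Theta}(f^n)=i!(d-i)!\,e_i\bigl((M^n)^{\T}M^n\bigr).
\]
This is the sum of the squares of all $i\times i$ minors of $M^n$ (Cauchy--Binet), i.e.\ $\|\wedge^iM^n\|_F^2$. It follows that $\lambda_k(f)$ is the product of the $k$ largest values of $|\mu|^2$ over the eigenvalues $\mu$ of $M$.

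\textbf{Choice of $M$.} I would take $M$ block upper triangular with diagonal
\[
(\underbrace{3,\ldots,3}_{i},\,2,\,1,\ldots,1).
\]
This diagonal already forces $\lambda_{i-1}=9^{i-1}$, $\lambda_i=9^i$ and $\lambda_{i+1}=4\cdot9^i$. The remaining freedom is a few off-diagonal integer entries, chosen to couple the eigenvalue-$2$ direction (and possibly one eigenvalue-$1$ direction) with the eigenvalue-$3$ block. The top $i\times i$ minors of $M^n$ live in $\wedge^i$, where $9^{in}$ comes from the $3$-block. Off-diagonal couplings produce cross terms between the unique dominant wedge direction ($3^{in}$) and the next one, which has modulus $3^{(i-1)n}\cdot 2^n$. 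The square of such a minor therefore contains an interference term of order $3^{in}\cdot3^{(i-1)n}2^n=(6\cdot9^{i-1})^n=s_i(f)^n$.

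The factor $n$ must come from a resonance within $\wedge^iM^n$: two distinct wedge-basis contributions share the same modulus $3^{(i-1)n}2^n$. For instance, replace one $3$ by $3$ and let one coupled $1$ make, together with a $2$ and a $3$, the products $3\cdot2$ agree across two paths. More concretely, arrange a length-two chain $3\to2$ and a second path through the same weight, so that the $(\text{dominant},\text{subdominant})$ entry of $\wedge^iM^n$ is a sum $\sum_{k}3^{ik}(3^{i-1}2)^{n-1-k}$ with coinciding ratios, giving $n(\cdot)^n$. I would first try $d=3,i=1$ by hand, using the $3\times3$ upper triangular matrix with diagonal $(3,2,1)$ or $(3,2,2)$; a repeated $2$ with a Jordan coupling gives a $2\times2$ block $(2^n, n2^{n-1})$ that feeds the $n6^n$ term. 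Separately, the off-diagonal entries must be tuned so that the leading coefficient $\|{\rm dominant\ column}\|^2$ equals $3$, e.g.\ one coupling vector of squared length $2$ realized by two entries equal to $1$. I would then embed this block into $E^d$ by a diagonal $3^{\oplus(i-1)}$ part and identity on the rest, using $e_i(D\oplus G)=\sum_j e_j(D)e_{i-j}(G)$ to reduce the general $(d,i)$ case to the small block.

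\textbf{Expansion and conclusion.} Expanding $\|\wedge^iM^n\|_F^2$ explicitly as an exponential polynomial in $n$ gives
\[
3\cdot9^{in}-c\,n(6\cdot9^{i-1})^n+O\bigl((6\cdot9^{i-1})^n\bigr)
\]
with $c>0$. All other terms are $O(s_i^n)$ or smaller, since they involve at most the moduli $9^{(i-1)n}4^n$ and $9^{(i-1)n}\cdot 2^n\cdot 1$. This yields \eqref{eq:abelian-endpoint-asymptotic}, and the identification $C_{f,\Theta,i}=3i!(d-i)!$ follows from \cref{rmk:C-f-omega-i-limits}.

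\textbf{Main obstacle.} I expect the main difficulty to be the simultaneous tuning of integer entries. They must give leading constant exactly $3$, a \emph{nonzero and negative} coefficient of $n s_i^n$, and no Jordan block at the top eigenvalue, since a Jordan block there would create $n^2 9^{in}$. That last requirement is why the resonance must be placed strictly below the top. The sign and explicit value of $c_{d,i}$ are the computation I would verify symbolically.
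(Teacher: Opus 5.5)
Your reduction is correct and matches the paper's claim: $\deg_{i,\Theta}(f^n)=i!(d-i)!\,\|\bigwedge\nolimits^iM^n\|_{\Frob}^2$. The paper derives it from Riemann--Roch, $\frac{1}{d!}(\Theta+t(f^n)^*\Theta)^d=\det(I_d+t(M^n)^{\T}M^n)$, which is your mixed-discriminant identity written as a generating function. Reading off $C_{f,\Theta,i}$ as the limit is also fine. The gap is that the proposition is an existence statement whose entire content is an explicit $M$ and an explicit expansion, and you supply neither. Moreover, the template you commit to cannot work for $i=1$. With diagonal $(3,2,1,\ldots,1)$ the eigenvalues $3$ and $2$ are simple, so every entry of $M^n$ has the form $\alpha3^n+\beta2^n+p(n)$ with $p$ a polynomial. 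The coefficient of $6^n$ in $\|M^n\|_{\Frob}^2$ is then the constant $2\sum\alpha\beta$, whatever off-diagonal entries you choose. In your framework, the only product of two eigenvalues of $\bigwedge\nolimits^iM$ equal to $s_i(f)$ is $3^i\cdot(2\cdot3^{i-1})$, and $3^i$ is simple. So a term $n\,s_i(f)^n$ requires a nontrivial Jordan block of $\bigwedge\nolimits^iM$ at $2\cdot3^{i-1}$; equal moduli are not enough. Your sum $\sum_k3^{ik}(3^{i-1}2)^{n-1-k}$ is geometric with ratio $3/2$ and yields no factor $n$. For $i=1$ this forces a repeated diagonal entry $2$ with a Jordan coupling, and this is where $d\geq3$ is used. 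You mention $(3,2,2)$ only in passing, and it contradicts the general diagonal you commit to.

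For $i\geq2$ your diagonal $(3^{\times i},2,1,\ldots,1)$ is the paper's. However, the generalized eigenspace of $\bigwedge\nolimits^iM$ at $2\cdot3^{i-1}$ is then $\bigwedge\nolimits^{i-1}G_3\otimes G_2$, with action $2\bigwedge\nolimits^{i-1}(M|_{G_3})$. The needed Jordan block therefore exists only if $M$ itself has a Jordan block at $3$. Coupling the $2$-direction to the $3$-block, as you propose, produces none, and for $i=d-1$ there are no $1$'s to use. Your rule ``no Jordan block at the top eigenvalue'' is right for $\bigwedge\nolimits^iM$, but applied to $M$ it would exclude the only available mechanism. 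The paper takes
\[
B_1=\begin{pmatrix}3&1&0\\0&2&1\\0&0&2\end{pmatrix},\qquad
B_2=\begin{pmatrix}3&1&0\\0&3&1\\0&0&2\end{pmatrix},
\]
with $M_{d,1}=B_1\oplus I_{d-3}$ and $M_{d,i}=3I_{i-2}\oplus B_2\oplus I_{d-i-1}$ for $i\geq2$. Then $\bigwedge\nolimits^2B_2$ is upper triangular with diagonal $(9,6,6)$ and a Jordan coupling at $6$. The explicit expansions are
\[
\|B_1^n\|_{\Frob}^2=3\cdot9^n-(n+4)6^n+O(n^24^n),\qquad
\|\bigwedge\nolimits^2B_2^n\|_{\Frob}^2=3\cdot81^n-(\tfrac23n+4)54^n+O(n^236^n).
\]
Combined with the orthogonal decomposition of $\bigwedge\nolimits^i$ that isolates the $(i-2,2,0)$ summand (all other summands have squared spectral radius at most $4\cdot9^{i-1}<s_i(f)$), these give the leading constant $3$ and a negative coefficient of $n\,s_i(f)^n$. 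The sign must genuinely be computed, not tuned abstractly. For example, $\begin{pmatrix}3&1&-2\\0&2&1\\0&0&2\end{pmatrix}$ also has leading coefficient $3$, but its $n6^n$ coefficient is $+1$.
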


\begin{proof}
Let
\[
B_1\coloneqq
\begin{pmatrix}
3&1&0\\
0&2&1\\
0&0&2
\end{pmatrix},
\qquad
B_2\coloneqq
\begin{pmatrix}
3&1&0\\
0&3&1\\
0&0&2
\end{pmatrix},
\]
and let $f=f_{d,i}$ be the isogeny of $E^d$ induced by
\[
M_{d,i}\coloneqq
\begin{cases}
B_1\oplus I_{d-3},&i=1,\\[2mm]
3I_{i-2}\oplus B_2\oplus I_{d-i-1},&2\leq i\leq d-1.
\end{cases}
\]

\begin{claim}\label{claim:degrees-matrix-norm}
For any $M\in \Mat_d(\bZ)$ with nonzero determinant, the induced isogeny $f_M$ of $E^d$ satisfies that for every $0\le j\le d$,
\begin{equation}\label{eq:abelian-matrix-degree}
\deg_{j,\Theta}(f_M^n)=j!(d-j)!\left\|\bigwedge\nolimits^jM^n\right\|_{\Frob}^2,
\end{equation}
where $\|\cdot\|_{\Frob}$ denotes the Frobenius norm of matrices.
In particular,
\[
\lambda_j(f_M)=\rho\Bigl(\bigwedge\nolimits^jM\Bigr)^2.
\]
\end{claim}

\begin{proof}[Proof of Claim \ref{claim:degrees-matrix-norm}]
Under the product principal polarization $\Theta$, $(f_M^n)^*\Theta$ corresponds to the symmetric matrix $(M^n)^{\T}M^n$.
Therefore, for every integer $t\geq0$, the polarization $\Theta+t (f_M^n)^*\Theta$ corresponds to
\[
I_d+t(M^n)^{\T}M^n.
\]
By the Riemann--Roch theorem for abelian varieties \cite[\S16]{Mumford} and the fact that $\deg(f_{M'})=(\det M')^2$ for any $M'\in\Mat_d(\bZ)$, we obtain
\[
\frac{1}{d!}\Bigl(\Theta+t\,(f_M^n)^*\Theta\Bigr)^d=\det\Bigl(I_d+t\,(M^n)^{\T}M^n\Bigr).
\]
Expanding both sides yields
\[
\sum_{j=0}^d\frac{\deg_{j,\Theta}(f_M^n)}{j!(d-j)!}\,t^j=\sum_{j=0}^d\tr\Bigl(\bigwedge\nolimits^j\bigl((M^n)^{\T}M^n\bigr)\Bigr)\,t^j.
\]
By functoriality of exterior powers,
\[
\bigwedge\nolimits^j\bigl((M^n)^{\T}M^n\bigr)=\bigwedge\nolimits^j (M^n)^{\T}\bigwedge\nolimits^j M^n=\Bigl(\bigwedge\nolimits^j M^n\Bigr)^{\T}\bigwedge\nolimits^j M^n,
\]
its trace is $\bigl\|\bigwedge^j M^n\bigr\|_{\Frob}^2$.
Comparing coefficients thus proves \eqref{eq:abelian-matrix-degree}.
The argument is algebraic and remains valid in arbitrary characteristic; see also \cite{Hu24,Hu-GK-AV}.
\end{proof}

We continue with the proof of \cref{prop:abelian-endpoint-failure}.
For every $1\le i\le d-1$, applying \cref{claim:degrees-matrix-norm} with $M=M_{d,i}$, whose $i+1$ eigenvalues of largest modulus are $\underbrace{3,\ldots,3}_{i\text{ copies}},2$, we have
\[
\lambda_{i-1}(f)=9^{i-1},\qquad
\lambda_i(f)=9^i,\qquad
\lambda_{i+1}(f)=4\cdot9^i,
\]
and hence $s_i(f)=6\cdot9^{i-1}<\lambda_i(f)$.

For $i=1$, a direct computation gives
\[
B_1^n=
\begin{pmatrix}
3^n&3^n-2^n&3^n-(n+2)2^{n-1}\\
0&2^n&n2^{n-1}\\
0&0&2^n
\end{pmatrix}.
\]
Consequently,
\[
\bigl\|B_1^n\bigr\|_{\Frob}^2=3\cdot9^n-(n+4)6^n+\frac{n^2+2n+8}{2}\,4^n.
\]
The identity block contributes only $d-3$, so
\[
\bigl\|M_{d,1}^n\bigr\|_{\Frob}^2=3\cdot9^n-n6^n+O(6^n).
\]

Now assume that $i\geq2$.
The second compound matrix of $B_2$ is
\[
\bigwedge\nolimits^2B_2=
\begin{pmatrix}
9&3&1\\
0&6&2\\
0&0&6
\end{pmatrix},
\]
and hence
\[
\bigwedge\nolimits^2B_2^n=
\begin{pmatrix}
9^n&9^n-6^n&9^n-(n/3+1)6^n\\
0&6^n&(n/3)6^n\\
0&0&6^n
\end{pmatrix}.
\]
It follows that
\begin{align*}
\left\|\bigwedge\nolimits^2B_2^n\right\|_{\Frob}^2=3\cdot81^n-\left(\frac{2}{3}n+4\right)54^n+\left(\frac{2}{9}n^2+\frac{2}{3}n+4\right)36^n.
\end{align*}
Under the orthogonal decomposition
\[
\bigwedge\nolimits^i\bigl(\bR^{i-2}\oplus\bR^3\oplus\bR^{d-i-1}\bigr)=
\bigoplus_{i_1+i_2+i_3=i}\bigwedge\nolimits^{i_1}\bR^{i-2}\otimes\bigwedge\nolimits^{i_2}\bR^3\otimes\bigwedge\nolimits^{i_3}\bR^{d-i-1},
\]
the summand in $\bigwedge\nolimits^iM_{d,i}^n$ corresponding to $(i_1,i_2,i_3)=(i-2,2,0)$ contributes
\[
9^{(i-2)n}\left\|\bigwedge\nolimits^2B_2^n\right\|_{\Frob}^2
\]
to its squared Frobenius norm.
Every other summand has squared spectral radius at most $4\cdot9^{i-1}$, which is strictly smaller than $s_i(f)=6\cdot9^{i-1}$.
The polynomial factors arising from Jordan blocks are therefore absorbed by $O\bigl(s_i(f)^n\bigr)$.
Hence
\[
\left\|\bigwedge\nolimits^iM_{d,i}^n\right\|_{\Frob}^2=3(9^i)^n-\frac{2}{3}n\bigl(6\cdot9^{i-1}\bigr)^n+O\bigl(s_i(f)^n\bigr).
\]

Finally, applying \cref{claim:degrees-matrix-norm}, we prove \eqref{eq:abelian-endpoint-asymptotic} with $C_{f,\Theta,i}=3i!(d-i)!$ and
\[
c_{d,i}=
\begin{cases}
(d-1)!,&i=1,\\[1mm]
\displaystyle\frac{2}{3}i!(d-i)!,&i\geq2.
\end{cases}
\]
The remaining assertions of \cref{prop:abelian-endpoint-failure} follow readily.
\end{proof}

The endpoint estimate does hold when the adjacent degree sequences have their pure exponential growth.
In particular, \cref{cor:surface-endpoint}\ref{cor:surface-endpoint-strict-break} recovers \cite[Main Theorem]{BFJ08}.

\begin{lemma}\label{lemma:endpoint-criterion}
Under the hypotheses of \cref{thm:strict-break-asymptotics}, suppose in addition that
\begin{equation}\label{eq:adjacent-growth-condition}
\sqrt{\deg_{i-1,\omega}(f^n)\deg_{i+1,\omega}(f^n)}=O\bigl(s_i(f)^n\bigr).
\end{equation}
Then
\[
\deg_{i,\omega}(f^n)=C_{f,\omega,i}\,\lambda_i(f)^n+O\bigl(s_i(f)^n\bigr).
\]
\end{lemma}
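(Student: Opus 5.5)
We prove the endpoint estimate by redoing the summation argument of \cref{lemma:discrete-integration}, now with the sharper input supplied by \eqref{eq:adjacent-growth-condition}. Write $a_n=\deg_{i,\omega}(f^n)$, $\lambda=\lambda_i(f)$, $s=s_i(f)$, and $\kappa_n=\log\bigl(a_n^2/(a_{n-1}a_{n+1})\bigr)$.

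First, \cref{thm:strict-break-asymptotics} already gives $a_n=C_{f,\omega,i}\lambda^n(1+o(1))$. In particular there is $c>0$ with $a_n\geq c\lambda^n$ for all $n\geq0$. Feeding this lower bound and \eqref{eq:adjacent-growth-condition} into \cref{thm:degree-log-laplacian}, we get
\[
|\kappa_n|\leq C'_{f,\omega,i}\,\frac{\sqrt{\deg_{i-1,\omega}(f^n)\deg_{i+1,\omega}(f^n)}}{\deg_{i,\omega}(f^n)}=O\bigl((s/\lambda)^n\bigr).
\]
Thus the hypothesis \eqref{eq:abstract-kappa-bound} of \cref{lemma:discrete-integration} holds with the endpoint ratio $q=s/\lambda<1$ itself, not merely with every $q>s/\lambda$. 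Note that $s>0$, since $\lambda_{i\pm1}(f)\geq1$, so $q>0$ as the lemma requires.

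Next we apply \cref{lemma:discrete-integration} with $\alpha=\lambda$ and this $q$. The root-limit hypothesis $a_n^{1/n}\to\lambda$ is the definition of $\lambda_i(f)$. The lemma gives $a_n=C\lambda^n(1+O(q^n))$, and the constant $C$ is given by the same series as in \eqref{eq:constant-C-exp-sum}, so $C=C_{f,\omega,i}$. Multiplying out, the error term becomes $C\lambda^n\cdot O((s/\lambda)^n)=O(s^n)$, which is the claim.

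The argument has no real obstacle. The only point needing care is that \cref{lemma:discrete-integration} yields an error of exactly $O(q^n)$ when $|\kappa_n|=O(q^n)$: the tail sum $\sum_{m>n}(c_m-\log\alpha)=\sum_{m>n}\sum_{k\geq m}\kappa_k$ is a double geometric sum, hence still $O(q^n)$ with no polynomial loss. The lemma's proof already records this, so we can invoke it as stated.
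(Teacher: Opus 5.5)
Your proposal is correct and follows essentially the same route as the paper's proof. Both use the positive limit from \cref{thm:strict-break-asymptotics} to bound $\deg_{i,\omega}(f^n)$ below by a multiple of $\lambda_i(f)^n$, feed \eqref{eq:adjacent-growth-condition} into \cref{thm:degree-log-laplacian} to get $|\kappa_n|=O(q^n)$ with the endpoint ratio $q=s_i(f)/\lambda_i(f)$, and then apply \cref{lemma:discrete-integration}.
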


\begin{proof}
By \cref{thm:strict-break-asymptotics},
\[
\lim_{n\to\infty}\frac{\deg_{i,\omega}(f^n)}{\lambda_i(f)^n} = C_{f,\omega,i}>0.
\]
Let $q\coloneqq s_i(f)/\lambda_i(f)<1$.
Then the assumed \eqref{eq:adjacent-growth-condition} becomes
\[
\frac{\sqrt{\deg_{i-1,\omega}(f^n)\deg_{i+1,\omega}(f^n)}}{\deg_{i,\omega}(f^n)}=O(q^n).
\]
Substituting it into \eqref{eq:degree-log-laplacian} in \cref{thm:degree-log-laplacian} implies
\[
\left|\log\frac{\deg_{i,\omega}(f^n)^2}{\deg_{i,\omega}(f^{n-1})\deg_{i,\omega}(f^{n+1})}\right|=O(q^n).
\]
Now applying \cref{lemma:discrete-integration} to $a_n=\deg_{i,\omega}(f^n)$ and $\alpha=\lambda_i(f)$ yields
\[
\deg_{i,\omega}(f^n)=C_{f,\omega,i}\,\lambda_i(f)^n\bigl(1+O(q^n)\bigr)=C_{f,\omega,i}\,\lambda_i(f)^n+O\bigl(s_i(f)^n\bigr).\qedhere
\]
\end{proof}

\begin{corollary}[{cf.~\cite[Main Theorem]{BFJ08}}]\label{cor:surface-endpoint}
Assume that either \cref{setting:projective} or \cref{setting:kahler} holds, and $d=2$.
\begin{enumerate}[label=\textnormal{(\arabic*)}]
\item \label{cor:surface-endpoint-strict-break}
If $\lambda_1(f)^2>\lambda_2(f)$, then
\[
\deg_{1,\omega}(f^n)=C_{f,\omega,1}\,\lambda_1(f)^n+O\bigl(\lambda_2(f)^{n/2}\bigr).
\]
\item \label{cor:surface-endpoint-resonant}
If $\lambda_1(f)^2=\lambda_2(f)$, then there is $C>0$ such that for every $n\geq1$,
\[
\vol_X(\omega)\lambda_1(f)^n\leq\deg_{1,\omega}(f^n)\leq Cn^2\lambda_1(f)^n.
\]
\end{enumerate}
\end{corollary}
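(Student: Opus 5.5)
For part \ref{cor:surface-endpoint-strict-break}, we plan to apply \cref{lemma:endpoint-criterion} with $d=2$ and $i=1$. Its only extra hypothesis is \eqref{eq:adjacent-growth-condition}, which here reads
\[
\sqrt{\deg_{0,\omega}(f^n)\deg_{2,\omega}(f^n)}=O\bigl(\lambda_2(f)^{n/2}\bigr).
\]
This holds exactly in both settings. By definition, $\deg_{0,\omega}(f^n)=((p_n^*\omega)^2)_{\widetilde\Gamma_n}=\vol_X(\omega)$, since $p_n$ is birational. By the projection formula (or the two-time shift identity with $m=\ell$ on the orbit space), $\deg_{2,\omega}(f^n)=\deg(q_n)\vol_X(\omega)=\lambda_d(f)^n\vol_X(\omega)$, using $\deg(f^n)=\deg(f)^n=\lambda_2(f)^n$. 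The left-hand side is therefore exactly $\vol_X(\omega)\lambda_2(f)^{n/2}$, and $s_1(f)=\sqrt{\lambda_0(f)\lambda_2(f)}=\lambda_2(f)^{1/2}$. Since $\lambda_1(f)^2>\lambda_2(f)$ says precisely that $1$ is a strict break, \cref{lemma:endpoint-criterion} gives the claimed remainder $O(\lambda_2(f)^{n/2})$.

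For part \ref{cor:surface-endpoint-resonant}, the upper bound comes from \cref{thm:affine-interval-growth}. If $\lambda_1(f)^2=\lambda_2(f)$, then $(\log\lambda_i(f))_{i=0}^2$ is affine, so the maximal affine interval is $[0,2]$. With $i=1$ the exponent is $2\cdot1\cdot1=2$, so $\deg_{1,\omega}(f^n)\le Cn^2\lambda_1(f)^n$.

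For the lower bound, we want the sharp constant $\vol_X(\omega)$ rather than $C_{X,\omega,1}^{-1}$, so we plan to use the finite-time Khovanskii--Teissier inequality \eqref{eq:projective-degree-log-concavity} or \eqref{eq:kahler-degree-log-concavity}:
\[
\deg_1(f^n)^2\ge\deg_0(f^n)\deg_2(f^n)=\vol_X(\omega)^2\lambda_2(f)^n=\vol_X(\omega)^2\lambda_1(f)^{2n}.
\]
Taking square roots gives $\deg_{1,\omega}(f^n)\ge\vol_X(\omega)\lambda_1(f)^n$.

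There is no real obstacle here. The only point to check is the exact identities $\deg_0(f^n)=\vol_X(\omega)$ and $\deg_2(f^n)=\lambda_2(f)^n\vol_X(\omega)$ in the Kähler setting. These follow from \eqref{eq:kahler-degree-current} together with the fact that the generic topological degree is multiplicative under iteration, $\deg(f^n)=\deg(f)^n$; this multiplicativity is what makes $\lambda_d(f)=\deg(f)$ an exact rate rather than just a limit.
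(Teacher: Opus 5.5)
Your proposal is correct and follows the paper's proof essentially step for step. You feed the exact identities $\deg_{0,\omega}(f^n)=\vol_X(\omega)$ and $\deg_{2,\omega}(f^n)=\vol_X(\omega)\lambda_2(f)^n$ into \cref{lemma:endpoint-criterion} for part (1), and for part (2) you combine \cref{thm:affine-interval-growth} on the maximal affine interval $[0,2]$ with the finite-time Khovanskii--Teissier inequality to get the sharp lower bound, exactly as the paper does.
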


\begin{proof}
Note that for every $n\geq1$, $\deg_{0,\omega}(f^n)=\vol_X(\omega)$ and
\[
\deg_{2,\omega}(f^n)=\vol_X(\omega)\lambda_2(f)^n.
\]
It follows that $s_1(f)=\sqrt{\lambda_2(f)}$ and
\[
\sqrt{\deg_{0,\omega}(f^n)\deg_{2,\omega}(f^n)}=\vol_X(\omega)s_1(f)^n.
\]

If $\lambda_1(f)^2>\lambda_2(f)$, then $i=1$ is a strict dynamical-degree break for $f$.
Part~(1) thus follows from \cref{lemma:endpoint-criterion}.
Now assume $\lambda_1(f)^2=\lambda_2(f)$, i.e., $s_1(f)=\lambda_1(f)$.
Then $[0,2]$ is a maximal affine interval, so the upper bound in part~(2) follows from \cref{thm:affine-interval-growth}.
On the other hand, the Khovanskii--Teissier inequality implies
\[
\deg_{1,\omega}(f^n)^2\geq\deg_{0,\omega}(f^n)\deg_{2,\omega}(f^n)=\vol_X(\omega)^2\lambda_1(f)^{2n},
\]
for every $n\geq1$, which proves the stated lower bound.
\end{proof}

We conclude with a multipart question concerning the finer growth of degree sequences.
Following the limsup definition of Fan--Fu--Ouchi \cite[Definition~5.1]{FFO21}, define, for $0\leq i\leq d$, the \emph{upper polynomial dynamical degree}
\begin{equation}\label{eq:upper-mu}
\overline\mu_i(f)\coloneqq\limsup_{n\to\infty}\frac{\log\deg_{i,\omega}(f^n)-n\log\lambda_i(f)}{\log n}.
\end{equation}
The corresponding \emph{lower polynomial dynamical degree} $\underline\mu_i(f)$ is defined by replacing $\limsup$ with $\liminf$.
The comparison of degrees shows that both quantities are independent of the choice of $\omega$.
See \cite[Definition~1.11]{LB19} and \cite[\S3.1]{CPR21} for zero-entropy versions.
When $X=\bbP^d$ and $\omega=\sO(1)$, the invariant $\overline\mu_1(f)$ agrees with Silverman's exponent $\ell_f$; see \eqref{eq:Silverman-ell-f}.

If $f$ is regular in \cref{setting:projective}, or holomorphic in \cref{setting:kahler}, then pullback is functorial.
The comparison of degrees with the norm of $(f^*)^n$, together with Jordan normal form, shows that the limit exists, $\mu_i(f)\in\bZ_{\geq0}$, and
\[
\deg_{i,\omega}(f^n)\asymp n^{\mu_i(f)}\lambda_i(f)^n;
\]
see \cite[Lemma~4.2 and Proposition~5.3]{FFO21}.

More generally, for every maximal affine interval $[r,s]$ and every $r\leq i\leq s$, combining the lower bound \eqref{eq:standard-exponential-lower-bound} and \cref{thm:affine-interval-growth} yields
\[
0\leq\underline\mu_i(f)\leq\overline\mu_i(f)\leq 2(i-r)(s-i).
\]
It follows readily from \eqref{eq:projective-degree-log-concavity} and \eqref{eq:kahler-degree-log-concavity} that $\Delta\underline\mu_i(f)\coloneqq 2\underline\mu_i(f)-\underline\mu_{i-1}(f)-\underline\mu_{i+1}(f)\geq0$ for every $r<i<s$.
Also, similar to the proof of \cref{lemma:two-variable-maximum-principle}, the same discrete Phragm\'en--Lindel\"of argument shows that
\begin{align*}
\Delta\overline\mu_i(f)\coloneqq 2\overline\mu_i(f)-\overline\mu_{i-1}(f)-\overline\mu_{i+1}(f)\leq4,
\end{align*}
which extends \cite[Theorem~1.2(2)]{HJ-degree-growth} from regular or holomorphic maps to rational or meromorphic maps.
This leaves the following finer questions; cf.~\cite[Conjecture~2 and the subsequent paragraph]{Silverman14} and \cite[Question~13.16]{BIJMST19}.

\begin{question}\label{question:polynomial-dyn-deg}
Assume that either \cref{setting:projective} or \cref{setting:kahler} holds, and let $1\leq i\leq d-1$.

\begin{enumerate}[label=\textnormal{(\arabic*)}]
\item \emph{Existence.}
Does the limit
\[
\mu_i(f)\coloneqq\lim_{n\to\infty}\frac{\log\deg_{i,\omega}(f^n)-n\log\lambda_i(f)}{\log n}
\]
exist?

\item \emph{Two-sided growth.}
If the limit exists, does one have
\[
\deg_{i,\omega}(f^n)\asymp n^{\mu_i(f)}\lambda_i(f)^n?
\]

\item \emph{Integrality.}
If the limit exists, is $\mu_i(f)\in\bZ_{\geq0}$?
\end{enumerate}
\end{question}

\begin{remark}\label{rmk:polynomial-dyn-deg-known}
Beyond the functorial case just discussed, we record several classes for which all three parts of \cref{question:polynomial-dyn-deg} are known.

\begin{enumerate}[label=\textnormal{(\arabic*)}]
\item For \emph{birational} or \emph{bimeromorphic} self-maps of surfaces, the classification of Diller--Favre \cite{DF01} completely settles all three parts: $\mu_1(f)=0$ if $\lambda_1(f)>1$, and $\mu_1(f)\in\{0,1,2\}$ if $\lambda_1(f)=1$; see also \cite[Remark~1.2]{Oguiso16}.

\item Favre--Jonsson \cite[Theorem~C]{FJ11} proved that if $f$ is a dominant polynomial self-map of $\bC^2$ satisfying $\lambda_2(f)=\lambda_1(f)^2$, then either the polynomial degree $\deg(f^n)\asymp\lambda_1(f)^n$ or $\deg(f^n)\asymp n\lambda_1(f)^n$; in particular, $\mu_1(f)\in\{0,1\}$.

\item Monomial maps $f\colon\bbP^d\dashrightarrow\bbP^d$ induced by matrices $M\in\Mat_d(\bZ)$ with $\det M\neq0$ form another class for which all three parts have affirmative answers in every codimension; see \cite[Proposition~4.4]{Lin12b} or \cite[Theorem~A]{FW12}.

\item At every strict dynamical-degree break $\lambda_i(f)>s_i(f)$, \cref{thm:strict-break-asymptotics} shows that for every $s_i(f)<\eta<\lambda_i(f)$,
\[
\deg_{i,\omega}(f^n)=C_{f,\omega,i}\,\lambda_i(f)^n+O(\eta^n).
\]
Thus all three parts hold with $\mu_i(f)=0$.
The surface case was proved by Boucksom--Favre--Jonsson \cite[Main Theorem]{BFJ08}, and the codimension-one case in characteristic zero by Dang--Favre \cite[Theorem~1]{DF21}.
\end{enumerate}

Consequently, the remaining general problem is confined to resonant indices
\[
\lambda_i(f)^2=\lambda_{i-1}(f)\lambda_{i+1}(f).
\]
\end{remark}


\newcommand{\etalchar}[1]{$^{#1}$}
\providecommand{\bysame}{\leavevmode\hbox to3em{\hrulefill}\thinspace}
\raggedbottom

\end{document}